\renewcommand{\subset}{\subseteq}
\newtheorem{theorem}{Theorem}[section]
\newtheorem{lemma}[theorem]{Lemma}
\newtheorem{corollary}[theorem]{Corollary}
\newtheorem{proposition}[theorem]{Proposition}
\newcommand{\mb}[1]{\mathbf{#1}}
\newcommand{\mbb}[1]{\mathbb{#1}}
\newcommand{\wt}[1]{\widetilde{#1}}
\newcommand{\wh}[1]{\widehat{#1}}
\newcommand{\ppr}[5]
{\frac{{#1}_{(#2)}}{\prod_{i=1}^{#3}{({#4}_{i})}_{({#5}_{i})}}}
\newcommand{\copr}[4]{\prod_{i=1}^{#1}({#2}_{i}{#3}_{i})^{{#4}_{i}}}%
\newcommand{\bmom} [3] {[(1-#1)(1-#2)]^{#3}} % complement atoms in
\def\ifact#1#2{{#1}_{(#2)}}
\begin{document}
\begin{frontmatter}

\title{Orthogonal polynomial kernels and canonical correlations for Dirichlet measures}
\runtitle{Dirichlet polynomial kernels}

\begin{aug}
%%%% inicialai - be tarpu
\author[1]{\fnms{Robert C.} \snm{Griffiths}\thanksref{1}\ead[label=e1]{griff@stats.ox.ac.uk}}
\and
\author[2]{\fnms{Dario} \snm{Span\`{o}}\corref{}\thanksref{2}\ead[label=e2]{d.spano@warwick.ac.uk}}
\runauthor{R.C. Griffiths and D. Span\`{o}} %% auto
\address[1]{Department of Statistics, University of Oxford,
1 South Parks Road Oxford OX1 3TG, UK.\\
\printead{e1}}
\address[2]{Department of Statistics, University of Warwick,
Coventry CV4 7AL, UK.\\ \printead{e2}}
\end{aug}

% HISTORY:
\received{\smonth{1} \syear{2011}}
\revised{\smonth{9} \syear{2011}}

% ABSTRACT
%
\begin{abstract}
We consider a multivariate version of the so-called Lancaster problem
of characterizing canonical correlation coefficients of symmetric
bivariate distributions with identical marginals and orthogonal
polynomial expansions. The marginal distributions examined in this
paper are the Dirichlet and the Dirichlet multinomial distribution,
respectively, on the continuous and the $N$-discrete $d$-dimensional
simplex. Their infinite-dimensional limit distributions, respectively,
the Poisson--Dirichlet distribution and Ewens's sampling formula, are
considered as well. We study, in particular, the possibility of mapping
canonical correlations on the $d$-dimensional continuous simplex (i) to
canonical correlation sequences on the $d+1$-dimensional simplex and/or
(ii) to canonical correlations on the discrete simplex, and vice versa.
Driven by this motivation, the first half of the paper is devoted to
providing a full characterization and probabilistic interpretation of $
n$-orthogonal polynomial kernels (i.e., sums of products of orthogonal
polynomials of the same degree $n$) with respect to the mentioned
marginal distributions. We establish several identities and some
integral representations which are multivariate extensions of important
results known for the case $d=2$ since the 1970s. These results, along
with a common interpretation of the mentioned kernels in terms of
dependent P\'{o}lya urns, are shown to be key features leading to
several non-trivial solutions to Lancaster's problem, many of which can
be extended naturally to the limit as $d\rightarrow\infty.$
\end{abstract}

% KEYWORDS
%
\begin{keyword}
\kwd{canonical correlations}
\kwd{Dirichlet distribution}
\kwd{Dirichlet-multinomial distribution}
\kwd{Ewens's sampling formula}
\kwd{Hahn}
\kwd{Jacobi}
\kwd{Lancaster's problem}
\kwd{multivariate orthogonal polynomials}
\kwd{orthogonal polynomial kernels}
\kwd{Poisson--Dirichlet distribution}
\kwd{P\'{o}lya urns}
\kwd{positive-definite sequences}
\end{keyword}

\end{frontmatter}

%s1 ###
\section{Introduction}\label{sec:intro1}

Let $\pi$ be a probability measure on some Borel space
$(E,\mathcal{E})$ with $E\subseteq\mbb{R}.$ Consider an exchangeable
pair $(X,Y)$ of random variables with given marginal law $\pi.$
Modeling tractable joint distributions for $(X,Y)$, with $\pi$ as given
marginals, is a classical problem in mathematical statistics. One
possible approach, introduced by Henry Oliver Lancaster~\cite{L58} is
in terms of so-called \textit{canonical correlations}. Let $\{P_n\}
_{n=0}^{\infty}$ be a family of orthogonal polynomials with weight
measure $\pi$, that is, such that
\[
\mbb{E}_{\pi}(P_n(X)P_m(X))=\frac{1}{c_m}\delta_{nm},\qquad n,m\in\mbb{Z}_+
\]
for a sequence of positive constants $\{c_m\}.$ Here $\delta_{mn}=1$ if
$n=m$ and $0$ otherwise, and
$\mbb{E}_{\pi}$ denotes the expectation taken with respect to
$\pi.$

A sequence $\rho=\{\rho_n\}$ is the sequence of canonical correlation
coefficients for the pair $(X,Y)$, if it is possible to write the joint
law of $(X,Y)$ as
%
%
%e1.1 ###
\begin{equation}
g_{\rho}(\mathrm{d}x,\mathrm{d}y)=\pi(\mathrm{d}x)\pi(\mathrm{d}y)\Biggl\{\sum_{n=0}^{\infty}\rho
_nc_nP_n(x)P_n(y)\Biggr\},\label{grho}
\end{equation}
where $\rho_0=1.$ Suppose that the system $\{P_n\}$ is \textit{complete}
with respect to $L_2(\pi)$; that is, every function $f$ with finite
$\pi
$-variance admits a representation
%
%
%e1.2 ###
\begin{equation}
f(x)=\sum_{n=0}^{\infty}\widehat{f}(n)c_nP_n(x),
\label{riesz}
\end{equation}
where
%
%
%e1.3 ###
\begin{equation}
\widehat{f}(n)=\mbb{E}_{\pi}[f(X)P_n(X)],\qquad n=0,1,2,\ldots.
\end{equation}
Define the conditional expectation operator by
\[
T_{\rho} f(x):=\mathbb{E}\bigl(f(Y)| X=x\bigr).
\]

If $(X,Y)$ have canonical correlations $\{\rho_n\}$, then, for every
$f$ with finite variance,
\[
T_{\rho}f(x)=\sum_{n=0}^{\infty}\rho_n\wh{f}(n)c_nP_n(x).
\]
In particular,
\[
T_{\rho} P_n=\rho_nP_n,\qquad n=0,1,\ldots;
\]
that is, the polynomials $\{P_n\}$ are the eigenfunctions, and $\rho$
is the sequence of eigenvalues of $T_{\rho}.$
%In other words,
%$$\wh{T_{\rho}f}(n)=\rho_n\wh{f}(n),\ \ n=0,1,2,\ldots.$$ and
%$$\wh{T_{\rho}P_m}(n)=\delta_{mn}\rho_mP_m,\hspace{1.5cm}m,n=0,1,
Lancaster's problem is therefore a spectral problem, whereby
conditional expectation operators with given eigenfunctions are
uniquely characterized by their eigenvalues.
Because $T_{\rho}$ maps positive functions to positive functions, the
problem of identifying canonical correlation sequences~$\rho$ is
strictly related to the problem of characterizing so-called \textit{positive-definite sequences}.

In this paper we consider a multivariate version of Lancaster's
problem, when $\pi$ is taken to be either the Dirichlet or the
Dirichlet multinomial distribution (notation: $D_{\alpha}$ and
$\operatorname{DM}_{\alpha,N},$ with $\alpha\in\mbb{R}^{d}_{+}$ and $N\in\mbb{Z}_+$)
on the $(d-1)$-dimensional continuous and $N$-discrete simplex, respectively.
% $$\Delta_{(d-1)}:=\{x\in[0,1]^d:|x|=1\}$$
% and
% $$N\Delta_{(d-1)}:=\{m\in\mbb{Z}^d_+: m=N\}.$$
The eigenfunctions will be the multivariate Jacobi or Hahn
polynomials, respectively.
One difficulty arising when $d>2$ is that the orthogonal polynomials
$P_{\mb{n}}=P_{n_1n_2\cdots n_d}$ are multi-indexed. The degree of
every polynomial $P_{\mb{n}}$ is
$|\mb{n}|:=n_1+\cdots+n_d$ (throughout the paper, for every vector
$\mb
{x}=(x_1,\ldots,x_d)\in\mbb{R}^d$, we will denote its length by
$|\mb
{x}|$). There are
\[
\pmatrix{ n + d -1\vspace*{2pt}\cr d-1}
\]
polynomials with degree $ n,$ so, when $d>2$, there is no unique way to
introduce a total order in the space of all
polynomials. \textit{Orthogonal polynomial kernels} are instead uniquely
defined and totally ordered.

By orthogonal polynomial kernels of degree $ n$, with respect to $\pi$,
we mean functions of the form
%
%
%e1.4 ###
\begin{equation}
P_{ n}(\mb{x},\mb{y})=\sum_{\mb{m}\in\mbb{Z}_+^d:|\mb{m}|=
n}c_{\mb
{m}}P_{\mb{m}}(\mb{x})P_{\mb{m}}(\mb{y}),\qquad
n=0,1,2,\ldots,
\label{gker}
\end{equation}
where $\{P_{\mb{n}}\dvt \mb{n}\in\mbb{Z}_+^d\}$ is a system of orthogonal
polynomials with weight measure $\pi.$

It is easy to check that
\[
\mbb{E}_{\pi}[P_{ n}(\mb{x},\mb{Y})P_{ m}(\mb{z},\mb{Y})
]=P_{ n}(\mb{x},\mb{z})\delta_{ m n}.
\]

A representation equivalent to (\ref{riesz}) in term of polynomial
kernels is
%
%
%e1.5 ###
\begin{equation}
f(\mb{x})=\sum_{ n=0}^{\infty}\mbb{E}_{\pi}(f(\mb{Y})P_{ n}(\mb
{x},\mb{Y})).
\label{rieszk}
\end{equation}
If $f$ is a polynomial of order $ m$, the series terminates at
$ m$. Consequently, for general $d\geq2$, the individual
orthogonal polynomials $P_{\mb{n}}(\mb{x})$ are uniquely determined
by their
leading coefficients of degree $ n$ and $P_{ n}(\mb{x},\mb{y})$. If a
leading term is
\[
\sum_{\{\mb{k}:|\mb{k}|= n\}}b_{{\mb{n}}{\mb{k}}}\prod_1^\mathrm{d}x_i^{k_i},
\]
then
%
%
%e1.6 ###
\begin{equation}
P_{\mb{n}}(\mb{x}) =
\sum_{\{\mb{k}:|\mb{k}|= n\}}b_{{\mb{n}}{\mb{k}}}\mbb{E}\Biggl[\prod
_1^dY_i^{k_i}P_{ n}(\mb{x},\mb{Y})
\Biggr], \label{leadingQ}
\end{equation}
where $Y$ has distribution $\pi$.

$P_{ n}(\mb{x},\mb{y})$ also has an expansion in terms of any
complete sets of biorthogonal polynomials of
degree $ n$. That is, if $\{P_{\mb{n}}^\diamond(\mb{x})\}$ and
$\{P_{\mb{n}}^\circ(\mb{x})\}$ are polynomials orthogonal to
polynomials of degree less
that $ n$ and
\[
\mbb{E}[P^\diamond_{\mb{n}}(X)P^\circ_{\mb{n}^\prime} (X)] =
\delta_{\mb{n}\mb{n}^\prime},
\]
then
%
%
%e1.7 ###
\begin{equation}
P_{ n}(\mb{x},\mb{y}) = \sum_{\{\mb{n}:|\mb{n}|=n\}}P^\diamond
_{\mb
{n}}(\mb{x})P^\circ_{\mb{n}}(\mb{y}).
\end{equation}
%
%Biorthogonal polynomials always have expansions
%P_{\mb{n}}^\diamond(\mb{x}) &= & \sum_{\{m: m= n\}}c^
%where the matrices of coefficients satisfy
%{C^\diamond}^T C^\circ= I,\text{~equivalent~to~} C^\circ= {{C^
Similar expressions to (\ref{leadingQ}) hold for $P_{\mb{n}}^\diamond
(\mb{x})$ and $P_{\mb{n}}^\circ(\mb{x})$, using their respective
leading coefficients. This can be
shown by using their expansions in an orthonormal polynomial set and
applying~(\ref{leadingQ}).

The polynomial kernels with respect to $D_\alpha$ and $\operatorname{DM}_{\alpha,N}$
will be denoted by $Q_{ n}^\alpha(\mb{x},\mb{y})$ and $H_{ n}^\alpha
(\mb
{r},\mb{s}),$ and called Jacobi and Hahn kernels, respectively.

This paper is divided in two parts. The goal of the first
part is to describe Jacobi and Hahn kernels under a unified view: we
will first provide a probabilistic description of their structure and
mutual relationship, then we will investigate their symmetrized and
infinite-dimensional versions.

We will show that all the kernels
under study can be constructed via systems of bivariate P\'{o}lya urns
with random samples in common. This remarkable property assimilates the
Dirichlet ``world'' to other distributions, within the
so-called Meixner class, whose orthogonal polynomial kernels admit a
representation in terms of bivariate sums with random elements in
common, a~fact known since the 1960s (see~\cite{E64,EL67}. See also
\cite{DKSC08} for a modern Bayesian approach).

In the second part of the paper we will turn our
attention to the problem of identifying
canonical correlation sequences with respect to $D_{\alpha}$
and $\operatorname{DM}_{\alpha,N}$.
We will restrict our focus on sequences~$\rho$ such that, for every
$\mb{n}\in\mbb{Z}_+^d,$ $\rho_{\mb{n}}$ depends on $\mb{n}$ only
through its total length $|\mb{n}|=\sum_{i=1}^dn_i$:
\[
\rho_{\mb{n}}=\rho_{ n}    \qquad   \forall\mb{n}\in\mbb
{Z}_+^d\dvt |\mb{n}|=n.
\]
For these sequences, Jacobi or Hahn polynomial kernels will be used to
find out conditions for a sequence $\{\rho_{ n}\}$ to satisfy the inequality
%
%
%e1.8 ###
\begin{equation}\sum_{ n=0}^{\infty}\rho_{ n}P_{ n}(\mb{u},\mb
{v})\label
{cop2}\geq0.
\end{equation}
Since $T_{\rho}$ is required to map constant functions to constant
functions, a straightforward necessary condition is always that
\[
\rho_{0}=1.
\]
For every $d=2,3,\ldots$ and every $\alpha\in\mbb{R}_+^d$, we will call
any solution to (\ref{cop2}) an \textit{$\alpha$-Jacobi positive-definite
sequence \textup{(}$\alpha$-JPDS\textup{)}}, if $\pi=D_\alpha$, and an $(\alpha
,N)$-\textit{Hahn positive-definite sequence} ($(\alpha,N)$-\textit{HPDS}), if $\pi
=\operatorname{DM}_{\alpha,N}$.

We are interested, in particular, in studying if and when one or both
the following statements are true.
\begin{enumerate}[(P1)]
\item[(P1)] For every $d$ and $\alpha\in\mbb{R}_+^d$, $\rho$ is
$\alpha
$-JPDS $\Leftrightarrow \rho$ is $\wt{\alpha}$-JPDS for every
$\wt
{\alpha}\in\mbb{R}^{d+1}_+\dvt |\wt{\alpha}|=|\alpha|;$

\item[(P2)] For every $d$ and $\alpha\in\mbb{R}_+^d$ $\rho$ is
$\alpha
$-JPDS $\Leftrightarrow \rho$ is $(\alpha,N)$-HPDS for some $N$.
\end{enumerate}
Regarding (P1), it will be clear in Section~\ref{sec:pdandk} that the
sufficiency part ($\Leftarrow$) always holds. To find conditions for
the necessity part ($\Rightarrow$) of (P1), we will use two alternative
approaches. The first one is based on a multivariate extension of a
powerful product formula for the Jacobi polynomials, due to Koornwinder
and finalized by Gasper in the early 1970s: for $\alpha,\beta$ in a
``certain region'' (see Theorem~\ref{th:gasper} further on), the
integral representation
\[\frac{P^{\alpha,\beta}_n(x)}{P^{\alpha,\beta
}_n(1)}\frac{P^{\alpha,\beta}_n(y)}{P^{\alpha,\beta}_n(1)}=
\int_0^1
\frac{P^{\alpha,\beta}_{n}(z)}{P^{\alpha,\beta
}_n(1)}m_{x,y}(\mathrm{d}z),\qquad x,y\in(0,1),n\in\mbb{N},\label{pf}
\]
holds for a probability measure $m_{x,y}$ on [0, 1]. Our extension for
multivariate polynomial kernels, of non-easy derivation, is found in
Proposition~\ref{PROP:1} to be
%
%
%e1.9 ###
\begin{equation}
{Q_{ n}^{\alpha}(\mb{x},\mb{y})}=\mbb{E}\bigl[{Q^{\alpha_d, |\alpha
|-\alpha_d}_{ n}(Z_d,1)} |  \mb{x},\mb{y}\bigr],\qquad  |n|=0,1,\ldots
\label{protoprop}
\end{equation}
for every $d\in\mbb{N},$ every $\alpha$ in a ``certain
region'' of $\mbb{R}_+^d,$ and for a particular $[0,1]$-valued random
variable $Z_d$. Here, for every $j=1,\ldots,d,$ $\mb{e}_j=(0,0,\ldots
,1,0,\ldots,0)\in\mbb{R}^d$ is the vector with all zero components,
except for the $j$th coordinate which is equal to 1.
Integral representations such as (\ref{protoprop}) are useful in that
they map immediately univariate positive functions to the type of
bivariate distribution we are looking for,
\[
f(\mb{x})\geq0 \quad\Longrightarrow\quad\sum_{ n}\wh{f}(n)Q_{ n}(\mb{x},\mb{y})=
\mbb{E}[f(Z_d) | \mb{x},\mb{y}]\geq0.
\]
In fact, whenever (\ref{protoprop}) holds true, we will be able to
conclude that (P1) is true.

Identity (\ref{protoprop}), however, holds only with particular choices
of the parameter $\alpha$. At best, one needs one of the $\alpha_j$s to
be greater than 2. This makes it hard to use (P1) to build, in the
limit as $d\to\infty,$ canonical correlations with respect to
Poisson--Dirichlet marginals on the infinite simplex. The latter would
be a desirable aspect for modeling dependent measures on the infinite
symmetric group or for applications, for example, in nonparametric
Bayesian statistics.

On the other hand, there are several examples in the literature of
positive-definite sequences satisfying (P1) for every choice of $\alpha
,$ even in the limit case of $|\alpha|=0$. Two notable and well-known
instances are
\begin{enumerate}[(ii)]
\item[(i)]
%
%
%e1.10 ###
\begin{equation}\rho_{ n}(t)=\mathrm{e}^{-({1}/{2}) n( n+|\alpha
|-1)t},\qquad  n=0,1,\ldots,\label{geneig}
\end{equation}
arising as the eigenvalues of the transition semigroup of the so-called
\textit{$d$-type, neutral Wright--Fisher diffusion process in population
genetics; see, for example},~\mbox{\cite{W84,G79a,GS10}}. The generator of
the diffusion process $\{\mb{X}(t), t \geq0\}$ describing the relative
frequencies of genes with type space $\{1,\ldots,d\}$ is
\[
\mathcal{L} = \frac{1}{2}\sum_{i=1}^d\sum_{j=1}^\mathrm{d}x_i(\delta_{ij}-x_j)
\frac{\partial^2}{\partial x_i\,\partial x_j}
+ \frac{1}{2} \sum_{i=1}^d(\alpha_i - |\alpha|x_i)\frac{\partial
}{\partial x_i}.
\]
In this model, mutation is parent-independent from type $i$ to $j$ at
rate $\alpha_j/2$, $j \in\{1,\ldots,d\}$. Assuming that $\alpha>0$,
the stationary distribution of the process is ${ D}_\alpha$, and the
transition density has an expansion
\[
f(\mb{x},\mb{y};t) = { D}_\alpha(y)\Biggl\{
1 + \sum_{n=1}^\infty\rho_n(t)Q_n^\alpha(\mb{x},\mb{y})\Biggr\}.
\]
The limit model as $d\to\infty$ with $\alpha= |\alpha|/d$ is the
infinitely-many-alleles-model, where mutation is always to a novel
type. The stationary distribution is $\operatorname{Poisson\mbox{--}Dirichlet} (\alpha)$.

The same sequence (\ref{geneig}) is also a HPDS playing a role in
population genetics~\cite{KMG75}: it is the eigenvalue sequence of the
so-called Moran model with type space $\{1,\ldots,d\}$. In a population
of $N$ individuals, $\{\mb{Z}(t),t \geq0\}$ denotes the number of
individuals of each type at $t$, $|\mb{Z}(t)|=N$. In reproduction
events, an individual is chosen at random to reproduce with one child,
and another is chosen at random to die. The offspring of a parent of
type~$i$ does not mutate with probability $1-\mu$, or mutates in a
parent independent way to type~$j$ with probability $\mu p_j$, $j\in\{
1,\ldots,d\}$, where $|\mb{p}| = 1$.
The generator of the process is described by
\[
\mathcal{L}f(\mb{z}) = \sum_{i=1}^d\sum_{j=1}^d z_i\biggl(\frac{\lambda
}{N}z_j+\mu p_j \biggr) [f(\mb{z}-\mb{e}_i+\mb{e}_j) - f(\mb
{z})] .
\]
Setting $\alpha= M\mu\mb{p}/\lambda$, $\lambda= N/2$, the stationary
distribution of the process is ${ \operatorname{DM}}_{\alpha,N}$, the eigenvalues are
(\ref{geneig}), and the transition density is
\[
P\bigl(Z(t)=s\vert Z(0)=r\bigr)
= { \operatorname{DM}}_{\alpha,N}(s)\Biggl\{1 + \sum_{n=1}^N\rho_n(t)H_n(r,s)\Biggr\}.
\]
Thus (\ref{geneig}) is an example of positive-definite sequence
satisfying both (P1) and (P2).
\item[(ii)]
\[
\rho_{ n}(z)=z^{ n},\qquad  n=0,1,\ldots;
\]
that is, the eigenvalues of the so-called Poisson kernel, whose
positivity is a well-known result in special functions theory (see,
e.g.,~\cite{I05,DX02}).
\end{enumerate}
An interpretation of Poisson kernels as Markov transition semigroups is
in~\cite{GS10}, where it is shown that (ii) can be obtained via an
appropriate subordination of the genetic model~(i).

It is therefore natural to ask when (P1) holds with no constraints on
the parameter $\alpha.$

Our second approach to Lancaster's problem will answer, in part, this
question. This approach is heavily based on the probabilistic
interpretation (P\'{o}lya urns with random draws in common) of the
Jacobi and Hahn polynomial kernels shown in the first part of the
paper. We will prove in Proposition~\ref{pr:gspd} that, if $\{d_{
m}\dvt m=0,1,2,\ldots\}$ is a probability mass function (p.m.f.) on $\mbb
{Z}_+,$ then every positive-definite sequence $\{\rho_{ n}\}_{
n=0}^{\infty}$ of the form
%
%
%e1.11 ###
\begin{equation}
\rho_{ n}=\sum_{ m= n}^{\infty}\frac{ m!\Gamma(|\alpha|+ m)}{( m-
n)!\Gamma(|\alpha|+ m+ n)}d_{ m},\qquad
  m=0,1,\ldots,\label{d2rhoproto}
\end{equation}
satisfies (P1) for every choice of $\alpha;$ therefore (P1) can be used
to model canonical correlations with respect to the Poisson--Dirichlet
distribution.

In Section~\ref{sec:pd2prob} we investigate the possibility of a
converse result, that is, will find a set of conditions on a JPD
sequence $\rho$ to be of the form (\ref{d2rhoproto}) for a p.m.f. $\{d_{
m}\}.$

As for Hahn positive-definite sequences and (P2), our results will be
mostly a consequence of Proposition~\ref{pr:KDM}, where we establish
the following representation of Hahn kernels as mixtures of Jacobi kernels:
\[
H^{\alpha}_{ n}(\mb{r},\mb{s})=\frac{{( N- n)!\Gamma(|\alpha|+ N+ n)}}{
N!\Gamma(|\alpha|+ N)}\mbb{E}[Q_{ n}^{\alpha}(\mb{X},\mb{Y}) |
\mb
{r},\mb{s}] ,\qquad  n=0,1,\ldots
\]
for every $N\in\mbb{Z}_+$ and $\mb{r},\mb{s}\in N\Delta_{(d-1)},$ where
the expectation on the right-hand side is taken with respect to
$D_{\alpha+\mb{r}}\otimes D_{\alpha+\mb{s}}$, that is, a product of
\textit{posterior} Dirichlet probability measures. A similar result was
proven by~\cite{GS08} to hold for individual Hahn polynomials as well.
The interpretation is again in terms of dependent P\'{o}lya sequences
with random elements in common.

We will also show (Proposition~\ref{prp:hintf}) that a discrete version
of (\ref{protoprop}) (but with the appearance of an extra coefficient)
holds for Hahn polynomial kernels.

Based on these findings, we will be able to prove in Section \ref
{sec:hpds} some results ``close to'' (P2): that JPDSs can
indeed be viewed as a map from HPDSs, and vice versa, but such
mappings, in general, are not the inverse of each other.

On the other hand, we will show (Proposition~\ref{bernpds}) that every
JPDS is in fact the limit of a sequence of (P2)-positive-definite
sequences.

Our final result on HPDSs is in Proposition~\ref{prp:p2}, where we
prove that if, for fixed $N$, $d^{(N)}=\{d^{(N)}_{ m}\}_{ m\in\mbb
{Z}_+}$ is a probability distribution such that $d^{(N)}_l=0$ for
$l>N$, then (P2) holds properly for the JPDS $\rho$ of the form (\ref
{d2rhoproto}). Such sequences also satisfy (P1) and admit
infinite-dimensional Poisson--Dirichlet (and Ewens's sampling
distribution) limits.

The key for the proof of Proposition~\ref{prp:p2} is provided by
Proposition~\ref{vdmk}, where we show the connection between our
representation of Hahn kernels and a kernel generalization of a product
formula for Hahn polynomials, proved by Gasper~\cite{GAS73} in 1973.
Proposition~\ref{vdmk} is, in our opinion, of some interest, even
independently of its application.

%s1.1 ###
\subsection{Outline of the paper}\label{sec1.1}
The paper is organized as follows. Section~\ref{sec:distn} will
conclude this Introduction by recalling some basic properties and
definitions of the probability distribution we are going to deal with.
In Section
\ref{sec:k1} an explicit description of
$Q_{ n}^{\alpha}$ is given in terms of mixtures of products of
multinomial probability distributions arising from dependent P\'{o}lya
urns with random elements in common.
%Such polynomials turn out to be the same as the ones derived by
%Koornwinder in the early 1970s as eigenfunctions of a certain sum of
%algebraically independent differential operators.
We will next obtain (Section~\ref{sec:k2}) an explicit representation for
$H^{\alpha}_{ n}$ as \textit{posterior mixtures} of
$Q_{ n}^{\alpha}$. %This is done by applying
%an analogous relationship which was proved in~\cite{GS08} to hold
%for individual orthogonal polynomials.
In the same section we will generalize Gasper's product formula to an
alternative representation of $H^\alpha_{ n}$ and will describe the
connection coefficients in the two representations.
In Sections~\ref{sec:kPD}--\ref{sec:esfk}, we will then show that
similar structure and probabilistic descriptions also hold for kernels
with respect to the
ranked versions of $D_{\alpha}$ and $\operatorname{DM}_{\alpha,N}$, and to
their infinite-dimensional limits, known as the Poisson--Dirichlet
and Ewens's sampling distribution, respectively. %As an immediate
%application, symmetrized Jacobi kernels will be used in section
%orthogonal polynomials with respect to the \textit{ranked} Dirichlet
%distribution.
This will conclude the first part.

Sections~\ref{sec:irjpk}--\ref{sec:irhpk} will be the bridge between
the first and the second part of the paper. We will prove identity
(\ref
{protoprop}) for the Jacobi product formula and its Hahn equivalent. We
will point out the connection between (\ref{protoprop}) and another
multivariate Jacobi product formula due to Koornwinder and Schwartz
\cite{KS91}.

In Section~\ref{sec:pdandk} we will focus more closely on
positive-definite sequences (canonical correlations). We will use
results of Section~\ref{sec:irjpk} (first approach) to characterize
sequences obeying to (P1), with constraints on $\alpha.$

In Section
\ref{sec:prob} we will use a second probabilistic approach to find
sufficient conditions for (P1) to hold with no constraints on the
parameters, when a JPDS can be expressed as a linear functional of a
probability distribution on $\mbb{Z}_+.$ Every such sequence will be
determined by a probability mass function on the integers. We will
discuss the possibility of a converse mapping from JPDSs to probability
mass functions in Section~\ref{sec:pd2prob}.

In the remaining sections we will investigate the existence of
sequences satisfying (P2). In particular, in Section~\ref{sec:p2} we
will make a similar use of probability mass functions to find
sufficient conditions for a proper version of (P2).

%s1.2 ###
\subsection{Elements from distribution theory}\label{sec:distn}
We briefly list the main definitions and properties of the probability
distributions that will be used in the paper. We also refer to \cite
{GS08} for further properties and related distributions. For $\alpha
,\mb
{x}\in\mbb{R}^d$ and $\mb{n}\in\mbb{Z}_+^d,$ denote
\[
\mb{x}^\alpha=x_1^{\alpha_1}\cdots x_d^{\alpha_d}, \qquad\Gamma
(\alpha)=\prod_{i=1}^{d}\Gamma(\alpha_i)
\]
and
\[
\pmatrix{{ n}\cr\mb{n}}=\frac{ n!}{\prod_{i=1}^dn_i!}.
\]
Also, we will use
\begin{eqnarray*}
(\mb{a})_{(\mb{x})}&=&\frac{\Gamma(\mb{a}+\mb{x})}{\Gamma(\mb
{a})},\\
(\mb{a})_{[\mb{x}]}&=&\frac{\Gamma(\mb{a}+\underline{1})}{\Gamma
(\mb
{a}+\underline{1}-\mb{x})},
\end{eqnarray*}
whenever the ratios are well defined. Here $\underline{1}:=(1,1,\ldots,1).$\vadjust{\goodbreak}

If $x\in\mbb{Z}_+$, then $(a)_{(x)}=a(a+1)\cdots(a+x-1)$ and
$(a)_{[x]}=a(a-1)\cdots(a-x+1).$ $\mbb{E}_{\mu}$ will denote the
expectation under the probability distribution $\mu. $ The subscript
will be omitted when there is no risk of confusion.
\begin{definition}
\begin{enumerate}[(ii)]
\item[(i)] The $\operatorname{Dirichlet} (\alpha)$ distribution, $\alpha\in\mbb
{R}_+^d$, on the $d$-dimensional simplex
\[
\Delta_{(d-1)}:=\{\mb{x}\in[0,1]^d\dvt |\mb{x}|=1\}
\]

is given by
\[
D_{\alpha}(\mathrm{d}\mb{x}):=\frac{\Gamma(|\alpha|)\mb{x}^{\alpha
-\underline
{1}}}{\Gamma(\alpha)}\mbb{I}\bigl(\mb{x}\in\Delta_{(d-1)}\bigr)\,\mathrm{d}\mb{x}.
\]
\item[(ii)] The Dirichlet multinomial $(\alpha,N)$ distribution,
$\alpha
\in\mbb{R}_+^d,N\in\mbb{Z}_+$ on the $(d-1)$-dimensional discrete simplex
\[
N\Delta_{(d-1)}:=\{\mb{m}\in\mbb{Z}^d_+\dvt |\mb{m}|=N\}
\]
is given by the probability mass function
%
%
%e1.12 ###
\begin{equation}
\operatorname{DM}_{\alpha,N}(\mb{r})=\pmatrix{{N}\cr\mb{r} }\frac{(\alpha)_{(\mb
{r})}}{(|\alpha|)_{(N)}},\qquad \mb{r}\in N\Delta
_{(d-1)}.\label{dm2}
\end{equation}
\end{enumerate}
\end{definition}
%
%s1.2.1 ###
\subsubsection{P\'{o}lya sampling distribution}
$\operatorname{DM}_{\alpha,N}$ can be thought as the moment formula (sampling
distribution) of $D_\alpha$,
\[
\mbb{E}_{D_{\alpha}}\left[\pmatrix{{N}\cr\mb{r} }\mb{X}^{\mb{r}}\right],
\]
so $\operatorname{DM}_{\alpha,N}$ can be interpreted as the probability distribution
of a sample of $N$ random variables in $\{1,\ldots,d\}$, which are
conditionally independent and identically distributed with common law~$\mb{X}$, the latter being a random distribution with distribution
$D_\alpha.$ The probability distribution of~$\mb{X}$, conditional on a
sample of $N$ such individuals, is, by Bayes's theorem, again Dirichlet
with different parameters
%
%
%e1.13 ###
\begin{equation}
D_{\alpha+\mb{r}}(\mathrm{d}\mb{x})=\frac{{{N}\choose\mb{r} }\mb{x}^{\mb
{r}}}{\operatorname{DM}_{\alpha,N}(\mb{r})}D_\alpha(\mathrm{d}\mb{x}).
\label{posdir}
\end{equation}
As $N\to\infty,$ the measure $\operatorname{DM}_{\alpha,N}$ tends to $D_\alpha.$
% $$\operatorname{DM}_{\alpha,N}(\mb{r})\to D_\alpha(\mathrm{d}\mb{x}).$$
The Dirichlet multinomial distribution can also be thought as the
distribution of color frequencies arising in a sample of size $N$ from
a $d$-color P\'{o}lya urn. This sampling scheme can be described as
follows: in an urn there are $|\alpha|$ balls of which $\alpha_i$ are
of color $i,i=1,\ldots,d$ (for this interpretation one may assume,
without loss of generality, that $\alpha\in\mbb{Z}_+^d$). Pick a ball
uniformly at random, note its color, then return the ball in the urn
and add another ball of the same color. The probability of the first
sample to be of\vadjust{\goodbreak} color $i$ is $\alpha_i/|\alpha|.$ After simple
combinatorics one sees that the distribution of the color frequencies
after~$M$ draws is $\operatorname{DM}_{\alpha,M}$. Conditional on having observed
$\mb
{r}$ as frequencies in the first $M$ draws, the probability
distribution of observing $\mb{s}$ in the next $N-M$ draws is
%
%
%e1.14 ###
\begin{equation}
\operatorname{DM}_{\alpha+\mb{r},N-M}(\mb{s})=D_{\alpha,N-M}(\mb{s})\frac
{D_{\alpha+\mb
{s},M}(\mb{r})}{D_{\alpha,M}(\mb{r})}.\label{posdm}
\end{equation}
%
%s1.2.2 ###
\subsubsection{Ranked frequencies and limit distributions}
\noindent Define the \textit{ranking function} $\psi\dvtx\mbb
{R}^{d}\rightarrow\mbb{R}^d$ as the function reordering the elements of
any vector $\mb{y}\in\mbb{R}^d$ in decreasing order. Denote its
image by
\[
\psi(\mb{y})=\mb{y}^{\downarrow}=(y^{\downarrow}_1,\ldots
,y^{\downarrow}_d).
\]
The ranked continuous and discrete simplex will be denoted by $\Delta
_{d-1}^{\downarrow}=\psi(\Delta_{d-1})$ and $ N\Delta
_{d-1}^{\downarrow
}=\psi(N\Delta_{d-1}),$ respectively.
\begin{definition}
The \textup{Ranked Dirichlet} distribution with parameter $\alpha\in
\mbb
{R}_+^d,$ is one with density, with respect to the $d$-dimensional
Lebesgue measure
\[
D^{\downarrow}_{\alpha}(\mb{x}):=D_{\alpha}\circ\psi^{-1}(\mb
{x}^\downarrow)=\frac{1}{d!}\sum_{\sigma\in S_d}D_{\alpha}(\sigma
\mb
{x}^{\downarrow}),\qquad   \mb{x}\in\Delta_{d-1}^\downarrow,
\label{rdir}
\]
where $S_d$ is the group of all permutations on $\{1,\ldots,d\}$ and
$\sigma\mb{x}=(x_{\sigma(1)},\ldots,x_{\sigma(d)}).$

Similarly,
\[
\operatorname{DM}_{\alpha,N}^{\downarrow}:=\operatorname{DM}_{\alpha,N}\circ\psi^{-1}
\]
defines the \textup{ranked Dirichlet multinomial} distribution.
\end{definition}

With a slight abuse of notation, we will use $D^{\downarrow}$
to indicate both the ranked Dirichlet measure and its density. Ranked
symmetric Dirichlet and Dirichlet multinomial measures can be
interpreted as distributions on random partitions.
For every $\mb{r}\in N\Delta_{(d-1)}$ let $\beta_j=\beta_j(\mb
{r})$ be
the number of elements in $\mb{r}$ equal to $j$ and $k(\mb{r})=\sum
\beta_j(\mb{r}) $ the number of strictly positive components of $\mb
{r}.$ Thus $\sum_{i=1}^{r}i\beta_i(\mb{r})=N.$

For each $\mb{x}\in\Delta_{(d-1)}$ denote the monomial symmetric
polynomials by
\[
[\mb{x},\mb{r}]_d:=\sum_{
i_1\neq\cdots\neq i_k\in\{1,\ldots,d\}^k}\prod_{j=1}^{k}x_{i_j}^{r_j},
\]
where the sum is over all $d_{[k]}$ subsequences of $k$ distinct
integers, and let $[\mb{x},\mb{r}]$ be its extension to $\mb{x}\in
\Delta_{\infty}.$
Take a collection $(\xi_1,\ldots,\xi_{N})$ of independent, identically
distributed random variables, with values in a space of $d$ ``colors'' $(d\leq\infty)$, and assume that $x_j$ is the common
probability of any $\xi_i$ of being of color $j$. The function $[\mb
{x},\mb{r}]_d$ can be interpreted as the probability distribution of
any such sample realization giving rise to $k(\mb{r})$ distinct values
whose \textit{unordered} frequencies count $\beta_1(\mb{r})$ singletons,
$\beta_2(\mb{r})$ doubletons and so on.

There is a bijection between $\mb{r}^{\downarrow}=\psi(r)$ and
$\beta
(\mb{r})=(\beta_1(\mb{r}),\ldots,\beta_{N}(\mb{r})),$ both maximal
invariant functions with respect to permutations of coordinates, both
representing partitions of $N$ in $k(\mb{r})$ parts. Note that $[\mb
{x},\mb{r}]_d$ is invariant too, for every $d\leq\infty$. It is well
known that, for every $\mb{x}\in\Delta_{d}^{\downarrow},$
%
%
%e1.15 ###
\begin{equation}
\sum_{\mb{r}^{\downarrow}\in N\Delta^{\downarrow
}_{(d-1)}}\pmatrix{N\cr{\mb
{r}^{\downarrow}}}\frac{1}{\prod_{i\geq1}\beta_{i}(\mb
{r}^{\downarrow
})!}[x,\mb{r}^{\downarrow}]_d=1,
\label{sym}
\end{equation}
that is, for every $\mb{x}$,
\[
\pmatrix{N\cr{\mb{r}^{\downarrow}}}\frac{1}{\prod_{i\geq1}\beta
_{i}(\mb
{r}^{\downarrow})!}[\mb{x},\mb{r}^{\downarrow}]_d
\]
represents a probability distribution on the space of random partitions
of $N.$

For $|\alpha|>0,$ let $D_{|\alpha|,d}, \operatorname{DM}_{|\alpha|,N,d}$ denote the
Dirichlet and Dirichlet multinomial distributions with symmetric
parameter $(|\alpha|/d,\ldots,|\alpha|/d).$
Then
%
%
%e1.16 ###
\begin{eqnarray}
\label{formrdm}&&\operatorname{DM}^{\downarrow}_{|\alpha|,N,d}(\mb{r}^{\downarrow})\nonumber\\
&&\quad=\mbb
{E}_{D^{\downarrow}_{|\alpha|,d}}\biggl\{\pmatrix{N\cr{\mb{r}^{\downarrow
}}}\frac{1}{\prod_{i\geq1}\beta_{i}(\mb{r}^{\downarrow})!}[\mb
{x}^{\downarrow},\mb{r}^{\downarrow}]_d\biggr\}
\\
&&\quad= d_{[k]}
\frac{r!}{\prod_1^N j!^{\beta_j}\beta_j!}
\cdot
\frac{\prod_1^r (|\alpha|/d)^{\beta_j}_{(j)}}
{(|\alpha|)_{(N)}}
\nonumber\\
&&\quad \displaystyle\mathop{\longrightarrow}_{d\rightarrow\infty}
\frac{r!}{\prod_1^r j^{\beta_j}\beta_j!}
\cdot
\frac{|\alpha|^{k}}
{(|\alpha|)_{( r)}}
:=\operatorname{ESF}_{|\alpha|}(\mb{r})
\label{esf}.
\end{eqnarray}

\begin{definition}
The limit distribution $\operatorname{ESF}_{|\alpha|}(\mb{r})$ in (\ref{esf}) is
called the \textit{Ewens sampling formula} with parameter $|\alpha|.$
\end{definition}

\paragraph*{\texorpdfstring{Poisson--Dirichlet point process \protect{\cite{K75}}}{Poisson--Dirichlet point process [19]}} Let
$Y^{\infty}=(Y_1,Y_2,\ldots)$ be the sequence of points of a
non-homogeneous point process with intensity measure
\[
N_{|\alpha|}(y)= |\alpha|y^{-1}\mathrm{e}^{-y}.
\]
The probability generating functional is
%
%
%e1.17 ###
\begin{eqnarray}\label{pdmgf}
\mathcal{F}_{|\alpha|}(\xi)&=&\mbb{E}_{|\alpha|}\biggl(\exp\biggl\{\int\log
\xi(y)N_{|\alpha|}(\mathrm{d}y)\biggr\}\biggr)
\nonumber
\\[-8pt]
\\[-8pt]
\nonumber
&=&\exp\biggl\{|\alpha|\int
_{0}^{\infty}\bigl(\xi(y)-1\bigr)y^{-1}\mathrm{e}^{-y}\,\mathrm{d}y\biggr\}
\end{eqnarray}
for suitable functions $\xi\dvtx\mbb{R}\rightarrow[0,1].$ Then
$|Y^{\infty
}|$ is a $\operatorname{Gamma}(|\alpha|)$ random variable and is independent of the
sequence of ranked, normalized points
\[
X^{\downarrow\infty}=\frac{\psi(Y^{\infty})}{|Y^{\infty}|}.
\]

\begin{definition}
The distribution of $X^{\downarrow\infty},$ is called the
\textit{Poisson--Dirichlet} distribution with parameter $|\alpha|.$
\end{definition}

\begin{proposition}
\begin{enumerate}[(ii)]
\item[(i)] The $\operatorname{Poisson\mbox{--}Dirichlet} (|\alpha|)$ distribution on
$\Delta_{\infty}$ is the limit
\[
\operatorname{PD}_{|\alpha|}=\lim_{d\rightarrow\infty}D^{\downarrow}_{|\alpha|,d}.
\]
\item[(ii)]The relationship between $D_{\alpha}$ and $\operatorname{DM}_{\alpha,N}$ is
replicated by ESF, which arises as the (symmetric) moment formula for
the PD distribution,
%
%
%e1.18 ###
\begin{equation}
\operatorname{ESF}_{|\alpha|,N}(r)=\mbb{E}_{\operatorname{PD}_{|\alpha|}}\biggl\{\pmatrix { r\cr{\mb
{r}^{\downarrow}}}\frac{1}{\prod_{i\geq1}\beta_{i}(\mb
{r}^{\downarrow
})!}[\mb{x},\mb{r}^{\downarrow}]\biggr\}, \qquad r\in N\Delta^{\downarrow}.
\label{esfmom}
\end{equation}
\end{enumerate}
\end{proposition}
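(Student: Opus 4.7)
For part (i), the natural route is via the subordinator construction of $PD_{|\alpha|}$. Let $G_1^{(d)},\ldots,G_d^{(d)}$ be i.i.d.\ Gamma$(|\alpha|/d,1)$ random variables with sum $S_d:=\sum_{i=1}^d G_i^{(d)}$. By the additive property of Gammas, $S_d\sim$~Gamma$(|\alpha|,1)$ (independently of $d$), and $(G_1^{(d)}/S_d,\ldots,G_d^{(d)}/S_d)$ is $D_{|\alpha|/d,\ldots,|\alpha|/d}$-distributed and independent of $S_d$, so its ranked version has law $D^{\downarrow}_{|\alpha|,d}$. I would prove that the point process $\Xi_d:=\sum_{i=1}^d \delta_{G_i^{(d)}}$ on $(0,\infty)$ converges weakly, as $d\to\infty$, to the Poisson point process with intensity $N_{|\alpha|}(dy)=|\alpha|y^{-1}e^{-y}dy$. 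This is a standard Laplace-functional computation: for a nonnegative test $\xi$ vanishing near $0$,
\[
\mbb{E}\bigl[e^{-\langle \Xi_d,\xi\rangle}\bigr]=\left(\int_0^\infty e^{-\xi(y)}\frac{y^{|\alpha|/d-1}e^{-y}}{\Gamma(|\alpha|/d)}\,dy\right)^d,
\]
and, using $\Gamma(|\alpha|/d)\sim d/|\alpha|$ as $d\to\infty$, the right-hand side tends to the functional $\mathcal{F}_{|\alpha|}(e^{-\xi})$ of (\ref{pdmgf}). The ranking-and-normalization map $y\mapsto \psi(y)/|y|$ is a.s.\ continuous at realizations of the limit Poisson process (which a.s.\ has finite total mass, finitely many atoms above any $\varepsilon>0$, and no ties), so $D^{\downarrow}_{|\alpha|,d}$ converges in distribution to the law of $\psi(Y^{\infty})/|Y^{\infty}|$, namely $PD_{|\alpha|}$.

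For part (ii), I would combine (i) with the identity (\ref{formrdm}),
\[
DM^{\downarrow}_{|\alpha|,d}(r^{\downarrow};N)=\mbb{E}_{D^{\downarrow}_{|\alpha|,d}}\left\{\binom{N}{r^{\downarrow}}\frac{1}{\prod_{i\geq 1}\beta_i(r^{\downarrow})!}\,[X^{\downarrow},r^{\downarrow}]_d\right\},
\]
and the explicit closed-form limit $DM^{\downarrow}_{|\alpha|,d}(r^{\downarrow};N)\to ESF_{|\alpha|}(r)$ already computed in (\ref{esf}). It then suffices to pass to the limit on the right-hand side. The integrand $[X^{\downarrow},r^{\downarrow}]_d$ is a symmetric polynomial depending on at most $k(r)\leq N$ of the largest coordinates of $X^{\downarrow}$, it is continuous on $\Delta_\infty$ in the product topology, and it converges pointwise to $[X^{\downarrow},r^{\downarrow}]$ as $d\to\infty$. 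Identity (\ref{sym}) forces each combinatorial term $\binom{N}{r^{\downarrow}}(\prod_i\beta_i!)^{-1}[x,r^{\downarrow}]_d$ to lie in $[0,1]$, so dominated convergence applied inside the expectation, together with (i), yields (\ref{esfmom}).

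\textbf{Main obstacle.} The delicate point is the weak convergence step in (i), more precisely the a.s.\ continuity of the ranking-and-normalization functional at typical realizations of the limit Poisson point process: one must check that the sum $\sum_i Y_i$ is a.s.\ finite (which follows from $\int_0^\infty (1\wedge y)\,N_{|\alpha|}(dy)<\infty$) and that, on this event, ranked normalized coordinates depend continuously on the point configuration in the vague topology. Once this is handled, (ii) is essentially a bounded-convergence argument for a continuous symmetric functional.
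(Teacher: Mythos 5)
For part (i) your argument is essentially the paper's own: the authors also realize $D^{\downarrow}_{|\alpha|,d}$ as the ranked, normalized version of $d$ i.i.d.\ Gamma$(|\alpha|/d,1)$ points, show that the probability generating functional (\ref{pdlim}) of that finite point process converges to $\mathcal{F}_{|\alpha|}$ of (\ref{pdmgf}), and then invoke continuity of the ordering map; your Laplace-functional computation is the same calculation in different clothing, and your explicit attention to a.s.\ continuity of the ranking-and-normalization functional (finiteness of $|Y^{\infty}|$, no ties, finitely many atoms above any level) is a welcome sharpening of a step the paper states in one clause. For part (ii) the paper gives no proof at all --- it simply cites \cite{G79} --- so your argument via the finite-$d$ moment identity (\ref{formrdm}), the already-computed limit (\ref{esf}), and a passage to the limit inside the expectation is a genuine addition rather than a variant.

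One technical point in (ii) deserves repair. You assert that $x\mapsto[x,r^{\downarrow}]$ is continuous on $\Delta_{\infty}$ in the product topology; this fails whenever the partition $r^{\downarrow}$ has singleton parts (already for $r^{\downarrow}=(1)$ the functional is $\sum_i x_i$, which is only lower semicontinuous: the points $(1/n,\ldots,1/n,0,\ldots)$ converge coordinatewise to $0$ while the functional stays equal to $1$). Hence weak convergence in the product topology plus bounded convergence does not by itself give convergence of the expectations for all $r^{\downarrow}$. The gap is closable: for instance, portmanteau gives $\liminf_{d}\mbb{E}_{D^{\downarrow}_{|\alpha|,d}}[\,\cdot\,]\geq\mbb{E}_{PD_{|\alpha|}}[\,\cdot\,]$ for each of the finitely many partitions of $N$ (the functionals being bounded and lower semicontinuous), and since both $\{DM^{\downarrow}_{|\alpha|,d}(\cdot;N)\}$ and the candidate limits $\mbb{E}_{PD_{|\alpha|}}[\,\cdot\,]$ sum to $1$ over partitions of $N$ (the latter by (\ref{sym}) on the support $\{|x|=1\}$ of $PD_{|\alpha|}$), the inequalities must all be equalities; alternatively one can upgrade (i) to convergence jointly with total mass, using that $|Y^{(d)}|$ is exactly Gamma$(|\alpha|)$ for every $d$. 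Either patch should be stated explicitly.
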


\begin{pf}
If $\mb{Y}=(Y_1,\ldots,Y_d)$ is a collection of $d$ independent random
variables with identical distribution $\operatorname{Gamma}(|\alpha|/d,1),$ then their
sum $|\mb{Y}|$ is a $\operatorname{Gamma}(|\alpha|)$ random variable independent of
$\mb{Y}/|\mb{Y}|,$ which has distribution $D_{\alpha|,d}.$ The
probability generating
functional of $\mb{Y}$ is (\cite{G79})
%
%
%e1.19 ###
\begin{eqnarray}\label{pdlim}
\mathcal{F}_{|\alpha|,d}(\xi)&=&\biggl(1+\int_0^\infty\bigl(\xi(y)-1\bigr)
\frac{|\alpha|}{d}\frac{y^{{|\alpha|}/{d}-1}\mathrm{e}^{-y}}{\Gamma({|\alpha|}/{d}+1)}\,\mathrm{d}y\biggr)^{d}
\nonumber
\\[-8pt]
\\[-8pt]
\nonumber
&\displaystyle\mathop{\rightarrow}_{d\rightarrow\infty}&
\mathcal{F}_{|\alpha|}(\xi) ,
\end{eqnarray}
which, by continuity of the ordering function $\psi$, implies that
\textit{if $X^{\downarrow d}$ has distribution $D^{\downarrow}_{|\theta|,d},$
then}
\[
X^{\downarrow d}\mathop{\rightarrow}^{\mathcal{D}}X^{\downarrow
\infty}.
\]
This proves (i).
For the proof of (ii) we refer to~\cite{G79}.
\end{pf}

%s2 ###
\section{Polynomial kernels in the Dirichlet distribution}
\label{sec:k1}
The aim of this
section is to show that, for every fixed $d\in\mbb{N}$ and $\alpha
\in
\mbb{R}^d,$ the orthogonal polynomial kernels with respect to
$D_\alpha
$ can be constructed from systems of two dependent P\'{o}lya urns
sharing a fixed number of random elements in common.

Consider two P\'{o}lya urns $U_1$ and $U_2$ with identical initial
composition $\alpha$, and impose on them the constraint that {the first
$ m$ draws from $U_1$ are identical to the first $m$ draws from $U_2$}.
For $M\leq N$ sample $M+ m$ balls from $U_1$ and $N+ m$ balls from
$U_2$. At the end of the experiment, the probability of having observed
frequencies $\mb{r}$ and $\mb{s}$, respectively, in the $M$
unconstrained balls sampled from $U_1$ and in the $N$ ones from $U_2,$
is, by (\ref{posdm}),
%
%
%e2.1 ###
\begin{eqnarray}\label{jointpolya}
&&\sum_{|\mb{l}|= m}\operatorname{DM}_{\alpha, m}(\mb{l})\operatorname{DM}_{\alpha+\mb
{l},M}(\mb
{r})\operatorname{DM}_{\alpha+\mb{l},N}(\mb{s})
\nonumber
\\[-8pt]
\\[-8pt]
\nonumber
&&\quad  = \operatorname{DM}_{\alpha,M}(\mb{r})\operatorname{DM}_{\alpha,N}(\mb{s})\xi^{H,\alpha}_{
m}(\mb{r},\mb{s}),
\end{eqnarray}
where
%
%
%e2.2 ###
\begin{equation}
\xi^{H,\alpha}_{ m}(\mb{r},\mb{s})=\sum_{|\mb{l}|= m}\frac
{\operatorname{DM}_{\alpha
+\mb{s}, m}(\mb{l})\operatorname{DM}_{\alpha+\mb{r}, m}(\mb{l})}{\operatorname{DM}_{\alpha,
m}(\mb
{l})}.\label{hxi}
\end{equation}
As $N,M\rightarrow\infty$, if we assume $N^{-1}\mb{s}\to\mb
{x},M^{-1}\mb{r}\to\mb{y}$, we find that this probability distribution
tends to
\[
D_\alpha(\mathrm{d}\mb{x})D_\alpha(\mathrm{d}\mb{y})\xi^{\alpha}_{ m}({\mb
{x}},{\mb{y}}),
\]
where
%
%
%e2.4 ###
%e2.3 ###
\begin{eqnarray}
\xi_{ m}^{\alpha}(\mb{x},\mb{y}) &=& \sum_{|\mb{l}|= m}\pmatrix{ m\cr
\mb{l}}
\frac{ |\alpha|_{( m)} }
{ \prod_1^d{\alpha_i}_{(l_i)} } \prod_1^d(x_iy_i)^{l_i} \label
{xin}\\
&=&\sum_{|\mb{l}|= m}\frac{{{ m}\choose\mb{l}}\mb{x}^{\mb{l} }  {{
m}\choose\mb{l}}\mb{y}^{\mb{l}}}{\operatorname{DM}_{\alpha, |m|}(\mb{l})}.\label{xin2}
\end{eqnarray}
Notice that, because P\'{o}lya sequences are exchangeable (i.e., their
law is invariant under permutations of the sample coordinates), the
same formula (\ref{xin2}) would hold even if we only assumed that the
sequences sampled from $U_1$ and $U_2$ have in common any $ m$ (and not
necessarily the first~$ m$) coordinates.
%s2.1 ###
\subsection{\texorpdfstring{Polynomial kernels for $d\geq2$}{Polynomial kernels for d>=2}}

We shall now prove the following:
\begin{proposition}
\label{pr:qn} For every $\alpha\in\mbb{R}^{d}_{+}$ and every
integer $
n,$ the $ n$th orthogonal polynomial kernel, with respect to
$D_{\alpha}$, is given by
%
%
%e2.5 ###
\begin{equation}
Q_{ n}^{\alpha}(\mb{x},\mb{y}) = \sum_{ m=0}^{ n}a_{ n m}^{|\alpha
|} \xi
_{ m }^{\alpha}(\mb{x},\mb{y}), \label{Dpoly}
\end{equation}
where
%
%
%e2.6 ###
\begin{equation}
a_{ n m}^{|\alpha|}=(|\alpha| + 2 n -1)(-1)^{ n- m}\frac{(|\alpha| +
m)_{( n-1)}}{ m!( n- m)!} \label{anm}
\end{equation}
form a lower-triangular, invertible system.
An inverse relationship is
%
%
%e2.7 ###
\begin{equation}
\xi^\alpha_{ m} (\mb{x},\mb{y})= 1 + \sum_{ n=1}^{ m} \frac{( m)_{[
n]}}{(|\alpha| +| m|)_{( n)}} Q_{ n}^{\alpha}(\mb{x},\mb{y}).
\label{inverse}
\end{equation}
\end{proposition}
\begin{remark}
A first construction of the Kernel polynomials was given by
\cite{G79a}. We provide here a revised proof. Operators with a role
analogous to the function $\xi_{ m}$ have, later on, appeared in
different contexts, but with little emphasis on P\'{o}lya urns or on
the probabilistic aspects (\cite{R99,W06} are some examples). A
closer, recent result is offered in~\cite{P08} where a multiple
integral representation for square-integrable functions with respect to
Ferguson--Dirichlet random measures is derived in terms of P\'{o}lya urns.
\end{remark}

\begin{pf*}{Proof of Proposition \protect\ref{pr:qn}}
Let $\{Q_{\mb{n}}^\circ\}$ be a system of \textit{orthonormal} polynomials with respect to $D_{\alpha}$ (i.e., such
$\mbb
{E}({Q_{\mb{n}}^\circ}^{2})=1$). We need to show that,
for independent Dirichlet distributed vectors $X,Y$, if $ n, k \leq
m$, then
%
%
%e2.8 ###
\begin{equation}
\mbb{E}(\xi^{\alpha}_{ m}(\mb{X},\mb{Y})Q_{\mb{n}}^\circ(\mb
{X})Q_{\mb{k}}^\circ(\mb{Y})) = \delta_{\mb{n}\mb{k}} \frac{(
m)_{[ n]}}{(|\alpha| + m)_{( n)}}. \label{maineq}
\end{equation}
If this is true, an expansion is therefore
%
%
%e2.9 ###
\begin{eqnarray}
\xi_{ m}^{\alpha}(\mb{x},\mb{y}) &=& 1 + \sum_{ n=1}^{ m} \frac{
( m)_{[ n]} } { (|\alpha| + m)_{( n)} } \sum_{\{\mb{n}: |\mb{n}|=n\}}
Q^\circ_{\mb{n}}(\mb{x})Q^\circ_{\mb{n}}(\mb{y})
\nonumber
\\[-8pt]
\\[-8pt]
\nonumber
& = & 1 + \sum_{ n=1}^{ m} \frac{( m)_{[ n]}}{(|\alpha| +
m)_{( n)}} Q_{ n}^{\alpha}(\mb{x},\mb{y}).
\end{eqnarray}
Inverting the triangular matrix with $(m,n)$th element
\[
\frac{( m)_{[ n]}}{(|\alpha| + m)_{( n)}}
\]
gives (\ref{Dpoly}) from (\ref{inverse}). The inverse matrix is
triangular with $( m, n)$th element
\[
(|\alpha| + 2 n-1) (-1)^{ n- m} \frac{(|\alpha| +
m)_{( n-1)}}{ m!( n- m)!},\qquad n\geq m,
\]
and the proof will be complete.

\textit{Proof of} (\ref{maineq}). Write
%
%e2.10 ###
\begin{equation}
\mbb{E}\Biggl(\prod_1^{d-1}{X_i}^{n_i}\xi^{\alpha}_{ m}(\mb{X},\mb{Y})
\Big\vert\mb{Y} \Biggr) = \sum_{\{\mb{l}: |\mb{l}| = m\}} \pmatrix{ m\cr\mb
{l}}\prod_1^{d}Y_i^{l_i}
\frac{\prod_1^{d-1}(l_i + \alpha_i)_{(n_i)}}{(|\alpha| + m)_{( n)}}.
\label{xxi}
\end{equation}
Expressing the last product in (\ref{xxi}) as
\[
\prod_1^{d-1}(l_i+\alpha_i)_{(n_i)} = \prod_1^{d-1}{l_i}_{[n_i]} +
\sum
_{\{\mb{k}:|\mb{k}| < |n|\}}b_{\mb{n}\mb{k}}\prod_1^{d-1}{l_i}_{[k_i]}
\]
for constants
$b_{\mb{n}\mb{k}}$,
%from the identity
%(l^\ast+ \alpha^\ast)_{(n^\ast)} = \sum_{k^\ast=0}^{n^\ast}{n^\ast
shows that
%
%
%e2.11 ###
\begin{equation}
\mbb{E}\Biggl(\prod_1^{d-1}{X_i}^{n_i}\xi^{\alpha}_{ m}(\mb{X},\mb
{Y})\Big\vert\mb{Y} \Biggr) = \frac{( m)_{[n]}}{(|\alpha| + m)_{(n)}}\prod
_1^{d-1}Y_i^{n_i} +
R_0(\mb{Y}).
\end{equation}
Thus if $ n \leq k \leq m$,
%
%
%e2.12 ###
\begin{eqnarray}
\mbb{E}(\xi^{\alpha}_{ m}(\mb{X},\mb{Y})Q^\circ_{\mb{n}}(\mb
{X})\vert \mb{Y}) &=& \frac{( m)_{[ n]}}{(|\alpha|+ m)_{( n)}}
\sum_{\{\mb{k}:|\mb{k}|= n\}}a_{\mb{n}\mb{k}}\prod_1^{d-1}{Y}_i^{k_i}
+ R_1(\mb{Y})\nonumber\\
&=& \frac{( m)_{[ n]}}{(|\alpha|+ m)_{( n)}}Q^\circ_{\mb{n}}(\mb
{Y}) +
R_2(\mb{Y}),
\end{eqnarray}
where
\[
\sum_{\{\mb{k}:|\mb{k}|= n\}}a_{\mb{n}\mb{k}}\prod_1^{d-1}{X}_i^{k_i}
\]
are terms of leading degree $ n$ in $Q^\circ_{\mb{n}}(\mb{X})$ and
$R_j(\mb{Y})$, $j=0,1,2$, are polynomials of degree less than $ n$ in
$\mb{Y}$. Thus if $ n \leq
k \leq m$,
%
%
%e2.13 ###
\begin{eqnarray}\label{proofo}
\mbb{E}(\xi^{\alpha}_{ m}(\mb{X},\mb{Y})Q_{\mb{n}}^\circ(\mb
{X})Q_{\mb{k}}^\circ(\mb{Y})) &=& \mbb{E}\biggl(Q_{\mb{k}}^\circ
(\mb{Y}) \biggl\{
\frac{( m)_{[ n]}}{(|\alpha|+ m)_{( n)}}Q^\circ_{\mb{n}}(\mb{Y}) +
R_2(\mb{Y}) \biggr\} \biggr)
\nonumber
\\[-8pt]
\\[-8pt]
\nonumber
&=& \frac{( m)_{[ n]}}{(|\alpha|+ m)_{( n)}}\delta_{\mb{n}\mb{k}}.
\end{eqnarray}
By symmetry, (\ref{proofo}) holds for all $\mb{n},\mb{k}$ such that $
n, k\leq m$.
\end{pf*}

%s2.2 ###
\subsection{Some properties of the kernel polynomials}
\label{sec:2_2}

\subsubsection*{Particular cases}
\begin{eqnarray*}
Q^\alpha_0 &=&1, \\
Q^{\alpha}_1 &=& (|\alpha|+1)(\xi_1 -1)\\
&=& (|\alpha| + 1)\biggl(|\alpha|\sum_1^\mathrm{d}x_iy_i/\alpha_i - 1\biggr),
\\
Q^{\alpha}_2 &=& \tfrac{1}{2}(|\alpha| + 3)\bigl((|\alpha|+2)\xi_2 -2
(|\alpha|+1)\xi_1 + |\alpha|\bigr),
\end{eqnarray*}
where
\[
\xi_2 = |\alpha|(|\alpha|+1)\biggl( \sum_1^d(x_iy_i)^2\big/\alpha_i(\alpha
_i+1) + 2\sum_{i<j}x_ix_jy_iy_j\big/\alpha_i\alpha_j\biggr).
\]

\subsubsection*{The $j$th coordinate kernel}
A well-known property of Dirichlet measures is that, if $\mb{Y}$ is a
$\operatorname{Dirichlet}(\alpha)$ vector in $\Delta_{(d-1)}$, then its $j$th
coordinate $Y_j$ has distribution $D_{\alpha_j,|\alpha|-\alpha_j}.$
Such a property is reflected in the Jacobi polynomial kernels.
For every $d$, let $\mb{e}_j$ be the vector in $\mbb{R}^d$ with every
$i$th coordinate equal $\delta_{ij}$, $i,j=1,\ldots,d$. Then
%
%
%e2.14 ###
\begin{equation}
\xi^{\alpha}_{ m}(\mb{y},\mb{e}_j)=\frac{(|\alpha|)_{(
m)}}{(\alpha
_j)_{( m)}}y_j^{ m},  \qquad   m\in\mbb{Z}_+, \mb{y}\in\Delta_{(d-1)}.
\label{xiej}
\end{equation}
In particular,
%
%
%e2.15 ###
\begin{equation}
\xi^{\alpha}_{ m}(\mb{e}_j,\mb{e}_k)=\frac{(|\alpha|)_{(
m)}}{(\alpha
_j)_{( m)}}\delta_{jk}.
\label{xiejek}
\end{equation}
Therefore, for every $d$ and $\alpha\in\mbb{R}_+^d,$ (\ref{xiej}) implies
%
%
%e2.18 ###
%e2.17 ###
%e2.16 ###
\begin{eqnarray}\label{qej}
Q_{ n}^{\alpha}(\mb{y},\mb{e}_j)&=&\sum_{ m=0}^{ n}a_{ n
m}^{|\alpha
|}\xi_{ m}^{\alpha}(\mb{e}_j,\mb{y})
=Q_{ n}^{\alpha_j,|\alpha|-\alpha_j}(y_j,1)
\nonumber
\\[-8pt]
\\[-8pt]
\nonumber
&=&\zeta_{ n}^{\alpha_j,|\alpha|-\alpha_j}R_{ n}^{\alpha_j,|\alpha
|-\alpha_j}(y_j),\qquad j=1,\ldots,d, \mb{y}\in\Delta_{(d-1)},\nonumber
\end{eqnarray}
where
%
%
%e2.19 ###
\begin{eqnarray}
R_{n}^{\alpha,\beta}(x)&=&\frac{Q_n^{\alpha,\beta}(x,1)}{Q_n^{\alpha
,\beta
}(1,1)}
\nonumber
\\[-8pt]
\\[-8pt]
\nonumber
&=&{}_{2}F_1\left(\matrix{
-n, n+\theta-1\vspace*{2pt}\cr
\beta}
\bigg|  {1-x}\right),\qquad n=0,1,2,\ldots\label{2djac}, \theta=\alpha
+\beta,
\end{eqnarray}
are univariate Jacobi polynomials $(\alpha>0,\beta>0)$ normalized by
their value at 1 and
\[
\frac{1}{\zeta^{\alpha,\beta}_{ n}}:={\mbb{E}}[R_{ n}^{\alpha
,\beta
}(X)]^2.
\]
In (\ref{2djac}), ${}_{p}F_q, p,q\in\mbb{N},$ denotes the Hypergeometric function (see~\cite{AS}
for basic properties).

\begin{remark} For $\alpha,\beta\in\mbb{R}_{+},$ let $\theta
=\alpha
+\beta.$ It is known (e.g.,~\cite{GS08}, (3.25)) that
%
%
%e2.20 ###
\begin{equation}\frac{1}{\zeta^{\alpha,\beta}_{ n}}=n!\frac
{1}{(\theta
+2n-1)\ifact{(\theta)}{n-1}}\frac{ \ifact{(\alpha)}n}{\ifact{(\beta)}n}.\label{zetan}
\end{equation}
On the other hand, for every $\alpha=(\alpha_1,\ldots,\alpha_d),$
%
%
%e2.21 ###
\begin{equation}
\zeta_{ n}^{\alpha_j,|\alpha|-\alpha_j}=Q_{ n}^{\alpha}(\mb
{e}_j,\mb{e}_j)=
\sum_{ m=0}^{ n}a_{ n m}^{|\alpha|}\frac{\ifact{(|\alpha|)}{
m}}{\ifact
{(\alpha_j)}{ m}}.
\label{zetaid}
\end{equation}
%
%Thus the identity
%n-1)}{ m}}{\ifact{(\alpha_j)}{ m}}\label{zetaid2}
%holds for every $|\alpha|$ positive and $0<\alpha_j<|\alpha|$ . In the
%limit as $\alpha_j\rightarrow|\alpha|,$ this implies
%n-1}}=0.
\end{remark}

\subsubsection*{Addition of variables in $x$}
\label{sec:sums}

Let $A$ be a $d^\prime\times d$ ($d^\prime< d$) 0--1 matrix whose rows
are orthogonal. A known property of the Dirichlet distribution is that,
if $\mb{X}$ has distribution $D_{\alpha},$
then $A\mb{X}$ has a ${
D}_{A\alpha}$ distribution. \noindent Similarly, with some easy computation
\[
\mbb{E}\bigl(\xi^{\alpha}_{ m}(\mb{X},\mb{y})\vert A\mb{X}=a\mb{x}\bigr) =
\xi^{A\alpha}_{ m}(A\mb{X},A\mb{y}).
\]
One has therefore the following:
\begin{proposition}\label{prp:col}A representation for Polynomial
kernels in $D_{A{\alpha}}$ is
%
%
%e2.22 ###
\begin{equation}
Q^{A\alpha}_{ n}(A\mb{x},A\mb{y}) = \mbb{E}[Q^\alpha_{ n}(\mb
{X},\mb{y})\vert  A\mb{X}=A\mb{x}].
\end{equation}
\end{proposition}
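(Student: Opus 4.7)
The plan is to combine two ingredients: (a) the expansion (\ref{Dpoly}) of $Q^{\alpha}_{|n|}$ in the kernels $\xi^{\alpha}_{|m|}$ with coefficients $a^{|\alpha|}_{|n||m|}$ that depend on $\alpha$ only through $|\alpha|$, and (b) the conditional transformation rule for the $\xi^{\alpha}_{|m|}$ kernels stated in the paragraph preceding the proposition, namely
\[
\mbb{E}\bigl[\xi^{\alpha}_{|m|}(X,y)\,\bigl|\,AX=Ax\bigr]=\xi^{A\alpha}_{|m|}(Ax,Ay).
\]
Because $A$ is a 0--1 matrix whose rows are orthogonal and whose columns partition $\{1,\ldots,d\}$, we have $|A\alpha|=|\alpha|$, so the coefficients in (\ref{Dpoly}) for $Q^{\alpha}_{|n|}$ and for $Q^{A\alpha}_{|n|}$ coincide. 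Taking conditional expectation term-by-term in (\ref{Dpoly}) and applying the transformation rule yields
\[
\mbb{E}\bigl[Q^{\alpha}_{|n|}(X,y)\,\bigl|\,AX=Ax\bigr]
=\sum_{|m|=0}^{|n|}a^{|\alpha|}_{|n||m|}\,\xi^{A\alpha}_{|m|}(Ax,Ay)
=Q^{A\alpha}_{|n|}(Ax,Ay),
\]
as required. So once the transformation rule for $\xi^{\alpha}_{|m|}$ is established, the proposition is immediate.

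The one step that requires real computation is verifying the transformation rule (b). Let $B_1,\ldots,B_{d'}$ be the blocks indexed by the rows of $A$. The conjugacy property of the Dirichlet distribution gives that, conditional on $(AX)_k=u_k$ for each $k$, the normalized block vectors $(X_i/u_k:i\in B_k)$ are independent Dirichlet with parameters $\alpha_{B_k}$; hence for any multi-index $l$ with $|l|=|m|$, writing $L_k=\sum_{i\in B_k}l_i$,
\[
\mbb{E}\bigl[X^l\,\bigl|\,AX=u\bigr]
=\prod_{k=1}^{d'}u_k^{L_k}\cdot\frac{\prod_{i\in B_k}(\alpha_i)_{(l_i)}}{((A\alpha)_k)_{(L_k)}}.
\]
Plugging this into the explicit form (\ref{xin}) of $\xi^{\alpha}_{|m|}(X,y)$, the products $\prod_i(\alpha_i)_{(l_i)}$ cancel, and one regroups the sum over $l$ by the block sums $L=(L_1,\ldots,L_{d'})$ using the identity $\binom{|m|}{l}=\binom{|m|}{L}\prod_k\binom{L_k}{l_{B_k}}$. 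The inner sums over $l_{B_k}$ with $|l_{B_k}|=L_k$ collapse to $(Ay)_k^{L_k}$ by the multinomial theorem applied to $\sum_{i\in B_k}y_i=(Ay)_k$, and using $|A\alpha|=|\alpha|$ the result is exactly $\xi^{A\alpha}_{|m|}(Ax,Ay)$.

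The main obstacle is purely combinatorial: tracking the multi-index sums and regrouping them according to the partition $\{B_k\}$. Conceptually there is nothing deep beyond Dirichlet conjugacy and the multinomial theorem, but the bookkeeping must be done carefully so that all Pochhammer symbols and multinomial coefficients reassemble into the form (\ref{xin}) for the reduced parameter $A\alpha$. Once that identity is in hand, Proposition \ref{prp:col} follows by linearity from (\ref{Dpoly}) together with the crucial fact that the coefficients $a^{|\alpha|}_{|n||m|}$ are invariant under the aggregation $\alpha\mapsto A\alpha$.
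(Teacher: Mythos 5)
Your proof is correct and follows exactly the route the paper takes: the paper simply asserts the identity $\mbb{E}(\xi^{\alpha}_{|m|}(X,y)\mid AX=Ax)=\xi^{A\alpha}_{|m|}(Ax,Ay)$ as an ``easy computation'' and then concludes via the expansion (\ref{Dpoly}), using implicitly that the coefficients $a^{|\alpha|}_{|n||m|}$ depend on $\alpha$ only through $|\alpha|=|A\alpha|$. You have in fact supplied more detail than the paper does, since your block-conditional Dirichlet moment calculation and multinomial regrouping is precisely the omitted computation, and it checks out.
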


\begin{example}\label{ex:proj} For any $\alpha\in\mbb{R}^{d}$ and
$k\leq d$, suppose $A\mb{X} = (X_1+\cdots+ X_k,X_{k+1} + \cdots+ X_d)
= {X}^\prime$. Then, denoting
$\alpha'=\alpha_1+\cdots+\alpha_k$ and $\beta'=\alpha
_{k+1}+\cdots
+\alpha_{d},$ one has
\[
Q_{ n}^{A\alpha}({x}^\prime,{y}^\prime) =\zeta_{ n}^{\alpha',\beta'}
R_{ n}^{\alpha',\beta'}(x^\prime)R_{ n}^{\alpha',\beta'}(y^\prime)
= \mbb{E}[Q^\alpha_{ n}(\mb{X},\mb{y}) \vert  X^\prime= x^\prime
].
\]
\end{example}

%s3 ###
\section{Kernel polynomials on the Dirichlet multinomial distribution}
\label{sec:k2} For the Dirichlet multinomial distribution, it is
possible to derive an explicit formula for the kernel polynomials by
considering that Hahn polynomials can be expressed as \textit{posterior}
mixtures of Jacobi polynomials; cf.~\cite{GS08}, Proposition~5.2. Let $
\{Q^\circ_{\mb{n}}(\mb{x})\}$ be a orthonormal polynomial set on
the Dirichlet, considered as functions of $(x_1,\ldots,x_{d-1})$.
Define, for $\mb{r}\in N\Delta_{(d-1)}$,
%
%
%e3.1 ###
\begin{equation}
h^\circ_{\mb{n}}(\mb{r} ; N) = \int Q^\circ_{\mb{n}}(\mb{x})
D_{\alpha
+\mb{r} }(\mathrm{d}\mb{x}), \label{transform}
\end{equation}
then $\{h^\circ_{\mb{n}}\}$ is a system of multivariate orthogonal
polynomials with respect to $\operatorname{DM}_{\alpha, N}$ with constant of orthogonality
%
%
%e3.2 ###
\begin{equation}
\mbb{E}_{\alpha, N}[{h^\circ_{\mb{n}}(\mb{R} ;N)}]^{2}=
\frac{ (N)_{[ n]} } { (|\alpha|+N)_{( n)}}. \label{hahncp}
\end{equation}
Note also that if $N \to
\infty$ with $r_i/N \to x_i$, $i=1,\ldots,d$, then
\[
\lim_{N\to\infty} h^\circ_{\mb{n}}(\mb{r} ; N) = Q^\circ_{\mb
{n}}(\mb{x}).
\]

\begin{proposition}
\label{pr:KDM} The Hahn kernel polynomials with respect to $\operatorname{DM}_{\alpha
,N}$ are
%
%
%e3.3 ###
\begin{equation}
H^{\alpha}_{ n}(\mb{r},\mb{s}) = \frac{(|\alpha| + N)_{( n)}}{N_{[
n]}}\int\int Q^{\alpha}_{ n}(\mb{x},\mb{y}) D_{\alpha+\mb
{r}}(\mathrm{d}\mb
{x})D_{\alpha+\mb{s}}(\mathrm{d}\mb{y})\label{KDM}
\end{equation}
for $\mb{r} = (r_1,\ldots,r_d)$, $\mb{s} = (s_1,\ldots,s_d)$, $|\mb
{r}|= |\mb{s}|=N$ fixed, and $ n = 0,1,\ldots, N$.

An explicit expression is
%
%
%e3.4 ###
\begin{equation}
H^{\alpha}_{ n}(\mb{r},\mb{s}) = \frac{ (|\alpha| + N)_{( n)}
}{ r_{[ n]}}\cdot
\sum_{m=0}^na_{ n m}^{|\alpha|}\xi^{H,\alpha}_{ m}(\mb{r},\mb{s}),
\label{explicitDM}
\end{equation}
where $(a_{ n m}^{|\alpha|})$ is as in (\ref{anm}) and $\xi
^{H,\alpha
}_{ m}(\mb{r},\mb{s})$ is given by (\ref{hxi}).
\end{proposition}

\begin{pf}
The kernel sum is, by definition,
%
%
%e3.5 ###
\begin{equation}
H^{\alpha}_{ n}(\mb{r},\mb{s}) = \frac{ (|\alpha|+N)_{( n)} } {
N_{[ n]} } \sum_{\{\mb{n}: |\mb{n}| =n\}}
h^\circ_{\mb{n}}(\mb{r} ; N)h^\circ_{\mb{n}}(\mb{s} ; N),
\end{equation}
and from (\ref{KDM}), (\ref{explicitDM}) follows. The form of $\xi_{
m}^{H,\alpha}$ is obtained by taking the expectation of
$\xi_{ m}^{\alpha}(\mb{X},\mb{Y}),$ appearing in the representation
(\ref{Dpoly}) of $Q_{ n}^{\alpha},$ with respect to the product measure
$D_{\alpha+\mb{r}}D_{\alpha+\mb{s}}.$
\end{pf}

The first polynomial kernel is
\[
H^{\alpha}_1(\mb{r},\mb{s}) = \frac{(|\alpha|+1)(|\alpha
|+{r})}{|\alpha
|}
\Biggl(\frac{|\alpha|}{(|\alpha|+N)^2}
\sum_1^d\frac{(\alpha_i+r_i)(\alpha_i+s_i)}{\alpha_i} - 1 \Biggr).
\]

\noindent\textit{Projections on one coordinate}\vspace*{6pt}

\noindent As in the Jacobi case, the connection with Hahn polynomials on $\{
0,\ldots,N\}$ is given by marginalization on one coordinate.
\begin{proposition}
\label{l:chu} For $N\in\mbb{N}$ and $d\in\mbb{N}$, denote ${\mb
{r}}_{j,1}=N\mb{e}_j\in\mbb{N}^{d},$ where $\mb{e}_j=(0,\ldots
,0,1,0, \ldots,0)$ with $1$ only at the $j$th coordinate.

For every $\alpha\in\mbb{N}^d,$
%
%
%e3.6 ###
\begin{equation}
H_{ n}^{\alpha}(\mb{s},N\mb{e}_j)=\frac{1}{c^{|\alpha|}_{N,
n}}{h}^{\circ(\alpha_j,|\alpha|-\alpha_j)}_{ n}(s_j;N)
{h}^{\circ(\alpha_j,|\alpha|-\alpha_j)}_{ n}(N;N),\qquad
|\mb{s}|=N, \label{mhk}
\end{equation}
where
\[
c^{|\alpha|}_{N, n}:=\frac{(N)_{[ n]}}{\ifact{(|\alpha|+N)}{
n}}=\mbb
{E}\bigl[{h}^{\circ(\alpha,\beta)}_{ n}(R;N)^2\bigr],
\]
and $\{{h}^{\circ(\alpha_j, |\alpha|-\alpha_j)}_{ n}\}$ are orthogonal
polynomials with respect to $\operatorname{DM}_{(\alpha_j,|\alpha|-\alpha_j), N}$.
\end{proposition}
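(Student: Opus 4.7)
The idea is to use the integral representation of Hahn kernels (Proposition~\ref{pr:KDM}) to write $H^\alpha_{|n|}(s,e_j|r|)$ as a double Dirichlet integral of the Jacobi kernel, reduce both integrations to two dimensions via the aggregation property of Proposition~\ref{prp:col}, and then invoke the fact that in dimension $d=2$ the Hahn kernel factors as a single product of univariate Hahn polynomials.

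By (\ref{KDM}),
\begin{equation*}
H^\alpha_{|n|}(s,e_j|r|) = \frac{(|\alpha|+|r|)_{(|n|)}}{|r|_{[|n|]}}\int\!\!\int Q^\alpha_{|n|}(x,y)\,D_{\alpha+s}(dx)\,D_{\alpha+e_j|r|}(dy).
\end{equation*}
The inner integral in $y$ can be reduced as follows. If $Y\sim D_{\alpha+e_j|r|}$, then by Dirichlet conjugacy the conditional law of $(Y_i)_{i\neq j}/(1-Y_j)$ given $Y_j$ is $D_{(\alpha_i)_{i\neq j}}$, \emph{independent} of the parameter boost at coordinate $j$; only the marginal of $Y_j$ changes, becoming $\mathrm{Beta}(\alpha_j+|r|,|\alpha|-\alpha_j)$. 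Applying Proposition~\ref{prp:col} (in its $y$-variable form, obtained from the symmetry of the kernel) to the aggregation $y\mapsto(y_j,1-y_j)$ therefore gives
\begin{equation*}
\mbb{E}\!\left[Q^\alpha_{|n|}(x,Y)\,\big|\,Y_j=z\right]=Q^{(\alpha_j,|\alpha|-\alpha_j)}_{|n|}\bigl((x_j,1-x_j),(z,1-z)\bigr).
\end{equation*}
Integrating out $Y_j$ yields
\begin{equation*}
\int Q^\alpha_{|n|}(x,y)\,D_{\alpha+e_j|r|}(dy)=\int Q^{(\alpha_j,|\alpha|-\alpha_j)}_{|n|}\bigl((x_j,1-x_j),(z,1-z)\bigr)\,D_{(\alpha_j+|r|,|\alpha|-\alpha_j)}(dz).
\end{equation*}

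The integrand now depends on $x$ only through $x_j$, whose marginal under $D_{\alpha+s}$ is $\mathrm{Beta}(\alpha_j+s_j,|\alpha|-\alpha_j+|r|-s_j)$, so the $x$-integration collapses by the same mechanism. Reading Proposition~\ref{pr:KDM} backwards in dimension $d=2$, we identify the result as a 2D Hahn kernel:
\begin{equation*}
H^\alpha_{|n|}(s,e_j|r|)=H^{(\alpha_j,|\alpha|-\alpha_j)}_{|n|}\bigl((s_j,|r|-s_j),(|r|,0)\bigr).
\end{equation*}
On $\{0,1,\ldots,|r|\}$ there is a unique orthogonal polynomial per degree, so this 2D kernel factors as the product of its orthonormalised version at the two arguments; converting to the $h^\circ$-normalisation of (\ref{hahncp}) inserts exactly the factor $1/c^{|\alpha|}_{|r|,|n|}$, and evaluating at $(s_j,|r|-s_j)$ and $(|r|,0)$ gives the stated identity.

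The most delicate step is the swap of Dirichlet parameters in the $y$-reduction above: it relies on the standard but essential fact that Dirichlet conditionals are insensitive to the parameter at the conditioning coordinate, which is exactly what lets Proposition~\ref{prp:col} be applied even though the integrating measure is $D_{\alpha+e_j|r|}$ rather than $D_\alpha$. Everything else is a routine application of (\ref{KDM}) and Proposition~\ref{prp:col} combined with the uniqueness of univariate orthogonal polynomials of a given degree.
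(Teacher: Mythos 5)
Your proof is correct, but it follows a genuinely different route from the paper's. The paper works directly with the explicit building blocks $\xi^{H,\alpha}_{|m|}$: it rewrites them via a Bayes-type identity as $\sum_{|l|=|m|}DM_{\alpha+s}(l;|m|)\,DM_{\alpha+l}(r;|s|)/DM_{\alpha}(r;|s|)$, then uses the factorization of $DM_{\alpha}$ into the $j$-th coordinate marginal times the conditional to show that $\xi^{H,\alpha}_{|m|}(s,e_j|r|)=\xi^{H,\alpha_j,|\alpha|-\alpha_j}_{|m|}(s_j,|r|)$, after which (\ref{mhk}) drops out of the expansion (\ref{explicitDM}). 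You instead stay at the level of the integral representation (\ref{KDM}), reduce the $y$-integral by noting that under $D_{\alpha+e_j|r|}$ the conditional law of the remaining coordinates given $Y_j$ is unchanged from $D_\alpha$ (so Proposition \ref{prp:col} applies, and only the Beta marginal of $Y_j$ picks up the boost $(|r|,0)$), collapse the $x$-integral the same way, and recognise the result as the two-dimensional Hahn kernel at $((s_j,|r|-s_j),(|r|,0))$, which factors because there is a single orthogonal polynomial per degree in one discrete variable. Both arguments are sound; the paper's has the side benefit of producing the intermediate identity (\ref{redxi2}) for the $\xi^H$ functions, which parallels (\ref{xiej}) in the Jacobi case, whereas yours makes the structural reason more transparent — marginalisation onto one coordinate commutes with the posterior-mixture representation — and reuses machinery (Dirichlet conjugacy, Proposition \ref{prp:col}) already established for the continuous case. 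The one step worth flagging as essential rather than routine is exactly the one you flag yourself: the insensitivity of the Dirichlet conditional to the parameter at the conditioning coordinate is what licenses applying Proposition \ref{prp:col} under the tilted measure $D_{\alpha+e_j|r|}$.
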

\begin{pf}
Because for every $d$ and $\alpha\in\mbb{R}^{d}_+$
\[
H_{ n}^{\alpha}(\mb{s},\mb{r})=\frac{1}{c^{|\alpha|}_{N, n}}\sum_{
m=0}^{ n}a^{|\alpha|}_{ n m}\xi^{H,\alpha}_{ m}(\mb{r},\mb{s})
\]
for $d=2$ and
$\alpha,\beta>0$ with $\alpha+\beta= |\alpha|,$
\[
{h}_{ n}^{\circ(\alpha,\beta)}(k;N){h}_{ n}^{\circ(\alpha,\beta
)}(j;N)=\sum_{ m=0}^{ n}a^{|\alpha|}_{ n m}\xi_{ m}^{H,\alpha,\beta
}(k,j),\qquad k,j=0,\ldots,N.
\]
Now, for $\mb{r},\mb{s}\in N\Delta_{(d-1)}$, rewrite $\xi_{
m}^{H,\alpha
}$ as
%
%
%e3.7 ###
\begin{eqnarray}
\xi_{ m}^{H,\alpha}(\mb{s},\mb{r})&=&\sum_{|\mb{l}|=
m}\frac{\operatorname{DM}_{\alpha+\mb{s}, m}(\mb{l})\operatorname{DM}_{\alpha+\mb{r}, m}(\mb
{l})}{\operatorname{DM}_{\alpha, m}(\mb{l})}
\nonumber
\\[-8pt]
\\[-8pt]
\nonumber
&=&\sum_{|\mb{l}|= m}\operatorname{DM}_{\alpha+\mb{s}, m}(\mb{l})\frac
{\operatorname{DM}_{\alpha+\mb
{l},N}(\mb{r})}{\operatorname{DM}_{\alpha,N}(\mb{r})}. \label{eqxih}
\end{eqnarray}
Consider, without loss of generality, the case $j=1$. Since, for every
$\alpha$,
\[
\operatorname{DM}_{\alpha, m}(\mb{l})=\operatorname{DM}_{(\alpha_1,|\alpha|-\alpha_1),
m}(l_1)\operatorname{DM}_{(\alpha_2,\ldots,\alpha_d), m-l_1}(l_2,\ldots,l_d),
\]
then\vspace*{-1pt}
%
%
%e3.12 ###
%e3.11 ###
%e3.10 ###
%e3.9 ###
%e3.8 ###
\begin{eqnarray}
\xi_{ m}^{H,\alpha}(\mb{s},Ne_1)&=&\sum_{l_1=0}^{ m}\operatorname{DM}_{(\alpha
_1+s_1,|\alpha|-\alpha_1+ m-s_1), m}(l_1)\frac{\operatorname{DM}_{(\alpha
_1+l_1,|\alpha
|-\alpha_1+ m-l_1),N}(N)}{\operatorname{DM}_{(\alpha_1,|\alpha|-\alpha_1),N}(N)}
\nonumber\\[-0.5pt]
&& \hspace*{14pt}{}\times\sum_{|\mb{u}|= m-l_1}\operatorname{DM}_{\alpha'+\mb{s}',
m-l_1}(\mb{u})\frac{\operatorname{DM}_{\alpha+l,0}(0)}{\operatorname{DM}_{\alpha,0}(0)} \nonumber\\[-0.5pt]
&=&\sum_{l_1=0}^{ m}\operatorname{DM}_{\alpha+\mb{s}, m}(l_1)\frac{\operatorname{DM}_{\alpha
_1+l_1,N}(N)}{\operatorname{DM}_{\alpha,N}(N)}
\sum_{|\mb{u}|= m-l_1}\operatorname{DM}_{\alpha'+\mb{s}', m-l_1}(\mb{u}) \nonumber\\[-0.5pt]
&=&\sum_{l_1=0}^{ m}\operatorname{DM}_{\alpha+\mb{s}, m}(l_1)\frac{\operatorname{DM}_{\alpha
_1+l_1,N}(N)}{\operatorname{DM}_{\alpha,N}(N)}\label{redxi1}
\\[-0.5pt]
&=&\xi_{ m}^{H,\alpha_1,|\alpha|-\alpha_1}(s_1, N)\label{redxi2}.\vspace*{-1pt}
\end{eqnarray}
Then (\ref{mhk}) follows immediately.\vspace*{-1pt}
%$$H_{ n}^{\alpha}(s,e_1 r)={h}_{ n}^{\circ(\alpha_1,|\alpha|-
%r).$$
%But
%$${h}_{ n}^{\circ(\alpha,\beta)}( r; r)=\frac{ r_{[ n]}}{\ifact{(|
%therefore, remembering (\ref{omega}),
%$$H_{ n}^{\alpha}(s,e_1 r)=\zeta_{ n}^{\alpha_1,|\alpha|-\alpha_1}
%the Lemma.
\end{pf}

%s3.1 ###
\subsection{Generalization of Gasper's product formula for Hahn polynomials}
For $d=2$ and $\alpha,\beta>0$ the Hahn polynomials\vspace*{-1pt}
%
%
%e3.13 ###
\begin{equation}
h^{\alpha,\beta}_{ n}(r;N)={}_{3}F_2\left(
\matrix{- n, n+\theta-1,-r\vspace*{2pt}\cr
\alpha,-N}
\bigg|  1\right), \qquad n=0,1,\ldots,N, \label{hahn}
\end{equation}
with $\theta=\alpha+\beta,$ have constant of orthogonality\vspace*{-1pt}
%
%
%e3.14 ###
\begin{equation}
\frac{1}{{u}^{\alpha,\beta}_{N,n}}:=\mbb{E}_{\alpha,\beta}
[h^{\alpha,\beta}_{n}(R;N)]^{2}=\frac{1}{{N\choose
n}}\frac{\ifact{(\theta+N)}{n}}{\ifact{(\theta)}{n-1}}\frac
{1}{\theta
+2n-1}\frac{\ifact{(\beta)}{n}}{\ifact{(\alpha)}{n}}.\label{un}
\end{equation}
%
%A special point value is (\cite{KMG61}, (1.15))
%h^{\alpha,\beta}_{n}(N;N)=(-1)^{n}\frac{\ifact{(\beta)}{n}}{\ifact{(
The following product formula was found by Gasper~\cite{GAS72}:\vspace*{-1pt}
%
%
%e3.15 ###
\begin{eqnarray}\label{gasper}
&& h^{\alpha,\beta}_{ n}(r;N)h^{\alpha,\beta}_{ n}(s;N)
\nonumber
\\[-7pt]
\\[-7pt]
\nonumber
&&\quad=
\frac{(-1)^{ n}\ifact{(\beta)}{ n}}{\ifact{(\alpha)}{ n}}
\sum_{l=0}^{ n}\sum_{k=0}^{ n-l}\frac{(-1)^{l+k} n_{[l+k]}\ifact
{(\theta
+ n-1)}{l+k}r_{[l]}s_{[l]}(N-r)_{[k]}(N-s)_{[k]}}
{l!k!N_{[l+k]}N_{[l+k]}\ifact{(\alpha)}{l}\ifact{(\beta
)}{k}}.\qquad
\end{eqnarray}
Thus\vspace*{-1pt}
%
%
%e3.16 ###
\begin{eqnarray}\label{n2}
&& {u}^{\alpha,\beta}_{N,n}h^{\alpha,\beta}_{ n}(r;N)h^{\alpha,\beta}_{
n}(s;N)\nonumber\\
&&\quad=\frac{N_{[ n]}}{(\theta+ N)_{( n)}}\sum_{ m=0}^{ n}\frac
{(-1)^{ n- m}
\ifact{(\theta)}{ n-1}\ifact{(\theta+ n-1)}{ m}(\theta+2 n-1)}{
m!( n-
m)!\ifact{(\theta)}{ m}}{\chi}^{H,\alpha,\beta}_{ m}(r,s)
\\
&&\quad=\frac{N_{[ n]}}{(\theta+ N)_{( n)}}\sum_{ m=0}^{ n}a_{ n
m}^{\theta
}{\chi}^{H,\alpha,\beta}_{ m}(r,s),
\nonumber
\end{eqnarray}
where
%
%
%e3.17 ###
\begin{equation}{\chi}^{H,\alpha,\beta}_{ m}(r,s):=\sum_{j=0}^{
m}\frac
{1}{\operatorname{DM}_{(\alpha,\beta), m}(j)}\biggl[\frac{{{ m}\choose
j}r_{[j]}(N-r)_{[ m-j]}}{N_{[ m]}}\biggr]\biggl[\frac{{{ m}\choose
j}s_{[j]}(N-s)_{[ m-j]}}{N_{[ m]}}\biggr].
\label{hatxi}
\end{equation}

By uniqueness of polynomial kernels, we can identify the
connection coefficients between the functions $\xi$ and $\chi$:
\begin{proposition}
For every $ m, n\in\mbb{Z}_+,$ and every $r,s\in\{0,\ldots,N\},$
%
%
%e3.18 ###
\begin{equation}
\xi_{ m}^{H,\alpha,\beta}(r,s)=\sum_{ l=0}^{ m}b_{ m l}\chi_{
l}^{H,\alpha,\beta}(r,s),
\label{xichi1}
\end{equation}
where
%
%
%e3.19 ###
\begin{equation}
b_{ m l}=\sum_{ n= l}^{ m}\biggl(\frac{N_{[ n]}}{(\theta+ N)_{(
n)}}\biggr)^2\frac{m_{[ n]}}{(\theta+ m)_{( n)}}a^{\theta}_{ n
l}.\label{ccoefxc}
\end{equation}

\end{proposition}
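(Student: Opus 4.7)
The plan is to equate two explicit expansions of the same object, namely the Hahn polynomial kernel $H^{\alpha,\beta}_{|n|}(r,s)$ in the case $d=2$ (where only a single polynomial of each degree exists), and then invert the resulting triangular relation between the $\xi^{H}$ and $\chi^{H}$ families.

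First, I would specialize Proposition~\ref{pr:KDM} to $d=2$ (with $|\alpha|=\theta$, $|r|=N$) to obtain the identity
\begin{equation*}
H^{\alpha,\beta}_{|n|}(r,s)=\frac{(\theta+N)_{(|n|)}}{N_{[|n|]}}\sum_{|m|=0}^{|n|}a^{\theta}_{|n||m|}\,\xi^{H,\alpha,\beta}_{|m|}(r,s).
\end{equation*}
On the other hand, since in the $d=2$ case the kernel sum collapses to a single product, the normalization (\ref{hahncp}) combined with (\ref{un}) yields
$H^{\alpha,\beta}_{|n|}(r,s)=u^{\alpha,\beta}_{N,n}\,h^{\alpha,\beta}_{|n|}(r;N)\,h^{\alpha,\beta}_{|n|}(s;N)$, and the displayed Gasper identity (\ref{n2}) then gives
\begin{equation*}
H^{\alpha,\beta}_{|n|}(r,s)=\frac{N_{[|n|]}}{(\theta+N)_{(|n|)}}\sum_{|m|=0}^{|n|}a^{\theta}_{|n||m|}\,\chi^{H,\alpha,\beta}_{|m|}(r,s).
\end{equation*}

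Equating the two expressions and rearranging produces the key identity
\begin{equation*}
\sum_{|m|=0}^{|n|}a^{\theta}_{|n||m|}\,\xi^{H,\alpha,\beta}_{|m|}(r,s)=\left(\frac{N_{[|n|]}}{(\theta+N)_{(|n|)}}\right)^{2}\sum_{|l|=0}^{|n|}a^{\theta}_{|n||l|}\,\chi^{H,\alpha,\beta}_{|l|}(r,s),
\end{equation*}
valid for every $|n|=0,1,2,\dots$. Viewed as an identity between (infinite) lower-triangular matrices applied to the column vectors $(\xi_{|m|})$ and $(\chi_{|l|})$, this reads $A\,\vec\xi = D\,A\,\vec\chi$, where $A_{|n|,|m|}=a^{\theta}_{|n||m|}$ and $D$ is diagonal with entry $\bigl(N_{[|n|]}/(\theta+N)_{(|n|)}\bigr)^{2}$.

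To conclude, I would invoke the inverse relation (\ref{inverse}) proved inside Proposition~\ref{pr:qn}, which identifies $A^{-1}$ as the triangular matrix with $(|m|,|n|)$-entry $|m|_{[|n|]}/(\theta+|m|)_{(|n|)}$. Applying $A^{-1}$ to both sides yields $\vec\xi=A^{-1}DA\,\vec\chi$, and reading off the $(|m|,|l|)$-entry of the product $A^{-1}DA$ gives exactly the stated formula for $b_{|m||l|}$, with the summation range $|l|\le|n|\le|m|$ dictated by the triangularity of $A$ and $A^{-1}$. The only real subtlety is bookkeeping: one must keep straight the difference between the orthonormal $h^{\circ}_{|n|}$ and the hypergeometric $h^{\alpha,\beta}_{|n|}$ when identifying the prefactor $(\theta+N)_{(|n|)}/N_{[|n|]}$ in the two representations, so that the square of $N_{[|n|]}/(\theta+N)_{(|n|)}$ appears correctly; once this is done, the matrix inversion is immediate.
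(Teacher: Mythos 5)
Your proposal is correct and follows essentially the same route as the paper: equate the $\xi$-expansion of $H^{\alpha,\beta}_{|n|}$ from Proposition \ref{pr:KDM} with the $\chi$-expansion coming from Gasper's formula (\ref{n2}), then apply the explicit inverse $c^{\theta}_{|m||n|}=|m|_{[|n|]}/(\theta+|m|)_{(|n|)}$ of the triangular array $(a^{\theta}_{|n||m|})$. Your matrix formulation $\vec\xi=A^{-1}DA\,\vec\chi$ is just a compact restatement of the paper's direct substitution, and the resulting entries match (\ref{ccoefxc}) exactly.
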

\begin{pf}
From (\ref{explicitDM}),
%
%
%e3.20 ###
\begin{equation}
{u}^{\alpha,\beta}_{N,n}h^{\alpha,\beta}_{ n}(r;N)h^{\alpha,\beta}_{
n}(s;N)=H^{\alpha,\beta}_{ n}(r,s)
=\frac{(\theta+ N)_{( n)}}{N_{[ n]}}\sum_{ m=0}^{ n}a_{ n m}^{\theta
}\xi_{ m}^{H,\alpha,\beta}(r,s).\label{n1}
\end{equation}
Since the array $A=(a_{ n m}^{\theta})$ has inverse
$C=A^{-1}$ with entries
%
%
%e3.21 ###
\begin{equation}c^{\theta}_{ m n}=\biggl(\frac{m_{[ n]}}{(\theta+ m)_{(
n)}}\biggr),
\label{cnm}
\end{equation}
then equating (\ref{n1}) and (\ref{n2}) leads to
\begin{eqnarray*}
\xi_{ m}^{H,\alpha,\beta}&=&\sum_{ n=0}^{ m}c^{\theta}_{ m n}\frac{N_{[
n]}}{(\theta+ N)_{( n)}}H_{ n}^{\alpha,\beta} \\[-2pt]
&=&\sum_{ n=0}^{ m}c^{\theta}_{ m n}\biggl(\frac{N_{[ n]}}{(\theta+
N)_{( n)}}\biggr)^2\sum_{ l=0}^{ n}a_{ n l}^{\theta}\chi_{ l}^{H,\alpha
,\beta} \\[-2pt]
&=&\sum_{ l=0}^{ m}b_{ m l}\chi_{ l}^{H,\alpha,\beta}.\vspace*{-2pt}
\end{eqnarray*}
\upqed\end{pf}

The following corollary is then straightforward.\vspace*{-2pt}
\begin{corollary}
\[
\mbb{E}[\xi_{ m}^{H,\alpha,\beta}\chi_{ l}^{H,\alpha,\beta}]=
\mbb{E}[\xi_{ l}^{H,\alpha,\beta}\chi_{ m}^{H,\alpha,\beta}
]=\sum_{ n=0}^{ m\wedge l}\frac{ m_{[ l]} l_{[ n]}}{(\theta+ m)_{(
n)}(\theta+ l)_{( n)}}.\vspace*{-2pt}
\]
\end{corollary}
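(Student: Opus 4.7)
The plan is to expand both $\xi^{H,\alpha,\beta}_{|m|}$ and $\chi^{H,\alpha,\beta}_{|l|}$ in the orthogonal basis of Hahn polynomial kernels $\{H^{\alpha,\beta}_{|n|}\}$ and exploit their orthogonality under the product measure $DM_{\alpha,\beta}(\cdot;N)\otimes DM_{\alpha,\beta}(\cdot;N)$. The corollary is then essentially a bookkeeping exercise once the right inversions are in place.

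First, I would invert the two triangular identities derived in the proof of the preceding proposition. From (\ref{n1}), applying $C = A^{-1}$ with entries $c^\theta_{|m||n|} = |m|_{[|n|]}/(\theta + |m|)_{(|n|)}$ gives
\begin{equation*}
\xi^{H,\alpha,\beta}_{|m|} \;=\; \sum_{|n|=0}^{|m|} c^\theta_{|m||n|}\,\frac{N_{[|n|]}}{(\theta+N)_{(|n|)}}\,H^{\alpha,\beta}_{|n|}.
\end{equation*}
From (\ref{n2}), the analogous inversion gives
\begin{equation*}
\chi^{H,\alpha,\beta}_{|l|} \;=\; \sum_{|n|=0}^{|l|} c^\theta_{|l||n|}\,\frac{(\theta+N)_{(|n|)}}{N_{[|n|]}}\,H^{\alpha,\beta}_{|n|}.
\end{equation*}
Note that the two expansions differ by exactly reciprocal $N$-dependent factors; this is the key structural feature that makes the computation work.

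Second, I would record the orthonormality of Hahn kernels under $R,S \stackrel{\text{iid}}{\sim} DM_{\alpha,\beta}(\cdot;N)$. Since in the $d=2$ setting $H^{\alpha,\beta}_{|n|}(r,s) = u^{\alpha,\beta}_{N,n}\,h^{\alpha,\beta}_{|n|}(r;N)\,h^{\alpha,\beta}_{|n|}(s;N)$, the orthogonality constant (\ref{un}) gives immediately
\begin{equation*}
\mbb{E}\left[H^{\alpha,\beta}_{|n|}(R,S)\,H^{\alpha,\beta}_{|n'|}(R,S)\right] \;=\; \delta_{|n||n'|}.
\end{equation*}

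Third, multiplying the two expansions and taking expectations, all cross terms vanish and the factors $N_{[|n|]}/(\theta+N)_{(|n|)}$ from $\xi$ cancel exactly against $(\theta+N)_{(|n|)}/N_{[|n|]}$ from $\chi$, leaving
\begin{equation*}
\mbb{E}\left[\xi^{H,\alpha,\beta}_{|m|}\,\chi^{H,\alpha,\beta}_{|l|}\right] \;=\; \sum_{|n|=0}^{|m|\wedge|l|} c^\theta_{|m||n|}\,c^\theta_{|l||n|} \;=\; \sum_{|n|=0}^{|m|\wedge|l|} \frac{|m|_{[|n|]}\,|l|_{[|n|]}}{(\theta+|m|)_{(|n|)}\,(\theta+|l|)_{(|n|)}}.
\end{equation*}
The expression is manifestly symmetric in $|m|$ and $|l|$, which yields the stated equality $\mbb{E}[\xi_{|m|}\chi_{|l|}]=\mbb{E}[\xi_{|l|}\chi_{|m|}]$. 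There is no real obstacle here; the only subtlety worth emphasizing is that the $N$-dependence disappears because $\xi$ and $\chi$ are, in a precise sense, \emph{dual} bases for the same filtration of polynomial spaces, their Hahn-kernel expansions being scaled by mutually reciprocal $N$-factors.
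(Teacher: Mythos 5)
Your proof is correct and is precisely the ``straightforward'' argument the paper has in mind: the two $H_{|n|}$-expansions you invert are exactly the ones displayed in the proof of the preceding proposition, and the mutually reciprocal $N$-factors together with $\mbb{E}\left[H^{\alpha,\beta}_{|n|}H^{\alpha,\beta}_{|n'|}\right]=\delta_{|n||n'|}$ give the result at once. Note that your computation also (correctly) indicates that the numerator in the paper's statement should read $|m|_{[|n|]}\,|l|_{[|n|]}$ rather than $|m|_{[|l|]}\,|l|_{[|n|]}$, which is evidently a typo since the asserted symmetry in $|m|$ and $|l|$ requires it.
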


For every $\mb{r}\in N\Delta_{(d-1)}$ and $\mb{m}\in\mbb
{Z}^d_{+}$, define
\[
p_{\mb{m}}(\mb{r})=\prod_{i=1}^{d}(r_i)_{[m_i]}.
\]
Gasper's product formula (\ref{gasper}), or, rather, the representation
(\ref{n2}), has a multivariate extension in the following.\vspace*{-2pt}

\begin{proposition}\label{vdmk}For every $d$, $\alpha\in\mbb{R}_{+}^d$
and $N\in\mbb{Z}_+,$ the Hahn polynomial kernels admit the following
representation:
%
%
%e3.22 ###
\begin{equation}H_{ n}^{\alpha}(\mb{r},\mb{s})=\frac{N_{[
n]}}{(|\alpha
| + N)_{( n)}}\sum_{ m=0}^{ n}a_{ n m}^{|\alpha|}{\chi}^{H,\alpha}_{
m}(\mb{r},\mb{s}),\qquad \mb{r},\mb{s}\in N\Delta
_{(d-1)},\
n=0,1,\ldots,
\label{n22}
\end{equation}
where
%
%
%e3.23 ###
\begin{equation}{\chi}^{H,\alpha}_{ m}(\mb{r},\mb{s}):=\sum_{\mb
{l}:|\mb
{l}|= m}\frac{1}{\operatorname{DM}_{\alpha, m}(\mb{l})}\biggl(\frac{{{ m}\choose\mb
{l}}p_{\mb{l}}(\mb{r})}{N_{[ m]}}\biggr)\biggl(\frac{{{ m}\choose{\mb
{l}}}p_{\mb{l}}(\mb{s})}{N_{[ m]}}\biggr).\vadjust{\goodbreak}
\label{hatxi2}
\end{equation}
\end{proposition}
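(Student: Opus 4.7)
My strategy is to interpret $\chi^{H,\alpha}_{|m|}(r,s)$ as the Lancaster density of an exchangeable pair $(R,S)$ with marginals $DM_{\alpha}(\cdot;N)$, diagonalise the associated Markov kernel through the Hahn system, and invert the resulting triangular relation using the matrix $(c^{|\alpha|}_{|m||n|})=A^{-1}$ from (\ref{cnm}).

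Using $\mathrm{Hyp}(l;r,|m|)=\binom{|m|}{l}p_{l}(r)/N_{[|m|]}$ and the Polya-urn consistency identity $\sum_{r}DM_{\alpha}(r;N)\mathrm{Hyp}(l;r,|m|)=DM_{\alpha}(l;|m|)$, the definition (\ref{hatxi2}) rewrites as
\begin{equation*}
DM_{\alpha}(r;N)\,DM_{\alpha}(s;N)\,\chi^{H,\alpha}_{|m|}(r,s)=\sum_{|l|=|m|}DM_{\alpha}(l;|m|)\,DM_{\alpha+l}(r-l;N-|m|)\,DM_{\alpha+l}(s-l;N-|m|).
\end{equation*}
Hence $\chi^{H,\alpha}_{|m|}$ is the joint density, with respect to $DM_{\alpha}(\cdot;N)^{\otimes 2}$, of the exchangeable pair $(R,S)$ obtained by sampling a latent $L\sim DM_{\alpha}(\cdot;|m|)$ and, conditionally on $L$, independently completing $L$ to size $N$ by two Polya-urn draws. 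By Lancaster's theorem we can therefore write
\begin{equation*}
\chi^{H,\alpha}_{|m|}(r,s)=\sum_{|n|=0}^{|m|}\lambda^{(|m|)}_{|n|}\,H^{\alpha}_{|n|}(r,s),
\end{equation*}
where the $\lambda^{(|m|)}_{|n|}$ are the canonical correlations of this coupling.

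To compute these correlations, introduce the down-sampling operator $(Df)(l)=\mbb{E}[f(R)\mid L=l]$ and its $L^{2}$-adjoint, the hypergeometric up-operator $(Ug)(r)=\sum_{l}\mathrm{Hyp}(l;r,|m|)g(l)$; the above coupling realises $T_{|m|}=UD$. The definition $h^{\circ}_{n}(r;N)=\int Q^{\circ}_{n}(x)D_{\alpha+r}(dx)$ from (\ref{transform}), together with the tower identity $\mbb{E}_{R\mid L=l}[D_{\alpha+R}(\cdot)]=D_{\alpha+l}(\cdot)$ (a direct consequence of the generative chain $X\sim D_{\alpha},\ L\mid X\sim\mathrm{Mult}(|m|,X),\ R-L\mid X,L\sim\mathrm{Mult}(N-|m|,X)$ and the sufficiency of $L$ for the Dirichlet posterior $X\mid L$), delivers the key intertwining
\begin{equation*}
D\,h^{\circ}_{n}(\cdot;N)=h^{\circ}_{n}(\cdot;|m|).
\end{equation*}
Expanding $Uh^{\circ}_{n}(\cdot;|m|)$ in the Hahn basis on $DM_{\alpha}(\cdot;N)$, the adjointness $\langle Uh^{\circ}_{n}(\cdot;|m|),h^{\circ}_{n'}(\cdot;N)\rangle_{DM_{\alpha}(\cdot;N)}=\langle h^{\circ}_{n}(\cdot;|m|),h^{\circ}_{n'}(\cdot;|m|)\rangle_{DM_{\alpha}(\cdot;|m|)}$ together with the orthogonality constants (\ref{hahncp}) selects a single multi-index and gives $Uh^{\circ}_{n}(\cdot;|m|)=\lambda^{(|m|)}_{|n|}h^{\circ}_{n}(\cdot;N)$ with
\begin{equation*}
\lambda^{(|m|)}_{|n|}=\frac{|m|_{[|n|]}(|\alpha|+N)_{(|n|)}}{N_{[|n|]}(|\alpha|+|m|)_{(|n|)}}=c^{|\alpha|}_{|m||n|}\cdot\frac{(|\alpha|+N)_{(|n|)}}{N_{[|n|]}};
\end{equation*}
since this depends only on $|n|$, the entire kernel $H^{\alpha}_{|n|}$ is an eigenspace of $T_{|m|}$, justifying the Lancaster expansion above.

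Finally, multiplying the Lancaster expansion by $a^{|\alpha|}_{|n||m|}$, summing over $|m|=0,\dots,|n|$ and using the inverse-matrix identity $\sum_{|m|}a^{|\alpha|}_{|n||m|}c^{|\alpha|}_{|m||k|}=\delta_{|n||k|}$ collapses the double sum to a single term, yielding
\begin{equation*}
\sum_{|m|=0}^{|n|}a^{|\alpha|}_{|n||m|}\chi^{H,\alpha}_{|m|}(r,s)=\frac{(|\alpha|+N)_{(|n|)}}{N_{[|n|]}}H^{\alpha}_{|n|}(r,s),
\end{equation*}
which rearranges to (\ref{n22}). The only step in which the dimension $d$ plays a non-trivial role is the Polya-tower intertwining $Dh^{\circ}_{n}(\cdot;N)=h^{\circ}_{n}(\cdot;|m|)$; once this identity is granted, the rest of the argument is purely linear-algebraic manipulation with the triangular matrices $A=(a^{|\alpha|}_{|n||m|})$ and $A^{-1}=(c^{|\alpha|}_{|m||n|})$ and is insensitive to $d$.
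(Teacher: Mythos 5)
Your argument is correct, and its skeleton coincides with the paper's: both proofs establish the expansion $\chi^{H,\alpha}_{|m|}=\sum_{|n|=0}^{|m|}\bigl(c^{|\alpha|}_{|m||n|}/c^{|\alpha|}_{N|n|}\bigr)H^{\alpha}_{|n|}$ and then invert the triangular system $A=(a^{|\alpha|}_{|n||m|})$. The genuine difference is how the coefficients are obtained. The paper computes $\mbb{E}\bigl[\chi^{H,\alpha}_{|m|}(R,S)h^{\circ}_{n}(R;N)h^{\circ}_{n}(S;N)\bigr]$ directly, resting on the moment identity $\mbb{E}\bigl[h^{\circ}_{n}(R;N)\binom{|m|}{l}p_{l}(R)\bigr]=N_{[|m|]}\,h^{\circ}_{n}(l;|m|)\,DM_{\alpha}(l;|m|)$, which it imports from \cite{GS09}, (5.71). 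You re-derive precisely this ingredient from first principles: the pointwise Polya factorization $DM_{\alpha}(r;N)\,\mathrm{Hyp}(l;r,|m|)=DM_{\alpha}(l;|m|)\,DM_{\alpha+l}(r-l;N-|m|)$ exhibits $\chi^{H,\alpha}_{|m|}$ as the density of a latent-variable coupling, and the posterior-predictive identity $\mbb{E}[D_{\alpha+R}(\cdot)\mid L=l]=D_{\alpha+l}(\cdot)$ yields the intertwining $Dh^{\circ}_{n}(\cdot;N)=h^{\circ}_{n}(\cdot;|m|)$, from which adjointness and the norms (\ref{hahncp}) give $\lambda^{(|m|)}_{|n|}=c^{|\alpha|}_{|m||n|}/c^{|\alpha|}_{N|n|}$. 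What your route buys is self-containedness and a transparent probabilistic explanation of why the canonical correlations depend on $n$ only through $|n|$, so that entire kernels $H^{\alpha}_{|n|}$ appear as eigenspaces; the paper's version is shorter on the page but leans on the external identity. Two minor presentational points: the displayed rewriting of (\ref{hatxi2}) requires the pointwise factorization just mentioned, not merely the summed consistency $\sum_{r}DM_{\alpha}(r;N)\mathrm{Hyp}(l;r,|m|)=DM_{\alpha}(l;|m|)$ that you quote; and the appeal to ``Lancaster's theorem'' comes before you have shown the eigenvalues are constant on each shell $\{|n|=\mathrm{const}\}$, though you supply that fact immediately afterwards, so nothing is actually missing.
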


\begin{pf}
If we prove that, for every $ m$ and $ n$,
\[
\chi_{ m}^{H,\alpha}(\mb{r},\mb{s})=\sum_{ n=0}^{ m}\frac{c_{ m
n}^{|\alpha|}}{c_{ N n}^{|\alpha|}}H^{\alpha}_{ n}(\mb{r},\mb{s}),
\]
where $c_{i j}^{|\alpha|}$ are given by (\ref{cnm}) (independent of
$d$!), then the proof follows by inversion.

Consider the orthonormal multivariate Jacobi polynomials
$Q^{\circ}_{n}(\mb{x}).$
%where $\{Q^{\circ}_{n}\}$ are orthonormal Jacobi and
%$$\zeta^{\alpha}_{n}=\prod_{j=1}^{d-1}\zeta_{n_j}^{\alpha_j,
%$$
The functions
\[
h^{\circ}_{\mb{n}}(\mb{r}; N):=\int_{\Delta_{(d-1)}}Q^{\circ
}_{\mb
{n}}(\mb{x})D_{\alpha+\mb{r}}(\mathrm{d}\mb{x})
\]
satisfy the identity
%
%
%e3.24 ###
\begin{equation}
\mbb{E}\left[h^\circ_{\mb{n}}(\mb{R} ; N)\pmatrix{{ m}\cr\mb{l}} p_{\mb
{l}}(\mb{R})\right]=N_{[ m]}h^\circ_{\mb{n}}(\mb{l} ; m)\operatorname{DM}_{\alpha,
m}(\mb{l} ),\qquad \mb{l}\in m\Delta_{(d-1)},\mb{n}\in\mbb
{Z}_+^{d}
\end{equation}
(\cite{GS10}, (5.71)).

Then for every fixed $\mb{s}$,
%
%
%e3.25 ###
\begin{equation}
\mbb{E}[\chi_{ m}^{H,\alpha}(\mb{R},\mb{s}) h^\circ_{\mb{n}}(\mb
{R}; N)]=\sum_{ l= m}\pmatrix{{ m}\cr\mb{l} } \frac{p_{\mb{l}}(\mb
{s})}{N_{[ m]}}h_{\mb{n}}^{\circ}(\mb{l};   m),\label{temph1}
\end{equation}
so, iterating the argument, we can write
%
%
%e3.26 ###
\begin{equation}
\mbb{E}[\chi_{ m}^{H,\alpha}(\mb{R},\mb{S})h^{\circ}_{\mb
{n}}(\mb
{R} ; N)h^{\circ}_{\mb{n}}(\mb{S} ; N)]={c_{ m n}}.
\end{equation}
Now, by uniqueness of the polynomial kernel,
\[
H^{\alpha}_{ n}(\mb{r},\mb{s})=\sum_{ n=0}^{\infty}\frac
{1}{c_{N,n}^{|\alpha|}}h^{\circ}_{\mb{n}}(r ; N)h^{\circ}_{\mb
{n}}(\mb
{s} ; N),
\]
therefore
\[
\chi_{ m}^{H,\alpha}(\mb{r},\mb{s})=\sum_{ n=0}^{ m}\frac{c_{ m
n}^{|\alpha|}}{c_{ N n}^{|\alpha|}}H^\alpha_{ n}(\mb{r},\mb{s}),
\]
and the proof is complete.
\end{pf}

 The connection coefficients between $\xi^{H,\alpha}_{ m}$ and
$\xi_{ m}^{\alpha}$ are, for every $d$, the same as for the
two-dimensional case:
\begin{corollary}For every $d$ and $\alpha\in\mbb{R}_+^d,$
\begin{enumerate}[(ii)]
\item[(i)]
%
%
%e3.27 ###
\begin{equation}
\xi_{ m}^{H,\alpha,\beta}(\mb{r},\mb{s})=\sum_{ l=0}^{ m}b_{ m
l}\chi_{
l}^{H,\alpha,\beta}(\mb{r},\mb{s}),
\label{xichid}
\end{equation}
where $(b_{ m l})$ are given by (\ref{ccoefxc}).
\item[(ii)]

\[
\mbb{E}[\xi_{ m}^{H,\alpha}\chi_{ l}^{H,\alpha}]=
\mbb{E}[\xi_{ l}^{H,\alpha}\chi_{ m}^{H,\alpha}]=\sum_{
n=0}^{ m\wedge l}\frac{ m_{[ l]} l_{[ n]}}{(|\alpha|+ m)_{(
n)}(|\alpha
|+ l)_{( n)}},\qquad  m, l=0,1,2,\ldots.
\]
\end{enumerate}
\end{corollary}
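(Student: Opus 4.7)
My plan is to derive both identities by combining the two complementary expansions of the Hahn polynomial kernel $H^\alpha_{|n|}(r,s)$ already established in this section. Proposition \ref{pr:KDM} gives the $\xi$-expansion
\[
H^\alpha_{|n|}(r,s) \;=\; \frac{(|\alpha|+N)_{(|n|)}}{N_{[|n|]}}\sum_{|m|=0}^{|n|} a^{|\alpha|}_{|n||m|}\,\xi^{H,\alpha}_{|m|}(r,s),
\]
while Proposition \ref{vdmk} gives the dual $\chi$-expansion, with the \emph{reciprocal} prefactor $N_{[|n|]}/(|\alpha|+N)_{(|n|)}$ and the same lower-triangular matrix $A=(a^{|\alpha|}_{|n||m|})$. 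The key structural observation is that both $A$ and its inverse $C=A^{-1}$, whose entries are $c^{|\alpha|}_{|m||n|} = |m|_{[|n|]}/(|\alpha|+|m|)_{(|n|)}$, depend only on the total degrees $|m|,|n|$ and on $|\alpha|$ --- \emph{not} on $d$. All dimensional content is packaged inside the kernels $\xi^{H,\alpha}_{|m|}$ and $\chi^{H,\alpha}_{|m|}$ themselves.

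For part (i), I would first invert the $\xi$-expansion via $C$ to write
\[
\xi^{H,\alpha}_{|m|}(r,s) \;=\; \sum_{|n|=0}^{|m|} c^{|\alpha|}_{|m||n|}\frac{N_{[|n|]}}{(|\alpha|+N)_{(|n|)}}\,H^\alpha_{|n|}(r,s),
\]
then substitute the $\chi$-expansion from Proposition \ref{vdmk} for each $H^\alpha_{|n|}$ and swap the order of summation to read off the coefficient of $\chi^{H,\alpha}_{|l|}$. What drops out is $b_{|m||l|}$ as given in (\ref{ccoefxc}), with $\theta = |\alpha|$; the computation is line-for-line identical to the two-dimensional derivation displayed just before the statement, precisely because $A$ and $C$ are dimension-free. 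For part (ii), I would expand both $\xi^{H,\alpha}_{|m|}$ and $\chi^{H,\alpha}_{|l|}$ simultaneously in the basis $\{H^\alpha_{|n|}\}$, one via the inverse of Proposition \ref{pr:KDM} and the other via the inverse of Proposition \ref{vdmk}. The two coefficient sets carry \emph{opposite} $N$-dependent prefactors, so when the product is formed these prefactors cancel exactly. Taking the expectation under $DM_\alpha\otimes DM_\alpha$ and using the reproducing-kernel orthogonality of $\{H^\alpha_{|n|}\}$ to kill every cross-degree term leaves $\sum_{|n|=0}^{|m|\wedge|l|} c^{|\alpha|}_{|m||n|}\,c^{|\alpha|}_{|l||n|}$, which, after substituting $c^{|\alpha|}_{|m||n|}= |m|_{[|n|]}/(|\alpha|+|m|)_{(|n|)}$, is the stated identity; symmetry in $|m|,|l|$ is manifest from this form.

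The only genuinely delicate step is the normalization check in (ii): one must verify that the reproducing-kernel orthogonality of $H^\alpha_{|n|}$ under $DM_\alpha\otimes DM_\alpha$ yields exactly the factor that cancels the surviving $N$-dependence of the product of $A$- and $B$-coefficients. This reduces to the orthogonality constant (\ref{hahncp}) of the scalar polynomials $h^\circ_n$ used to build the kernel, and mirrors the analogous two-dimensional step which produces the 2D version of the corollary displayed earlier. Once that check is in place, no ingredient specific to $d>2$ is needed: both (i) and (ii) follow by pure algebra from Propositions \ref{pr:KDM} and \ref{vdmk}.
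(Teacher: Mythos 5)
Your derivation of part (i) is correct and is exactly the route the paper takes (the paper proves the $d=2$ case this way and then observes that nothing changes for general $d$): invert the $\xi$-expansion of Proposition \ref{pr:KDM}, substitute the $\chi$-expansion of Proposition \ref{vdmk}, and swap sums; since $A$ and $C=A^{-1}$ depend only on $|m|,|n|,|\alpha|$, the coefficients $b_{|m||l|}$ of (\ref{ccoefxc}) come out unchanged.

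For part (ii), however, the step you defer as ``the only genuinely delicate step'' is precisely the step that fails for $d>2$, so your argument has a genuine gap. Writing $H^{\alpha}_{|n|}(r,s)=c_{|N||n|}^{-1}\sum_{\{n:|n|\,\mathrm{fixed}\}}h^\circ_n(r)h^\circ_n(s)$ with $c_{|N||n|}=N_{[|n|]}/(|\alpha|+N)_{(|n|)}$ and using (\ref{hahncp}), one gets for $R,S$ i.i.d.\ $DM_\alpha(\cdot;N)$
\begin{equation*}
\mbb{E}\left[H^{\alpha}_{|n|}(R,S)H^{\alpha}_{|n'|}(R,S)\right]
=\delta_{|n||n'|}\,c_{|N||n|}^{-2}\sum_{\{n:|n|\,\mathrm{fixed}\}}\left(\mbb{E}\left[h^\circ_n(R)^2\right]\right)^2\cdot c_{|N||n|}^{0}
=\delta_{|n||n'|}\,M_{|n|},
\end{equation*}
where $M_{|n|}$ is the \emph{number} of multi-indices of total degree $|n|$, not $1$. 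The opposite $N$-prefactors in the two expansions do cancel, as you say, but what survives is $\sum_{|n|=0}^{|m|\wedge|l|}c^{|\alpha|}_{|m||n|}c^{|\alpha|}_{|l||n|}M_{|n|}$, and $M_{|n|}>1$ whenever $d>2$ and $|n|\geq 1$. A direct check with $|m|=|l|=1$ confirms this: one computes $\mbb{E}[\xi^{H,\alpha}_{1}\chi^{H,\alpha}_{1}]=1+(d-1)/(|\alpha|+1)^2$, which agrees with the displayed formula only when $d=2$. So your method cannot deliver the stated identity for general $d$ as written; it delivers it only in the two-dimensional case, where each total degree carries a single polynomial and $M_{|n|}=1$. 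To salvage a dimension-free statement one must either keep the multiplicity factors $M_{|n|}$ explicitly or interpret the expectation as the one-variable reproducing pairing $\mbb{E}_S[\xi^{H,\alpha}_{|m|}(r,S)\chi^{H,\alpha}_{|l|}(s,S)]$, which produces $\sum_{|n|}c^{|\alpha|}_{|m||n|}c^{|\alpha|}_{|l||n|}H^{\alpha}_{|n|}(r,s)$ rather than a scalar; in either case the normalization check you postponed needs to be carried out, and as stated it does not close.
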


%s3.2 ###
\subsection{Polynomial kernels on the hypergeometric distribution}
Note that there is a direct alternative proof of orthogonality of
$H^{\alpha}_{ n}(\mb{r},\mb{s})$ similar to that for $Q^{\alpha}_{
n}(\mb{x},\mb{y})$. In the
Hahn analogous proof, orthogonality does not depend on the fact that
$|\alpha| >0$. In particular, we obtain \textit{kernels on the
hypergeometric distribution},
%
%
%e3.28 ###
\begin{equation}
\frac{{c_1 \choose r_1}\cdots{c_d\choose r_d}}{{|c| \choose r}}
\end{equation}
by replacing $\alpha$ by $-c$ in (\ref{explicitDM}) and (\ref{hxi}).
Again a direct proof similar to that for $Q^{\alpha}_{ n}(\mb{x},\mb
{y})$ would be
possible.

%s4 ###
\section{Symmetric kernels on ranked Dirichlet and Poisson--Dirichlet
measures} \label{sec:kPD}

From Dirichlet--Jacobi polynomial kernels we can also derive polynomial
kernels orthogonal with respect to symmetrized Dirichlet measures. Let
$D_{|\alpha|,d}$ be the Dirichlet distribution on
$d$ points with symmetric parameters
$(|\alpha|/d,\ldots,|\alpha|/d),$ and $D_{|\alpha|,d}^{\downarrow
}$ its
ranked version. Denote with $Q^{(|\alpha|,d)}_{ n}$ and $Q^{(|\alpha
|,d)\downarrow}_{ n}$ the corresponding $ n$-kernels.
\begin{proposition}
\label{pr:rjk}
\[
Q^{(|\alpha|,d)\downarrow}_{ n} = (d!)^{-1}\sum_\sigma Q^{(|\alpha
|,d)}_{ n}(\sigma(\mb{x}),\mb{y}),
\]
where summation is over all permutations $\sigma$ of $1,\ldots,d$. The
kernel polynomials have a
similar form to $Q^{(|\alpha|,d)}_{ n}$, but with $\xi^{(|\alpha
|,d)}_m$ replaced by
%
%
%e4.2 ###
%e4.1 ###
\begin{eqnarray}
\xi^{(|\alpha|,d)\downarrow}_{ m} &=& \sum_{\mb{l}\in m\Delta
^{\downarrow}_{(d-1)}} \frac{ m!|\theta|_{(m)}
(d-k)!(\prod_1^m \beta_ i(\mb{l})!)[\mb{x};\mb{l}][\mb{y};\mb
{l}] } { d! \prod_1^m
[j!(|\theta|/d)_{(j)}]^{\beta_j(\mb{l})} }\\
&=&\sum_{\mb{l}\in m\Delta^{\downarrow}_{(d-1)}}\frac{\sharp(\mb
{l})[\mb{x};\mb{l}]\sharp(\mb{l})[\mb{y};\mb{l}]}{\operatorname{DM}_{|\alpha|,
m,d}^{\downarrow}(\mb{l})}\label{symxi},
\end{eqnarray}
where
\[
\sharp(\mb{l}):=\pmatrix{ l\cr{\mb{l}}}\frac{1}{\prod_{i\geq1}\beta
_{i}(\mb{l})!}.
\]
%
%where summation is over all $ k=0,\ldots, m$ and all $l\in\mbb{Z}_+$
%such that $ l= m$ with $k$ nonzero components, and $b_j=b_j(l)$ is the
%number of components of $l$ equal to $j,\ j=1,\ldots, m.$
\end{proposition}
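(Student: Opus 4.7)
The plan is to first establish the symmetrization identity by a general reproducing-kernel argument, then to derive the explicit form by expanding $Q^{(|\alpha|,d)}_{|n|}$ via Proposition \ref{pr:qn} and symmetrizing each $\xi^{(|\alpha|,d)}_{|m|}$ in $x$.

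First I observe that since $D^\downarrow_{|\alpha|,d}$ is the pushforward of $D_{|\alpha|,d}$ under the ranking map $\psi$, any function $f$ on $\Delta^\downarrow_{(d-1)}$ lifts uniquely to an $S_d$-symmetric function $\tilde f$ on $\Delta_{(d-1)}$ and inner products match: $\mbb{E}_{D^\downarrow}[f(X^\downarrow) g(X^\downarrow)]=\mbb{E}_{D}[\tilde f(X)\tilde g(X)]$. Hence the space of degree-$|n|$ orthogonal polynomials in $L^2(D^\downarrow_{|\alpha|,d})$ is identified with $V^{\mathrm{sym}}_{|n|}\subset L^2(D_{|\alpha|,d})$, the subspace of $S_d$-invariant polynomials of degree $|n|$ orthogonal to all symmetric polynomials of lower degree. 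Because $D_{|\alpha|,d}$ is $S_d$-symmetric, $\langle p,q\rangle_D=\langle p,q^{\mathrm{sym}}\rangle_D$ for every symmetric $p$ and every $q$, so orthogonality to all symmetric polynomials of degree less than $|n|$ upgrades to orthogonality to \emph{every} polynomial of that lower degree. Consequently $V^{\mathrm{sym}}_{|n|} = V_{|n|}\cap L^2(D_{|\alpha|,d})^{S_d}$, and its reproducing kernel, pulled back to $\Delta_{(d-1)}\times\Delta_{(d-1)}$, is exactly $Q^{(|\alpha|,d)\downarrow}_{|n|}$.

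Next I invoke the standard reproducing-kernel fact: if $V\subset L^2(\mu)$ has reproducing kernel $K$ and a finite group $G$ acts unitarily on $L^2(\mu)$ leaving $V$ invariant, then the reproducing kernel of $V^G:=V\cap L^2(\mu)^G$ is $K^G(x,y) = |G|^{-1}\sum_{\sigma\in G}K(\sigma x,y)$. This follows from the reproducing property together with the fact that $f\mapsto |G|^{-1}\sum_\sigma f\circ\sigma$ is the orthogonal projection onto $L^2(\mu)^G$. Applied to $G=S_d$ and $K=Q^{(|\alpha|,d)}_{|n|}$, this yields the first displayed identity of the proposition. The diagonal invariance $Q^{(|\alpha|,d)}_{|n|}(\sigma x,\sigma y) = Q^{(|\alpha|,d)}_{|n|}(x,y)$, inherited from $S_d$-invariance of $V_{|n|}$, further implies that symmetrization in $x$ and in $y$ give the same result and that $Q^{(|\alpha|,d)\downarrow}_{|n|}$ is automatically symmetric in both arguments.

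For the explicit form I substitute $Q^{(|\alpha|,d)}_{|n|} = \sum_{|m|=0}^{|n|}a^{|\alpha|}_{|n||m|}\xi^{(|\alpha|,d)}_{|m|}$ from Proposition \ref{pr:qn} and define $\xi^{(|\alpha|,d)\downarrow}_{|m|}$ as the $x$-symmetrization of $\xi^{(|\alpha|,d)}_{|m|}$. Using (\ref{xin}), only the factor $\prod_i x_{\sigma(i)}^{l_i}$ depends on $\sigma$; swapping sums and regrouping the sum over multi-indices $l$ with $|l|=|m|$ according to their $S_d$-orbit, represented by the ranked vector $\lambda:=l^\downarrow\in|m|\Delta^\downarrow_{(d-1)}$ with $k=k(\lambda)$ positive entries and frequencies $\beta_j(\lambda)$, the orbit-stabilizer theorem gives
\begin{equation*}
(d!)^{-1}\sum_{\sigma\in S_d}\prod_{i=1}^{d}x_{\sigma(i)}^{l_i} \;=\; \frac{(d-k(\lambda))!}{d!}\,[x,\lambda]_d, \qquad \sum_{l:\,l^\downarrow=\lambda}\prod_{i=1}^{d}y_i^{l_i} \;=\; \frac{[y,\lambda]_d}{\prod_{j\geq 1}\beta_j(\lambda)!}.
\end{equation*}
Combining these with $\prod_i l^\downarrow_i! = \prod_{j\geq 1}(j!)^{\beta_j(\lambda)}$ and $\prod_i (|\alpha|/d)_{(l^\downarrow_i)} = \prod_{j\geq 1}((|\alpha|/d)_{(j)})^{\beta_j(\lambda)}$ produces the claimed closed form for $\xi^{(|\alpha|,d)\downarrow}_{|m|}$; the equivalent expression (\ref{symxi}) in terms of $\sharp(l)$ and $DM^\downarrow_{|\alpha|,d}(l;|m|)$ is then a direct consequence of (\ref{formrdm}).

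The main obstacle is the combinatorial bookkeeping in the last step: one must carefully disentangle the $(d-k(\lambda))!\prod_{j\geq 1}\beta_j(\lambda)!$ stabilizer factor of the $S_d$-action on multi-indices from the $\prod_{j\geq 1}\beta_j(\lambda)!$ repetition factor counting how many ordered $k$-tuples of distinct indices in the definition of $[y,\lambda]_d$ yield each distinct monomial, so that the two combine correctly into the denominator of the final expansion and match the form of the ranked Dirichlet-Multinomial weights.
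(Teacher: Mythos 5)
Your proof is correct and follows essentially the same route as the paper: both symmetrize the kernel over $S_d$ and then regroup the sum over multi-indices $l$ into $S_d$-orbits to obtain (\ref{symxi}); the only difference is that you justify that the symmetrized kernel is the kernel for $D^{\downarrow}_{|\alpha|,d}$ via the general projection lemma for reproducing kernels of invariant subspaces, whereas the paper verifies the reproducing identity $\mbb{E}\left[Q^{(|\alpha|,d)\downarrow}_{|n|}(x,Y)Q^{(|\alpha|,d)\downarrow}_{|m|}(z,Y)\right]=\delta_{|n||m|}Q^{(|\alpha|,d)\downarrow}_{|n|}(x,z)$ by direct computation. Your orbit-stabilizer bookkeeping is right and lands exactly on (\ref{symxi}) consistently with (\ref{formrdm}); it also confirms that the factor $\prod_j\beta_j(l)!$ appearing in the numerator of the paper's first displayed expression for $\xi^{(|\alpha|,d)\downarrow}_{|m|}$ should in fact sit in the denominator.
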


\begin{pf}
Note that
%
%
%e4.4 ###
%e4.3 ###
\begin{eqnarray}
Q^{(|\alpha|,d)\downarrow}_{ n}(\mb{x},\mb{y})&=&\frac{1}{d!}\sum
_{\sigma\in\mathcal{G}_d}{Q}^{(|\alpha|,d)}_{ n}(\sigma
\mb{x},\mb{y})\nonumber\\
&=&\frac{1}{d!}\sum_{\sigma\in\mathcal{G}_d}\sum_{ m\leq n}a_{ n
m}^{|\alpha|}\xi^{(|\alpha|,d)}_{ m}(\sigma
\mb{x},\mb{y})\nonumber\\
&=&d!\sum_{ m\leq n}a_{ n m}^{|\alpha|}\frac{1}{(d!)^2}\sum_{\sigma
\in
\mathcal{G}_d}\sum_{|\mb{l}|= m}\frac{{ m\choose
\mb{l}}^2(\sigma\mb{x})^{\mb{l}}\mb{y}^{\mb{l}}}{\operatorname{DM}_{|\alpha|,
m,d}(\mb
{l})}\nonumber\\
&=&\sum_{ m\leq n}a_{ n m}^{|\alpha|}\frac{1}{(d!)^2}\sum_{\sigma
,\tau
\in\mathcal{G}_d}\sum_{|\mb{l}|= m}\frac{{ m\choose
\mb{l}}^2(\sigma\tau\mb{x})^{\mb{l}}(\mb{y})^{\mb
{l}}}{\operatorname{DM}_{|\alpha|,
m}(\mb{l})}\nonumber\\
&=&\sum_{ m\leq n}a_{ n m}^{|\alpha|}\frac{1}{(d!)^2}\sum_{\sigma
,\tau
\in\mathcal{G}_d}\sum_{|\mb{l}|= m}\frac{{ m\choose
\mb{l}}^2(\sigma\mb{x})^{\mb{l}}(\tau\mb{y})^{\mb
{l}}}{\operatorname{DM}_{|\alpha|,
m,d}(\mb{l})}\label{sym0}\\
&=&\frac{1}{(d!)^2}\sum_{\sigma,\tau\in\mathcal
{G}_d}{Q}^{(|\alpha
|,d)}_{ n}(\sigma
\mb{x},\tau
\mb{y})\label{sym1}.
\end{eqnarray}
Now,
%
%
%e4.5 ###
\begin{eqnarray}
\mbb{E}_{D_{(|\alpha|,d)}^{\downarrow}}\bigl[Q^{(|\alpha|,d)\downarrow
}_{ n}(\mb{x},\mb{Y})Q^{(|\alpha|,d)\downarrow}_{ m}(\mb{z},\mb
{Y})\bigr]%&=&
%x,Y){Q}^{(|\alpha|,d)}_{ m}(\tau
%z,Y)] \\
&=&\frac{1}{d!}\sum_{\sigma\in\mathcal{G}_d}{Q}^{(|\alpha|,d)}_{
n}(\sigma
\mb{x},\mb{z})\delta_{ n m}
\nonumber
\\[-8pt]
\\[-8pt]
\nonumber
&=&Q^{(|\alpha|,d)\downarrow}_{ n}(\mb{x},\mb{z})\delta_{ n m},
\end{eqnarray}
and hence $Q^{(|\alpha|,d)\downarrow}_{ n}$ is the $ n$ polynomial
kernel with respect to $D_{(|\alpha|,d)}^{\downarrow}.$\vspace*{1pt}
The second part of the theorem, involving identity (\ref{symxi}), is
just another way of rewriting (\ref{sym0}).
\end{pf}

\begin{remark}The
first polynomial is $Q^{(|\alpha|,d)\downarrow}_{1}\equiv0$.
\end{remark}

%s4.1 ###
\subsection{Infinite-dimensional limit} As $d \to\infty$, $\xi
^{(|\alpha|,d)\downarrow}_{ m} \to\xi^{(|\alpha|,\infty
)\downarrow}_{
m}$, with
%
%
%e4.7 ###
%e4.6 ###
\begin{eqnarray}
\xi^{(|\alpha|,\infty)\downarrow}_{ m} &=& |\alpha|_{( m)} \sum
\frac{
m!(\prod_1^m
b_ i!)[\mb{x};\mb{l}][\mb{y};\mb{l}]} {|\alpha
|^k[0!1!]^{b_1}\cdots
[(k-1)!k!]^{b_k}}\\
&=& \sum\frac{\sharp(\mb{l})[\mb{x};\mb{l}]{{ m}\choose{\mb
{l}}}\sharp
(\mb{l})[\mb{y};\mb{l}]}{\operatorname{ESF}_{|\alpha|}(\mb{l})}.\label{xiinf}
\end{eqnarray}

\begin{proposition}
\label{pr:pdk}
The $ n$-polynomial kernel with respect to the Poisson--Dirichlet point
process is given by
%
%
%e4.8 ###
\begin{equation}
Q^{(|\alpha|,\infty)\downarrow}_{ n} =\sum_{ m=0}^{ n}a_{ n
m}^{|\alpha
|}\xi^{(|\alpha|,\infty)\downarrow}_{ m}. \label{PDk}
\end{equation}
\end{proposition}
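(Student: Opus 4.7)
The plan is to derive (\ref{PDk}) as the $d\to\infty$ limit of the identity provided by Proposition \ref{pr:rjk}. The coefficients $a_{|n||m|}^{|\alpha|}$ in (\ref{anm}) are independent of $d$, so the task reduces to two steps: (a) identifying the pointwise limit of $\xi^{(|\alpha|,d)\downarrow}_{|m|}$ with the function $\xi^{(|\alpha|,\infty)\downarrow}_{|m|}$ displayed in (\ref{xiinf}); and (b) verifying that the resulting symmetric function $Q^{(|\alpha|,\infty)\downarrow}_{|n|}$ is the $|n|$-orthogonal polynomial kernel for the Poisson-Dirichlet distribution.

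For (a), I would use the compact representation (\ref{symxi}) to regroup the sum over $l\in|m|\Delta^\downarrow_{(d-1)}$ according to the partition type $\beta(l)$; the set of such types depends only on $|m|$, not on $d$. The denominator $DM^\downarrow_{|\alpha|,d}(l;|m|)$ converges termwise to $ESF_{|\alpha|}(l)$ by (\ref{esf}), while $\sharp(l)[x;l]$ is intrinsic once $x$ is embedded in $\Delta^\downarrow_{\infty}$, so the limit is precisely the expression (\ref{xiinf}).

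For (b), the orthogonality relation
$$\mbb{E}_{D^\downarrow_{|\alpha|,d}}\!\left[Q^{(|\alpha|,d)\downarrow}_{|n|}(x,Y)\,Q^{(|\alpha|,d)\downarrow}_{|k|}(z,Y)\right]=Q^{(|\alpha|,d)\downarrow}_{|n|}(x,z)\,\delta_{|n||k|}$$
established in Proposition \ref{pr:rjk} is transferred to $PD_{|\alpha|}$ via the weak convergence $D^\downarrow_{|\alpha|,d}\Rightarrow PD_{|\alpha|}$ recorded earlier. Combined with the pointwise convergence of $Q^{(|\alpha|,d)\downarrow}_{|n|}$ from step (a), this yields
$$\mbb{E}_{PD_{|\alpha|}}\!\left[Q^{(|\alpha|,\infty)\downarrow}_{|n|}(x,Y)\,Q^{(|\alpha|,\infty)\downarrow}_{|k|}(z,Y)\right]=Q^{(|\alpha|,\infty)\downarrow}_{|n|}(x,z)\,\delta_{|n||k|},$$
and since $Q^{(|\alpha|,\infty)\downarrow}_{|n|}$ is by construction a symmetric polynomial of total degree $|n|$ built from $\xi^{(|\alpha|,\infty)\downarrow}_{|m|}$ with $|m|\leq|n|$, it satisfies all defining properties of an $|n|$-polynomial kernel. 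The triangular invertibility of $(a^{|\alpha|}_{|n||m|})$, already used in Proposition \ref{pr:qn}, then fixes (\ref{PDk}) uniquely.

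The main obstacle is justifying the interchange of limit and expectation in step (b). Although each $[x;l]$ remains well defined on $\Delta^\downarrow_{\infty}$, the index set $|m|\Delta^\downarrow_{(d-1)}$ grows with $d$ even after regrouping by partition type. This is controlled by the uniform bound $|[x;l]|\leq \binom{|l|}{l_1,\ldots,l_{k(l)}}$, which holds for every $x\in\Delta^\downarrow_{\infty}$ because $\sum_i x_i\leq 1$; combined with the analogous bound on $[y;l]$ and on $\sharp(l)$, this yields a dominating function for $\xi^{(|\alpha|,d)\downarrow}_{|m|}(x,y)$ depending only on $|m|$ and $|\alpha|$. Bounded convergence then secures all the required interchanges and confirms that (\ref{PDk}) holds.
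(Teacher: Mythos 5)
Your proposal is correct and follows exactly the route the paper intends: the paper states Proposition \ref{pr:pdk} immediately after recording the termwise limit $\xi^{(|\alpha|,d)\downarrow}_{|m|}\to\xi^{(|\alpha|,\infty)\downarrow}_{|m|}$ and the weak convergence $D^{\downarrow}_{|\alpha|,d}\Rightarrow PD_{|\alpha|}$, leaving the passage to the limit of Proposition \ref{pr:rjk} implicit, which is precisely what you carry out (and justify in more detail than the paper does). One small remark: after regrouping by partition type the index set is just the set of partitions of $|m|$, which is finite and stabilizes once $d\geq|m|$, so the domination argument for step (a) is even easier than you suggest; your bounds are still needed, as you say, only for the interchange of limit and expectation in step (b).
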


%where summation is over $\sum_1^mj\alpha(j)= m$, $\sum_1^m\alpha(j)
%= k$, $k=1,\ldots,m$.

The first polynomial is zero, and the second polynomial is
\[
Q_2^\infty= (F_1 - \mu)(F_2 - \mu)/\sigma^2,
\]
where
\[
F_1 = \sum_1^\infty x_{(i)}^2, \qquad F_2 = \sum_1^\infty y_{(i)}^2,
\]
and
\[
\mu= \frac{1}{1+|\alpha|},\qquad \sigma^2 = \frac{2|\alpha
|}{(|\alpha
|+3)(|\alpha|+2)(|\alpha|+1)^2}.
\]

%weight measure given by the distribution ${D}_{|\alpha|,d}^{
%need to observe a uniqueness property of general polynomial
%kernels.
%
%assume that $\mbb{P}=\{P_{\mb{n}}:n\in\mbb{N}^{d}\}$ is a system of
%polynomials such that
%$$P_{ n}(\mb{x},\mb{y})=\sum_{\{ |\mb{m}|= n\}}P_{\mb{m}}(\mb{x})P_{
%forms a complete orthogonal kernel system with respect to $\pi$. \\
%Then $\mbb{P}$ is a system of orthonormal polynomials with respect
%to $\pi$.
%
%Because $P_{ n}(\mb{x},\mb{y})$ is an orthogonal kernel, we have
%$$\mbb{E}[P_{ k}(\mb{x},\mb{Y})P_{ l}(\mb{z},\mb{Y})]=
%This can be written as
%which shows that for every $x$
%$$\mbb{E}[P_{\mb{m}}(\mb{Y})P_{ l}(\mb{x},\mb{Y})]=P_{
%That is,
%P_{\mb{k}}(\mb{x})=\sum_{|\mb{m}|= k}c_{\mb{k}\mb{m}}P_{\mb{m}}(
%In particular (\ref{selfbio}) is zero if $l<m.$
%Equation (\ref{selfbio}) can only hold if
%$$\mbb{E}[P_{\mb{k}}(\mb{Y})P_{\mb{m}}(\mb{Y})]=\delta_{
%
%respect to the symmetric measure ${D}^{\downarrow}_{|\alpha|,d}.$
%
%Let $\{P^{(|\alpha|,d)}_{n}\}$ be an orthonormal system of polynomials
%with respect to $D_{|\alpha|,d}$. Then the system
%$\{P^{(|\alpha|,d)\downarrow}_{n}\}$ defined by
%P^{(|\alpha|,d)\downarrow}_{\mb{n}}(\mb{x})=\frac{1}{d!}\sum_{\sigma\in
%where $\mathcal{G}_d$ is the group of all permutations of
%$\{1,\ldots,d\},$ is orthonormal with respect to
%${D}^{\downarrow}_{|\alpha|,d}.$
%
%In Section~\ref{sec:kPD} we have shown that
%the polynomial kernel relative to ${D}^{\downarrow}_{|\alpha|,d}$ is
%of the form
%{Q}^{(|\alpha|,d)\downarrow}_{ n}(\mb{x},\mb{y})=\frac{1}{(d!)^2}\sum_{
%The proof is then completed by Lemma~\ref{l:selfbio}.
%

%s4.2 ###
\subsection{Kernel polynomials on the Ewens sampling
distribution}\label{sec:esfk}

The Ewens sampling distribution can be obtained as a
limit distribution from the unordered Dirichlet multinomial
distribution $\operatorname{DM}^{\downarrow}_{|\alpha|,N,d}$ as $d\to\infty$. The
proof of the following proposition can be obtained by the same
arguments used to prove Proposition~\ref{pr:rjk}.\looseness=1

\begin{proposition}
\label{pr:rhk}
\begin{enumerate}[(ii)]
\item[(i)] The polynomial kernels with respect to
$\operatorname{DM}^{\downarrow}_{|\alpha|,N,d}$ are of the same form as (\ref
{explicitDM}), but with $\xi_{ m}^{H,(|\alpha|,d)}$ replaced by
%
%
%e4.9 ###
\begin{equation}
\xi_{ m}^{H,(|\alpha|,d)\downarrow}:=(d!)^{-1}\sum_\pi\xi_{
m}^{H,(|\alpha|,d)}(\pi(\mb{r}),\mb{s}).
\label{dhsymk}
\end{equation}
\item[(ii)] The kernel polynomials with respect to $\operatorname{ESF}_{|\alpha|}$ are
derived by considering the limit form $\xi_{ m}^{H,|\alpha|\downarrow}$
of $\xi_{ m}^{H,(|\alpha|,d)\downarrow}.$
This has the same
form as $\xi_{ m}^{|\alpha|\downarrow}$ (\ref{xiinf}) with $[\mb
{x};\mb
{b}][\mb{y};\mb{b}]$
replaced by $[\mb{r};\mb{b}]^\prime[\mb{s};\mb{b}]^\prime$, where
\[
[\mb{r};\mb{b}]^\prime= (|\alpha|+|\mb{r}|)^{-1}_{( m)} \sum
{r_{i_1}}_{(l_1)}\cdots{r_{i_k}}_{(l_k)},
\]
and summation is over $\sum_1^{ m}jb_j= m$, $\sum_1^{ m}b_j = k$,
$k=1,\ldots, m$. The kernel polynomials have the same form as
(\ref{explicitDM}) with $\xi^{H,(|\alpha|,d)}_{ m}$ replaced by
$\xi_{ m}^{H,|\alpha|\downarrow}$. The first polynomial is
identically zero
under this symmetrization.
\end{enumerate}
\end{proposition}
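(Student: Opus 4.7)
My plan mirrors the proof of Proposition \ref{pr:rjk}, with an additional $d\to\infty$ limiting step for part (ii).

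For (i), I would start from the expansion (\ref{explicitDM}) of $H^{(|\alpha|,d)}_{|n|}(r,s)$ and define
\[
H^{(|\alpha|,d)\downarrow}_{|n|}(r,s):=\frac{1}{d!}\sum_{\sigma\in\mathcal{G}_d}H^{(|\alpha|,d)}_{|n|}(\sigma r,s).
\]
Because $DM_{|\alpha|,d}(l;|m|)$ is symmetric under permutations of $l$, the permutation of $r$ inside $\xi^{H,(|\alpha|,d)}_{|m|}(\sigma r,s)$ can be absorbed by a second average over permutations of $s$, exactly as in (\ref{sym0})--(\ref{sym1}), yielding
\[
H^{(|\alpha|,d)\downarrow}_{|n|}(r,s)=\frac{1}{(d!)^2}\sum_{\sigma,\tau\in\mathcal{G}_d}H^{(|\alpha|,d)}_{|n|}(\sigma r,\tau s),
\]
which is jointly symmetric. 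Orthogonality with respect to $DM^{\downarrow}_{|\alpha|,d}$ then follows from one application of the kernel identity for $H^{(|\alpha|,d)}_{|n|}$ together with the obvious fact that $\mbb{E}_{DM^{\downarrow}_{|\alpha|,d}}[f(S)]=\mbb{E}_{DM_{|\alpha|,d}}\bigl[(d!)^{-1}\sum_\sigma f(\sigma S)\bigr]$. The explicit form (\ref{dhsymk}) of $\xi^{H,(|\alpha|,d)\downarrow}_{|m|}$ is obtained by regrouping the sum over $l$ with $|l|=|m|$ by orbit type under $\mathcal{G}_d$, producing the combinatorial factor $\prod_j \beta_j(l^\downarrow)!$ and restricting the summation index to $|m|\Delta^\downarrow_{(d-1)}$, in strict parallel with the last steps of Proposition \ref{pr:rjk}.

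For (ii), since the triangular coefficients $a^{|\alpha|}_{|n||m|}$ and the normalizer $c^{|\alpha|}_{|r|,|n|}$ do not involve $d$, it suffices to take the pointwise limit of $\xi_{|m|}^{H,(|\alpha|,d)\downarrow}$ as $d\to\infty$. Using the convergence displayed in (\ref{esf}) and following the same asymptotic reduction used to pass from $\xi^{(|\alpha|,d)\downarrow}_{|m|}$ to (\ref{xiinf}), only terms coming from $k$ distinct coordinate indices survive: the factor $d_{[k]}$ produced by the number of ways to place the $k$ nonzero entries of $l^\downarrow$ cancels with the $d^{-k}$ coming from the symmetric parameter $\alpha_i=|\alpha|/d$. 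The posterior rising-factorial weights $(\alpha_i+r_i)_{(l_i)}/(|\alpha|+|r|)_{(|m|)}$ inherited from the $DM_{\alpha+r}$ form (\ref{hahn xi2}) then reduce in the limit to $(|\alpha|+|r|)_{(|m|)}^{-1}\prod_j {r_{i_j}}_{(l_j)}$, which is precisely the definition of $[r;b]'$. Symmetry and the vanishing of the first polynomial are direct consequences of the symmetrization, as in Proposition \ref{pr:rjk}.

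The main obstacle is the combinatorial bookkeeping in part (ii): one must verify that as $d\to\infty$ the symmetric parameter $|\alpha|/d$ combines cleanly with the rising factorials in (\ref{hahn_xi}) to cancel in a way that strips away $\alpha_i$-dependence and leaves only the $[r;b]'$, $[s;b]'$ shapes, with $ESF_{|\alpha|}(l)$ in the denominator playing the role that $DM^{\downarrow}_{|\alpha|,d}(l;|m|)$ played at finite $d$. All other steps (triangularity, orthogonality, completeness) are inherited automatically because the kernel is a finite linear combination indexed by $|m|\leq|n|$, so finite-dimensional convergence suffices.
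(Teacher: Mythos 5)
Your proposal is correct and follows exactly the route the paper intends: the paper gives no separate argument for Proposition \ref{pr:rhk}, stating only that it "can be obtained by the same arguments used to prove Proposition \ref{pr:rjk}," which is precisely the symmetrization-plus-$d\to\infty$ scheme you carry out. Your handling of the permutation absorption via the symmetry of $DM_{|\alpha|,d}(l;|m|)$ and of the $d_{[k]}$ versus $(|\alpha|/d)^k$ cancellation in the Ewens limit supplies the details the paper leaves implicit, and both are sound.
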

%
%check whether same inversion formula holds (it should) i.e. find
%leading coefficients of $\xi$.}

%s5 ###
\section{\mbox{Integral representation for Jacobi polynomial kernels}}\label{sec:irjpk}\vspace*{-2pt}

This section and Section~\ref{sec:irhpk} are a bridge between the first
and the second part of the paper. We provide an integral representation
for Jacobi and Hahn polynomial kernels, extending to $d\geq2 $ the
well-known Jacobi and Hahn product formulae found by Koornwinder and
Gasper for $d=2$ (\cite{Koo74,GAS72} and~\cite{GAS73}). It will
be a key tool to identify, under certain conditions on the parameters,
positive-definite sequences on the discrete and continuous
multi-dimensional simplex. The relationship between our integral
representation and a $d$-dimensional Jacobi product formula due to
Koornwinder and Schwartz~\cite{KS91} is also explained (Section \ref
{sec:csprod}).\vspace*{-2pt}

%s5.1 ###
\subsection{Product formula for Jacobi polynomials when $d=2$}\vspace*{-2pt}
For $d=2$, consider the shifted Jacobi polynomials normalized by their
value at 1,
%
%
%e5.1 ###
\begin{equation}
R_{n}^{\alpha,\beta}(x)=\frac{Q_n^{\alpha.\beta}(x,1)}{Q_n^{\alpha
,\beta}(1,1)}.
\end{equation}
They can also be obtained from the ordinary
Jacobi polynomials $P_{n}^{a,b}$ $(a,b>-1)$ with Beta weight
measure
\[
w_{a,b}=(1-x)^{a}(1+x)^{b}\,\mathrm{d}x,\qquad x\in[-1,1]
\]
via the transformation
%
%
%e5.2 ###
\begin{equation}R_{n}^{\alpha,\beta}(x)=\frac{P_{n}^{\beta-1,\alpha
-1}(2x-1)}{P_{n}^{\beta-1,\alpha-1}(1)}.
\label{01J}
\end{equation}
The constant of orthogonality $
{\zeta_{n}^{(\alpha,\beta)}}$ is given by (\ref{zetan}).

A crucial property of Jacobi polynomials is that, under
certain conditions on the parameters, products of Jacobi
polynomials have an integral representation with respect to a
positive (probability) measure. The following theorem is part of a more
general result of Gasper~\cite{GAS72}.
\begin{theorem}[(Gasper~\cite{GAS72})]\label{th:gasper}
A necessary and sufficient condition for the equality
%
%
%e5.3 ###
\begin{equation}
\frac{P_{n}^{a,b}(x)}{P_{n}^{a,b}(1)}\frac
{P_{n}^{a,b}(y)}{P_{n}^{a,b}(1)}=\int_{-1}^{1}\frac
{P_{n}^{a,b}(z)}{P_{n}^{a,b}(1)}\wt{m}_{x,y;a,b}(\mathrm{d}z),
\label{-11p}
\end{equation}
to hold for a positive measure $\mathrm{d}\wt{m}_{x,y},$ is that $a\geq b>-1$,
and either $b\geq1/2$
or $a+b\geq0$. If $a+b>-1$ or if $a>-1/2$ and $a+b=-1$ with $x\neq
-y,$ then $\wt{m}_{x,y;a,b}$ is absolutely continuous with respect to
${w}_{a,b}$, with density of the form
%
%
%e5.4 ###
\begin{equation}
\frac{\mathrm{d}\wt{m}_{x,y;a,b}}{\mathrm{d}{w}_{a,b}}(z)
=\sum_{n=0}^{\infty}\phi_n\frac
{P_{n}^{a,b}(x)}{P_{n}^{a,b}(1)}\frac
{P_{n}^{a,b}(y)}{P_{n}^{a,b}(1)}\frac{P_{n}^{a,b}(z)}{P_{n}^{a,b}(1)},
\label{3k}
\end{equation}
with $\phi_n={P_{n}^{a,b}}(1)^{2}/\mbb{E}[P_{n}^{a,b}(X)].$\vadjust{\goodbreak}
\end{theorem}

An explicit formula for the density (\ref{3k}) is possible when $a\geq
b>-1/2$.

%
%e5.5 ###
\begin{equation}
\frac{P_{n}^{a,b}(x)}{P_{n}^{a,b}(1)}\frac{P_{n}^{a,b}(y)}{P_{n}^{a,b}(1)}
=
\int_{0}^{1}\int_{0}^{\uppi}
\frac{P_{n}^{a,b}(\psi)}{P_{n}^{a,b}(1)} \wt{m}_{a,b}(\mathrm{d}u,\mathrm{d}\omega),
\label{A9.15.8}
\end{equation}
where
\[
\psi(x,y;u,\omega)= \{(1+x)(1+y)+(1-x)(1-y)\}/2 + u\cos\omega
\sqrt{(1-x^2)(1-y^2)}- 1
\]
and
%
%
%e5.6 ###
\begin{equation}
\wt{m}_{a,b}(\mathrm{d}u,\mathrm{d}\omega) =\frac{ 2\Gamma(a + 1) } {
\sqrt{\uppi}\Gamma(a - b)\Gamma(b + {1}/{2}) } (1-u^2)^{a - b
- 1}u^{2b +1}(\sin\omega)^{2b}\,\mathrm{d}u\,\mathrm{d}\omega. \label{expf}
\end{equation}
See~\cite{Koo74} for an analytic proof of this formula. Note that
$\phi(1,1;u,\omega) = 1$, so $\mathrm{d}\wt{m}_{a,b}(u,\omega)$ is a
probability measure.

Gasper's theorem can be rewritten in an obvious way, in terms
of the shifted Jacobi polynomials $R_{n}^{\alpha,\beta}(x)$ on
$[0,1]$:\vspace*{-2pt}

\begin{corollary}
\label{cor:1} For $\alpha,\beta>0$ the product formula
%
%
%e5.7 ###
\begin{equation}
R_{n}^{\alpha,\beta}(x)R_{n}^{\alpha,\beta}(y)=\int
_{0}^{1}R_{n}^{\alpha
,\beta}(z){m}_{x,y;
\alpha,\beta}(\mathrm{d}z) \label{01pf}
\end{equation}
holds for a positive measure ${m}_{x,y; \alpha,\beta}$, if and
only if $\beta\geq\alpha$, and either $\alpha\geq1/2$ or
$\alpha+\beta\geq2$. In this case
$m_{x,y}^{(\alpha,\beta)}=\wt{m}_{2x-1,2y-1;\beta-1,\alpha-1}$
where $\mathrm{d}\wt{m}$ is defined by (\ref{3k}). The measure is absolutely
continuous if $\alpha+\beta\geq2$ or if $\beta>1/2$ and $\alpha
+\beta
>1$ with $x\neq y.$ In this case
\[
m_{x,y}^{(\alpha,\beta)}(\mathrm{d}z)=K(x,y,z)D_{\alpha,\beta}(\mathrm{d}z),
\]
where
%
%
%e5.8 ###
\begin{equation}
K(x,y,z)=\sum_{n=0}^{\infty}\zeta_n^{\alpha,\beta}R_{n}^{\alpha
,\beta
}(x)R_{n}^{\alpha,\beta}(y)R_{n}^{\alpha,\beta}(z)\geq0.
\label{kfunc}\vspace*{-2pt}
\end{equation}
\end{corollary}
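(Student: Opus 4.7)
The plan is to deduce Corollary \ref{cor:1} as a direct transcription of the preceding theorem under the affine change of variable dictated by (\ref{01J}): namely, with $(a,b) = (\beta - 1, \alpha - 1)$ and $u = 2x-1$, $v = 2y-1$, $w = 2z-1$, the normalized Jacobi polynomial $P_n^{a,b}(u)/P_n^{a,b}(1)$ coincides with $R_n^{\alpha,\beta}(x)$. Applying this substitution to all three variables in (\ref{-11p}) and pushing the measure $\wt{m}_{u,v;a,b}$ forward along $w \mapsto (w+1)/2$ yields (\ref{01pf}) with
\[
m_{x,y;\alpha,\beta}(dz) := \wt{m}_{2x-1,2y-1;\beta-1,\alpha-1}\bigl(d(2z-1)\bigr).
\]
Since push-forward preserves positivity (and the transformation is a bijection of the relevant simplices), the existence of a positive representing measure on $[0,1]$ is equivalent to the existence of a positive representing measure on $[-1,1]$, for the same integrand up to normalization.

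Next, I translate the (necessary and sufficient) parameter conditions in the theorem. With $a = \beta-1$, $b = \alpha-1$, the conditions $a \geq b > -1$ become $\beta \geq \alpha$ together with $\alpha > 0$ (already standing), while the alternative "$b \geq -\tfrac12$ or $a+b \geq 0$" becomes "$\alpha \geq \tfrac12$ or $\alpha + \beta \geq 2$", exactly the statement of the corollary. Similarly the absolute continuity clause "$a+b > -1$, or $a > -\tfrac12$ and $a+b=-1$ with $u \neq -v$" translates, via $u+v = 2(x+y)-2$, to the conditions stated for $m_{x,y;\alpha,\beta}$ on $[0,1]$.

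For the explicit density (\ref{kfunc}), I substitute $u = 2x-1$, $v = 2y-1$, $w = 2z-1$ into (\ref{3k}). The reference weight $(1-w)^a(1+w)^b dw$ on $[-1,1]$ pushes forward, up to a multiplicative constant absorbed into the density, to the Dirichlet density $\propto z^{\alpha-1}(1-z)^{\beta-1} dz$, i.e.\ to $D_{\alpha,\beta}(dz)$. Using $R_n^{\alpha,\beta}(\cdot) = P_n^{a,b}(2 \cdot -1)/P_n^{a,b}(1)$ to rewrite every factor, the expansion in (\ref{3k}) takes exactly the form
\[
K(x,y,z) = \sum_{n=0}^{\infty} \zeta_n^{\alpha,\beta} R_n^{\alpha,\beta}(x) R_n^{\alpha,\beta}(y) R_n^{\alpha,\beta}(z),
\]
once one identifies $\phi_n = P_n^{a,b}(1)^2 / \mbb{E}[P_n^{a,b}(X)^2]$ with the inverse-norm constant $\zeta_n^{\alpha,\beta} = 1/\mbb{E}_{D_{\alpha,\beta}}[R_n^{\alpha,\beta}(X)^2]$ of (\ref{zetan}) (a one-line computation using the Jacobian of $w \mapsto (w+1)/2$). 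The non-negativity $K \geq 0$ is then inherited from the positivity of $m_{x,y;\alpha,\beta}$.

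The only genuine care needed is bookkeeping of boundary cases in the parameter conditions and identifying the correct exceptional set $\{(x,y): x+y=1\}$ arising from "$u \neq -v$"; none of this is conceptually hard, but it is the one spot where the translation is not purely mechanical. The remainder of the argument is a routine affine change of variable applied to Gasper's theorem.
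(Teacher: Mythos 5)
Your proposal is correct and is essentially the paper's own argument: the paper offers no proof beyond asserting that Gasper's theorem ``can be rewritten in an obvious way'' under the affine substitution $R_n^{\alpha,\beta}(x)=P_n^{\beta-1,\alpha-1}(2x-1)/P_n^{\beta-1,\alpha-1}(1)$, and your change of variables, push-forward of the measure, and identification of $\phi_n$ with $\zeta_n^{\alpha,\beta}$ are exactly that. One caveat: you quote Gasper's alternative condition as ``$b\geq -\tfrac12$ or $a+b\geq 0$,'' whereas the theorem as printed in the paper says ``$b\geq 1/2$ or $a+b\geq 0$''; under the paper's literal statement your substitution $b=\alpha-1$ would yield $\alpha\geq 3/2$, not the $\alpha\geq 1/2$ of the corollary. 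Your version agrees with Gasper's actual result (and with the corollary), so the theorem statement evidently carries a sign typo, but since your derivation hinges on it you should say so explicitly rather than silently emend the hypothesis. The same remark applies to the absolute-continuity clause: translating the theorem literally gives $\alpha+\beta>1$, or $\beta>\tfrac12$ and $\alpha+\beta=1$ with $x+y\neq 1$, and your correct identification of the exceptional set as $\{x+y=1\}$ is at odds with the ``$x\neq y$'' printed in the corollary --- worth flagging rather than absorbing into ``bookkeeping.''
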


\begin{remark}
When $\alpha,\beta$ satisfy the constraints of Corollary~\ref{cor:1},
we will say that $\alpha,\beta$ \textit{satisfy Gasper's conditions}.\vspace*{-2pt}
\end{remark}

When $\alpha\geq1/2,$ an explicit integral identity
follows from (\ref{A9.15.8})--(\ref{expf}). Let
$m_{\alpha\beta}(\mathrm{d}u,\mathrm{d}\omega) = \wt{m}_{\beta-1,\alpha
-1}(\mathrm{d}u,\mathrm{d}\omega).$ Then
%
%e5.9 ###
\begin{equation}
R_{n}^{\alpha,\beta}(x)R_{n}^{\alpha,\beta}(y)= \int_0^1\int
_0^\uppi
R_{n}^{\alpha,\beta}(\varphi) m_{\alpha\beta}(\mathrm{d}u,\mathrm{d}\omega),
\label{A9.15.8a}
\end{equation}
where for $x,y\in[0,1]$
%
%
%e5.10 ###
\begin{equation}
\varphi(x,y;u,\omega)= xy+(1-x)(1-y) + 2u\cos\omega\sqrt
{x(1-x)y(1-y)}. \label{scalephi}
\end{equation}
In $\phi$ set $x \leftarrow2x-1, y \leftarrow2y-1$ to obtain (\ref{scalephi}).\vadjust{\goodbreak}

%s5.2 ###
\subsection{Integral representation for $d>2$}
\label{sec:kint}

An extension of the product formula (\ref{01pf}) is possible for
the kernel $Q^{\alpha}_{n}$ for the bivariate Dirichlet of any
dimension $d.$
\begin{proposition}
\label{PROP:1} Let $\alpha\in\mathbb{R}_{+}^{d}$ such that, for
every $j=1,\ldots,d$, $\alpha_{j}\leq\sum_{i=1}^{j-1}\alpha_{i}$
and $1/2\leq\alpha_{j}$, or $\sum_{i=1}^{j}\alpha_{i}\geq2.$
Then, for every $\mb{x},\mb{y}\in\Delta_{(d-1)}$ and every
integer $ n,$
%
%
%e5.11 ###
\begin{equation}
{Q_{ n}^{\alpha}(\mb{x},\mb{y})}=\mbb{E}\bigl[Q_{ n}^{\alpha_d, |\alpha
|-\alpha_d}(Z_d,1) |  \mb{x},\mb{y}\bigr],\label{k1}
\end{equation}
where, for every $\mb{x},\mb{y}\in\Delta_{(d-1)}$, $Z_d$ is the $[0,1]$
random variable defined by the recursion
%
%
%e5.12 ###
\begin{equation}
Z_{1}\equiv1; \qquad Z_{j}=\Phi_{j}D_{j}Z_{j-1},\qquad j=2,\ldots,d, \label{zj}
\end{equation}
with
%
%
%e5.13 ###
\begin{eqnarray}\label{x*j}
D_{j}&:=&\frac{(1-x_{j})(1-y_{j})}{(1-X^{*}_{j})(1-Y^{*}_{j})};\qquad
 X^{*}_{j}:=\frac{x_{j}}{1-x_{j}(1-\sqrt Z_{j-1})};
 \nonumber
 \\[-8pt]
 \\[-8pt]
 \nonumber
  Y^{*}_{j}&:=&\frac{y_{j}}{1-y_{j}(1-\sqrt Z_{j-1})},
\end{eqnarray}
where $\Phi_{j}$ is a random variable in $[0,1],$ with distribution
\[
\mathrm{d}m_{x^{*}_{j},y^{*}_{j};\alpha_{j},\sum_{i=1}^{j-1}\alpha_{i}},
\]
where $\mathrm{d}m_{{x},{y}; \alpha,\beta}$ is defined as in Corollary
\ref{cor:1}.
\end{proposition}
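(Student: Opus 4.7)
The plan is to reduce (\ref{k1}) to a moment identity for $Z_d$ and prove the latter by induction on $d$, anchored by Gasper's product formula (Corollary~\ref{cor:1}). By (\ref{Dpoly}), both sides of (\ref{k1}) are linear combinations of the $\xi_{|m|}$ with the same coefficients $a_{|n||m|}^{|\alpha|}$ (these depend only on $|\alpha|,|m|,|n|$); using (\ref{xiej}) in the bivariate setup one has $\xi_{|m|}^{\alpha_d,|\alpha|-\alpha_d}(z,1)=\tfrac{|\alpha|_{(|m|)}}{(\alpha_d)_{(|m|)}}z^{|m|}$. Hence (\ref{k1}) is equivalent to the moment identity
\[
\mathbb{E}\!\left[Z_d^{|m|}\mid x,y\right]=\frac{(\alpha_d)_{(|m|)}}{|\alpha|_{(|m|)}}\,\xi_{|m|}^{\alpha}(x,y),\qquad |m|=0,1,2,\ldots,
\]
which I shall establish by induction on $d$.

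For the base case $d=2$, since $Z_1\equiv 1$ one has $X^\ast_2=x_2$, $Y^\ast_2=y_2$, $D_2=1$, and $Z_2=\Phi_2\sim m_{x_2,y_2;\alpha_2,\alpha_1}$. Using (\ref{inverse}) to write $z^{|m|}$ as a linear combination of the univariate Jacobi kernels $Q_{|n|}^{\alpha_2,\alpha_1}(z,1)=\zeta_{|n|}^{\alpha_2,\alpha_1}R_{|n|}^{\alpha_2,\alpha_1}(z)$, then integrating against $m_{x_2,y_2;\alpha_2,\alpha_1}$ and invoking Gasper's formula (\ref{01pf}) together with (\ref{qej}) give the moment identity for $d=2$ directly.

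For the inductive step I condition on $Z_{d-1}$. Since, given $Z_{d-1}$, $\Phi_d\sim m_{X^\ast_d,Y^\ast_d;\alpha_d,|\alpha|-\alpha_d}$, the base-case moment identity with parameters $(\alpha_d,|\alpha|-\alpha_d)$ gives
\[
\mathbb{E}\!\left[\Phi_d^{|m|}\mid Z_{d-1}\right]=\frac{(\alpha_d)_{(|m|)}}{|\alpha|_{(|m|)}}\,\xi_{|m|}^{(\alpha_d,|\alpha|-\alpha_d)}(X^\ast_d,Y^\ast_d).
\]
Multiplying by $D_d^{|m|}Z_{d-1}^{|m|}$, expanding $\xi_{|m|}^{(\alpha_d,|\alpha|-\alpha_d)}$ binomially, and substituting (\ref{x*j}), the factors $(1-X^\ast_d)(1-Y^\ast_d)$ appearing in the denominator of $D_d^{|m|}$ cancel with those arising in the binomial expansion, so $D_d^{|m|}Z_{d-1}^{|m|}\mathbb{E}[\Phi_d^{|m|}\mid Z_{d-1}]$ becomes a polynomial of degree $|m|$ in $x_d,y_d$ and in powers of $Z_{d-1}$ and $\sqrt{Z_{d-1}}$. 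Taking the outer expectation over $Z_{d-1}$ and invoking the inductive hypothesis (applied to the parameter vector obtained by aggregating the first and $d$-th coordinates, which is the reduction to a $(d-1)$-dimensional simplex natural to the recursion) collapses the resulting double sum into $\frac{(\alpha_d)_{(|m|)}}{|\alpha|_{(|m|)}}\xi_{|m|}^{\alpha}(x,y)$, closing the induction.

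The hard part will be the combinatorial-algebraic identity underpinning the inductive step: the precise forms of $X^\ast_d,Y^\ast_d$ (with their specific $\sqrt{Z_{d-1}}$-dependence) and of $D_d$ are designed so that, after regrouping via the Pochhammer identity $(\alpha_d)_{(|m|)}/(\alpha_d)_{(l_d)}=(\alpha_d+l_d)_{(|m|-l_d)}$ and summing over $l_d$, the expression reconstructs the separated-coordinate binomial sum defining $\xi_{|m|}^{\alpha}(x,y)$. This is pure Pochhammer bookkeeping; no further analytic input beyond Gasper's product formula is required. The level-by-level parameter constraints in the statement are exactly the Gasper conditions needed for each measure $m_{X^\ast_j,Y^\ast_j;\alpha_j,\sum_{i<j}\alpha_i}$ to be a bona fide probability measure, so that $Z_d$ is well defined.
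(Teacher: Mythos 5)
Your overall strategy is exactly the paper's: reduce (\ref{k1}) to the moment identity $\mbb{E}[Z_d^{|m|}\mid x,y]=\frac{(\alpha_d)_{(|m|)}}{|\alpha|_{(|m|)}}\xi^{\alpha}_{|m|}(x,y)$ (this is the paper's Lemma \ref{LEMMA:1}), and prove it by induction on $d$ with the base case supplied by Gasper's product formula. The reduction and the case $d=2$ are fine. The gap sits precisely in the ``pure Pochhammer bookkeeping'' you defer. Carrying it out: the inductive hypothesis is applied to $\tilde\alpha=(\alpha_1,\dots,\alpha_{d-1})$ at the renormalized points $\tilde x_i=x_i/(1-x_d)$ (there is no ``aggregation of the first and $d$-th coordinates''), and it delivers
\[
\mbb{E}\bigl[Z_{d-1}^{|m|-j}\bigr]=\frac{(\alpha_{d-1})_{(|m|-j)}}{(|\alpha|-\alpha_d)_{(|m|-j)}}\,\xi^{\tilde\alpha}_{|m|-j}(\tilde x,\tilde y),
\]
with the \emph{single coordinate} $\alpha_{d-1}$ in the Pochhammer prefactor. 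Peeling the last coordinate off $\xi^{\alpha}_{|m|}$ produces the weights ${|m|\choose j}\frac{|\alpha|_{(|m|)}}{(\alpha_d)_{(j)}(|\alpha|-\alpha_d)_{(|m|-j)}}$. If, as you propose, $\Phi_d\sim m_{X^*_d,Y^*_d;\,\alpha_d,|\alpha|-\alpha_d}$, then $D_d^{|m|}Z_{d-1}^{|m|}\mbb{E}[\Phi_d^{|m|}\mid Z_{d-1}]$ expands with denominators $(\alpha_d)_{(j)}(|\alpha|-\alpha_d)_{(|m|-j)}$, and after substituting the inductive hypothesis every term carries an uncancelled, $j$-dependent factor $(\alpha_{d-1})_{(|m|-j)}/(|\alpha|-\alpha_d)_{(|m|-j)}$; the resulting sum is \emph{not} $\frac{(\alpha_d)_{(|m|)}}{|\alpha|_{(|m|)}}\xi^{\alpha}_{|m|}(x,y)$. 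The induction does not close with your choice of mixing measure, so the claimed collapse is not mere bookkeeping --- as written it is false.

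The paper's own proof closes the induction by taking $\Phi_d$ with law $m_{X^*_d,Y^*_d;\,\alpha_d,\alpha_{d-1}}$, i.e.\ the two-variable $\xi$ with total parameter $\alpha_{d-1}+\alpha_d$ rather than $|\alpha|$: then the factor $(\alpha_{d-1})_{(|m|-j)}$ produced by the inductive hypothesis is exactly the denominator required by the binomial sum, and the prefactors telescope to $|\alpha|_{(|m|)}/(\alpha_d)_{(|m|)}$ as in (\ref{cvd}). (You inherited the aggregate pair $(\alpha_j,\sum_{i<j}\alpha_i)$ from the printed statement of the Proposition, which is indeed at odds with the paper's proof; but the computation only works with the consecutive pair, and correspondingly the product formula must be invoked under Gasper's conditions for $(\alpha_j,\alpha_{j-1})$.) To repair your argument you must either switch to the consecutive-pair mixing measure and redo the expansion, or supply a genuinely new argument for the aggregate-pair version; the step you labelled ``the hard part'' is exactly where the proof lives.
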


The proposition makes it natural to order the parameters
of the Dirichlet in a decreasing way, so that it is sufficient to
assume that $\alpha_{(1)}+\alpha_{(2)}\geq2$ to obtain the
representation~(\ref{k1}).

Since the matrix $A=\{a_{nm}\}$ is invertible, the proof of
Proposition~\ref{PROP:1} only depends on the properties of the function
$\xi$. The following lemma is in fact all we need.

\begin{lemma}
\label{LEMMA:1}
For every $ m\in\mbb{N},d=2,3,\ldots$ and
$\alpha\in\mathbb{R}^{d}$ satisfying the assumptions of
Proposition~\ref{PROP:1},   %
%
%e5.14 ###
\begin{equation}
\xi^{\alpha}_{ m}(\mb{x},\mb{y})=\frac{|\alpha|_{( m)}}{(\alpha_{d})_{(
m)}}\mbb{E}[ Z_{d}^{ m} |  \mb{x},\mb{y}],\label{pr1}
\end{equation}
where $Z_{d}$ is defined as in Proposition
\ref{PROP:1}.
\end{lemma}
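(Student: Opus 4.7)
The plan is to prove the lemma by induction on $d$, using Gasper's one-dimensional product formula (Corollary \ref{cor:1}) as the engine at each step and the kernel-expansion identities (\ref{xiej}) and (\ref{inverse}) as the bridge between $\xi$-functions and univariate Jacobi polynomials.

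For the base case $d=2$, the recursion collapses: $\sqrt{Z_1}=1$ yields $X^*_2=x_2$, $Y^*_2=y_2$, $D_2=1$, so $Z_2=\Phi_2\sim dm_{x_2,y_2;\alpha_2,\alpha_1}$. I would specialise (\ref{xiej}) to $x=e_2$ and combine it with (\ref{inverse}) and the $d=2$ instance of (\ref{qej}) (for which $Q^\alpha_{n}(x,y)=\zeta_{n}^{\alpha_2,\alpha_1}R_{n}^{\alpha_2,\alpha_1}(x_2)R_{n}^{\alpha_2,\alpha_1}(y_2)$) to produce the explicit expansion of $y_2^{|m|}$ in the basis $\{R_{n}^{\alpha_2,\alpha_1}(y_2)\}_{n\le|m|}$. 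Applying Gasper's product formula $R_{n}^{\alpha_2,\alpha_1}(x_2)R_{n}^{\alpha_2,\alpha_1}(y_2)=\mbb{E}[R_{n}^{\alpha_2,\alpha_1}(\Phi_2)]$ term-by-term to this expansion and reassembling via (\ref{inverse}) then gives $\mbb{E}[\Phi_2^{|m|}]=\frac{(\alpha_2)_{(|m|)}}{|\alpha|_{(|m|)}}\xi^\alpha_{|m|}(x,y)$, i.e.\ the $d=2$ statement.

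For the inductive step, I would condition on $Z_{d-1}$: the conditional law of $\Phi_d$ is $dm_{X^*_d,Y^*_d;\alpha_d,|\alpha|-\alpha_d}$, so the base case applied with parameters $(|\alpha|-\alpha_d,\alpha_d)$ and arguments $(X^*_d,Y^*_d)$ yields
$$\mbb{E}[\Phi_d^{|m|}\mid Z_{d-1}]=\frac{(\alpha_d)_{(|m|)}}{|\alpha|_{(|m|)}}\,\xi^{|\alpha|-\alpha_d,\alpha_d}_{|m|}\bigl((1-X^*_d,X^*_d),(1-Y^*_d,Y^*_d)\bigr).$$
Multiplying by $D_d^{|m|}Z_{d-1}^{|m|}$ and using the defining ratio $D_d=(1-x_d)(1-y_d)/[(1-X^*_d)(1-Y^*_d)]$ to cancel the $(1-X^*_d)^{|m|-l}(1-Y^*_d)^{|m|-l}$ factors arising from the binomial expansion of the $\xi$ on the right, one obtains $\mbb{E}[Z_d^{|m|}\mid Z_{d-1}]$ as a sum whose $l$-th summand involves a specific power of $Z_{d-1}$ and of $X^*_dY^*_d/[(1-X^*_d)(1-Y^*_d)]$. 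The role of the precise functional form (\ref{x*j}) is exactly to ensure that, after taking the outer expectation and applying the inductive hypothesis (with the recursion interpreted in dimension $d-1$ and with the renormalised coordinates $x'_i=x_i/(1-x_d)$, $y'_i=y_i/(1-y_d)$ and parameter $\alpha'=(\alpha_1,\ldots,\alpha_{d-1})$), the result collapses onto the one-coordinate splitting
$$\xi^\alpha_{|m|}(x,y)=|\alpha|_{(|m|)}\sum_{l=0}^{|m|}\binom{|m|}{l}\frac{(x_dy_d)^l((1-x_d)(1-y_d))^{|m|-l}}{(\alpha_d)_{(l)}|\alpha'|_{(|m|-l)}}\xi^{\alpha'}_{|m|-l}(x',y'),$$
which is obtained directly from the series definition of $\xi$ by separating the index $l_d$.

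The main obstacle is this algebraic identification: verifying that the somewhat opaque $X^*_d,Y^*_d$ of (\ref{x*j}) are precisely the substitutions under which the outer expectation of the $l$-th term produces the right $(x_dy_d)^l((1-x_d)(1-y_d))^{|m|-l}\xi^{\alpha'}_{|m|-l}(x',y')$, with the correct Pochhammer prefactor coming out of the ratio of $(\alpha_d)$ and $|\alpha'|$ Pochhammer symbols. Essentially all the non-trivial content of (\ref{x*j}) lives in this matching step, and it has to be checked by direct computation using the explicit form of $X^*_j,Y^*_j$ together with the moments of the measures $dm_{\cdot,\cdot;\alpha,\beta}$ provided by Gasper's product formula.
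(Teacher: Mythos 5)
Your proposal has the same architecture as the paper's proof: induction on $d$; base case $d=2$ obtained by applying the product formula term by term to the expansion (\ref{inverse}) of $\xi_{|m|}$ in polynomial kernels and resumming via $\xi^{\alpha,\beta}_{|m|}(z,1)=\frac{(\alpha+\beta)_{(|m|)}}{(\alpha)_{(|m|)}}z^{|m|}$; inductive step built on the one-coordinate splitting you display (which is exactly the paper's (\ref{decomp})) together with the change of variables $(x_d,y_d,Z_{d-1})\mapsto(X^*_d,Y^*_d,D_d)$. The only structural difference is direction: the paper decomposes $\xi^{\alpha}_{|m|}$, applies the inductive hypothesis to the inner $(d-1)$-dimensional $\xi$, and then recognises a two-dimensional $\xi$ evaluated at $(X^*_d,Y^*_d)$, whereas you build up $\mbb{E}[Z_d^{|m|}]$ from $\mbb{E}[\Phi_d^{|m|}\mid Z_{d-1}]$; these are the same computation read in opposite order.

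The gap is in the step you explicitly postpone, and it is not a routine check: with the choices you made it fails. Taking $\Phi_d\sim dm_{X^*_d,Y^*_d;\alpha_d,|\alpha|-\alpha_d}$ (the literal reading of Proposition \ref{PROP:1}), the $l$-th term of $\mbb{E}[\Phi_d^{|m|}\mid Z_{d-1}]$ carries the denominator $(\alpha_d)_{(l)}(|\alpha|-\alpha_d)_{(|m|-l)}$. After multiplying by $(D_dZ_{d-1})^{|m|}$ and taking the outer expectation, the inductive hypothesis applied to $\alpha'=(\alpha_1,\ldots,\alpha_{d-1})$ gives
\[
\mbb{E}\left[Z_{d-1}^{|m|-l}\mid x',y'\right]=\frac{(\alpha_{d-1})_{(|m|-l)}}{(|\alpha|-\alpha_d)_{(|m|-l)}}\,\xi^{\alpha'}_{|m|-l}(x',y'),
\]
where the normalisation involves the \emph{last single component} $\alpha_{d-1}$ of $\alpha'$, not $|\alpha'|=|\alpha|-\alpha_d$. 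The $l$-th term therefore ends up with $(\alpha_{d-1})_{(|m|-l)}/(|\alpha|-\alpha_d)_{(|m|-l)}^2$ where your splitting identity requires $1/(|\alpha|-\alpha_d)_{(|m|-l)}$, and the uncancelled ratio $(\alpha_{d-1})_{(|m|-l)}/(|\alpha|-\alpha_d)_{(|m|-l)}$ prevents the collapse. The recursion closes precisely when $\Phi_d$ is mixed by $dm_{X^*_d,Y^*_d;\alpha_d,\alpha_{d-1}}$, and this is what the paper's own proof uses (see the $(\alpha_{d-1})_{(|m|-m_d)}$ denominators in (\ref{decomp2})--(\ref{decomp3})), in conflict with the parametrisation stated in Proposition \ref{PROP:1}. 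You have inherited an inconsistency of the paper, but the point stands: the matching you defer is where all the content lies, carrying it out is what forces the correct second parameter, and as written your inductive step does not close. (A smaller point of the same kind: the geometric cancellation needs $X^*_d/(1-X^*_d)=x_d/[(1-x_d)\sqrt{Z_{d-1}}]$, the form used inside the paper's proof, rather than the formula (\ref{x*j}) as printed; only then does $(D_dZ_{d-1})^{|m|}(X^*_dY^*_d)^{l}[(1-X^*_d)(1-Y^*_d)]^{|m|-l}$ reduce to $(x_dy_d)^{l}[(1-x_d)(1-y_d)]^{|m|-l}Z_{d-1}^{|m|-l}$.)
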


Let $\theta=\alpha+\beta$. Assume the lemma is true. From
(\ref{A9.15.8a}) and (\ref{2xi1}) we know that, for every
$n=0,1,\ldots
$ and every $s\in[0,1]$,
\[
Q^{\alpha,\beta}_{ n}(s,1)=
\sum_{ m\leq n}a^{\theta}_{ n m}\frac{\ifact{(\theta)}{ m}}{\ifact
{\alpha}{ m}}s^{ m}.
\]
Thus from (\ref{pr1})
\begin{eqnarray*}
Q_{ n}^{\alpha}(\mb{x},\mb{y})&=&\mbb{E}\biggl[\sum_{ m\leq
n}a^{|\alpha|}_{ n m}\frac{\ifact{(|\alpha|)}{ m}}{\ifact{\alpha_{d}}{
m}}Z_{d}^{ m} \big| \mb{x},\mb{y}\biggr]\nonumber\\
&=& \mbb{E}\bigl[ Q^{\alpha_{d},|\alpha|-\alpha_{d}}_{ n}(Z_{d},1) |  \mb{x},\mb{y}\bigr],
\end{eqnarray*}
which is what is claimed in Proposition~\ref{PROP:1}.

Now we proceed with the proof of the lemma.

\begin{pf*}{Proof of Lemma \protect\ref{LEMMA:1}}
The proof is by induction.

If $d=2$, $x,y\in[0,1]$,

%
%e5.15 ###
\begin{equation}
\xi^{(\alpha,\beta)}_{ m}(x,y)=\sum_{j=0}^{ m}\pmatrix{ m\cr
j}\frac{(\alpha+\beta)_{( m)}}{(\alpha)_{(j)}(\beta)_{(
m-j)}}(xy)^{j}{\bmom{x}{y}{ m-j}}.
\label{2xi}
\end{equation}
Setting $y=1$, the only positive addend in (\ref{2xi}) is the one
with $j= m$, so
%
%e5.16 ###
\begin{equation}
\xi^{(\alpha,\beta)}_{ m}(x,1)=\frac{(\alpha+\beta)_{(
m)}}{(\alpha)_{(
m)}} z^{ m}. \label{2xi1}
\end{equation}
Therefore, if $\theta=\alpha+\beta$, from (\ref{A9.15.8a}) and
(\ref
{2xi1}), we conclude
%
%e5.17 ###
\begin{eqnarray}\label{2xi2}
\xi^{\alpha,\beta}_{ m}(x,y)&=&\sum_{j=0}^{ m}\pmatrix{ m\cr
j}\frac{(\theta)_{( m)}}{(\alpha)_{(j)}(\beta)_{(
m-j)}}(xy)^{j}{\bmom
{x}{y}{ m-j}}
\nonumber
\\[-8pt]
\\[-8pt]
\nonumber
&=&\frac{(\theta)_{( m)}}{(\alpha)_{( m)}}\int
_{[0,1]} z^{ m}m_{x,y; \alpha,\beta}(\mathrm{d}z).
\end{eqnarray}
Thus the proposition is true for $d=2$.

To prove the result for any general $d>2$, consider
%
%
%e5.18 ###
\begin{eqnarray}\label{decomp}
\xi^{\alpha}_{ m}(\mb{x},\mb{y})&=&\sum_{m_{d}=0}^{ m}\pmatrix{ m\cr
m_{d}}{(x_{d}y_{d})}^{m_{d}}\bmom{x_{d}}{y_{d}}{ m-m_{d}}\frac
{{(|\alpha
|)}_{ m}}{(\alpha_{d})_{(m_{d})}(|\alpha|-\alpha_{d})_{(
m-m_{d})}}\qquad
\nonumber
\\[-8pt]
\\[-8pt]
\nonumber
&&\phantom{\sum_{m_{d}=0}^{ m}}{}\times
\sum_{{\tilde{m}}\in\mathbb{N}^{d-1}:|\tilde{m}|= m-m_{d}}\pmatrix{{
m-m_{d}}\cr{\tilde{m}}}\ppr{(|\alpha|-\alpha_{d})}{
m-m_{d}}{d-1}{\alpha}{\tilde{m}}\copr{d-1}{\tilde{x}}{\tilde
{y}}{\tilde{m}},
\end{eqnarray}
where $\tilde{x}_{i}=\frac{x_{i}}{1-x_{d}}$, $\tilde{y}_{i}=\frac
{y_{i}}{1-y_{d}}$ $(i=1,\ldots,d-1)$.\vspace*{1pt}

Now assume the proposition is true for $d-1$. Then
the inner sum of (\ref{decomp}) has a representation like (\ref{pr1}),
and we can write
%
%e5.19 ###
\begin{eqnarray}\label{decomp2}
\xi^{\alpha}_{ m}(\mb{x},\mb{y})&=& \sum_{m_{d}=0}^{ m}\pmatrix{ m\cr
m_{d}}{(x_{d}y_{d})}^{m_{d}}\bmom{x_{d}}{y_{d}}{ m-m_{d}}\nonumber\\
&&{}\hspace*{20pt}\times\frac
{{(|\alpha
|)}_{ m}}{(\alpha_{d})_{(m_{d})}(|\alpha|-\alpha_{d})_{(
m-m_{d})}}\qquad
\\
&&{}\hspace*{20pt}\times
\frac{(|\alpha|-\alpha_{d})_{( m-m_{d})}}{(\alpha_{d-1})_{(
m-m_{d})}}\mbb{E}[
Z_{d-1}^{ m-m_{d}}\vert\wt{\mb{x}},\wt{\mb{y}}],\nonumber
\end{eqnarray}
where the distribution of $Z_{d-1},$ given $\wt{\mb{x}},\wt{\mb{y}}$,
depends only on $\tilde{\alpha}={(\alpha_{1},\ldots,\alpha
_{d-1})}.$ Now,
set
\begin{eqnarray*}
\frac{X^{*}_{d}}{1-X^{*}_{d}}&=&\frac{x_{d}}{(1-x_{d}){\sqrt{Z_{d-1}}}};
\\
\frac{Y^{*}_{d}}{1-Y^{*}_{d}}&=&\frac{y_{d}}{(1-y_{d}){\sqrt{Z_{d-1}}}},
\end{eqnarray*}
and define the random variable
%
%e5.20 ###
\begin{equation}
D_{d}:=\frac{(1-x_{d})(1-y_{d})}{(1-X^{*}_{d})(1-Y^{*}_{d})}. \label{eta}
\end{equation}
Then simple algebra leads to rewriting equation (\ref{decomp2}) as
%
%e5.21 ###
\begin{eqnarray}\label{decomp3}
\xi^{\alpha}_{ m}(\mb{x},\mb{y})&=&
\mbb{E}\Biggl[\frac{|\alpha|_{( m)}(D_{d}Z_{d-1})^{ m}}{\ifact{(\alpha
_{d-1}+\alpha_{d})}{ m}}
\Biggl(\sum_{m_{d}=0}^{ m}\pmatrix{ m\cr
m_{d}}\frac{\ifact{(\alpha_{d-1}+\alpha_{d})}{ m}}{\ifact{(\alpha
_{d})}{m_{d}}
\ifact{(\alpha_{d-1})}{ m-m_{d}}}
\nonumber
\\[-8pt]
\\[-8pt]
\nonumber
&&\hspace*{115pt}{}\times (X^{*}_{d}X^{*}_{d})^{m_{d}}\bmom
{X^{*}_{d}}{Y^{*}_{d}}{ m-m_{d}}\Biggr) \Big|\mb{x},\mb{y}\Biggr].\qquad
\end{eqnarray}
Now the sum in (\ref{decomp3}) is of the form (\ref{2xi}), with
$\alpha=\alpha_{d-1}$, $\beta=\alpha_{d}$, with $m$ replaced by
$m-m_{d}$ and the pair $(x,y)$ replaced by
$(x^{*}_{d},y^{*}_{d})$. Therefore we can use equality
(\ref{2xi2}) to obtain
%
%e5.22 ###
\begin{eqnarray}\label{cvd}
\xi^{\alpha}_{ m}(\mb{x},\mb{y})&=&\mbb{E}\biggl[\frac{(|\alpha|)_{(
m)}}{\ifact{(\alpha_{d})}{ m}}(D_{d}Z_{d-1})^{ m}\mbb{E}( \Phi
_{d}^{ m} | X^{*}_{d},Y^{*}_{d} ) \big|\mb{x},\mb{y}
\biggr]
\nonumber
\\[-8pt]
\\[-8pt]
\nonumber
&=&\frac{(|\alpha|)_{( m)}}{\ifact{(\alpha_{d})}{ m}}\mbb{E}
[Z_{d}^{ m} |  \mb{x},\mb{y}]
\end{eqnarray}
(the inner conditional expectation being a function of $Z_{d-1}$) so
the proof is complete.
\end{pf*}

%s5.3 ###
\subsection{Connection with a multivariate product formula by
Koornwinder and Schwartz}\label{sec:csprod}

For the individual, multivariate Jacobi polynomials orthogonal
with respect to $D_{\alpha}\dvt\alpha\in\mbb{R}^d,$ a~product formula
is proved in~\cite{KS91}. For every $\mb{x}\in\Delta_{(d-1)}$,
$\alpha\in\mathbb{R}_{+}^{d}$ and
$\mb{n}=(n_{1},\ldots,n_{d-1})\dvt|\mb{n}|=n,$ these polynomials can be written
as
%
%
%e5.23 ###
\begin{equation}
R_{\mb{n}}^{\alpha}(\mb{x})=\prod_{j=1}^{d-1}\biggl[R_{n_{j}}^{\alpha
_{j},E_{j}+2N_{j}}\biggl(\frac{x_{j}}{1-\sum_{i=1}^{j-1}x_{i}}
\biggr)\biggr]
\biggl(1-\frac{x_{j}}{1-\sum_{i=1}^{j-1}x_{i}}\biggr)^{N_j}
\label{mvopd},
\end{equation}
where $E_{j}=|\alpha|-\sum_{i=1}^{j}\alpha_{i}$ and
$N_{j}=n-\sum_{i=1}^{j}n_{i}$. The normalization is such that
$R_{n}^{\alpha}(e_d)=1, $ where $e_d:=(0,0,\ldots,1)\in\mbb{R}^d.$
For an account of such polynomials see also~\cite{GS08}.\vspace*{-2pt}

\begin{theorem}[(Koornwinder and Schwartz)]
\label{th:ks91} Let
$\alpha\in\mathbb{R}^{d}$ satisfy $\alpha_{d}>1/2$ and, for every
$j=1,\ldots,d$, $\alpha_{j}\geq\sum_{i=j+1}^{d}\alpha_{i}$. Then,
for every $\mb{x},\mb{y}\in\Delta_{(d-1)}$ there exists a positive
probability measure $\mathrm{d}m^*_{\mb{x},\mb{y};\alpha}$ such that, for every
$\mb{n}\in\mathbb{N}_{+}^{d}$,
%
%
%e5.24 ###
\begin{equation}
R_{\mb{n}}^{\alpha}(\mb{x})R_{n}^{\alpha}(\mb{y})=\int_{\Delta_{(d-1)}}
R_{\mb{n}}^{\alpha}(\mb{z})m^{*}_{\mb{x},\mb{y};\alpha}(\mathrm{d}\mb{z})
\label{dpf}.
\end{equation}
\end{theorem}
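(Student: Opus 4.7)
The strategy is to mirror the inductive argument of Proposition~\ref{PROP:1}, but applied to the \emph{individual} polynomials $R_n^{\alpha}$ via their explicit product decomposition (\ref{mvopd}) rather than to kernels. One inducts on $d$. For $d=2$, the claim reduces to Gasper's one-dimensional product formula (Corollary~\ref{cor:1}): after the involution $x\mapsto 1-x$ reconciling the parameter ordering, the KS hypotheses $\alpha_{1}\geq\alpha_{2}$ and $\alpha_{2}>1/2$ coincide with Gasper's conditions for $R_{n}^{\alpha_{2},\alpha_{1}}$, so $m^{*}_{x,y;\alpha}$ is obtained directly from the Gasper measure.

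For the inductive step, use (\ref{mvopd}) to split off the first coordinate:
$$R_{n}^{\alpha}(x)=R_{n_{1}}^{\alpha_{1},\,E_{1}+2N_{1}}(x_{1})(1-x_{1})^{N_{1}}\cdot R_{\tilde n}^{\tilde\alpha}(\tilde x),\qquad \tilde x_{i}=\frac{x_{i+1}}{1-x_{1}},$$
with $\tilde\alpha=(\alpha_{2},\dots,\alpha_{d})$ and $\tilde n=(n_{2},\dots,n_{d})$. A quick index check shows that $\tilde\alpha$ inherits the KS inequalities, and in particular $\tilde\alpha_{d-1}=\alpha_{d}>1/2$. By induction, there is a positive probability measure $m^{*}_{\tilde x,\tilde y;\tilde\alpha}$ on $\Delta_{(d-2)}$ representing the product $R_{\tilde n}^{\tilde\alpha}(\tilde x)R_{\tilde n}^{\tilde\alpha}(\tilde y)$. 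The target $m^{*}_{x,y;\alpha}$ will then be built as the joint law on $\Delta_{(d-1)}$ of $z=(z_{1},(1-z_{1})\tilde z)$, where $z_{1}\in[0,1]$ is drawn from a one-dimensional measure $\nu_{x_{1},y_{1}}$ to be constructed, and, conditionally on $z_{1}$, $\tilde z\sim m^{*}_{\tilde x,\tilde y;\tilde\alpha}$.

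The main obstacle is producing the \emph{weighted} one-variable identity
$$R_{n_{1}}^{\alpha_{1},\,E_{1}+2N_{1}}(x_{1})(1-x_{1})^{N_{1}}\,R_{n_{1}}^{\alpha_{1},\,E_{1}+2N_{1}}(y_{1})(1-y_{1})^{N_{1}}=\int_{0}^{1} R_{n_{1}}^{\alpha_{1},\,E_{1}+2N_{1}}(z_{1})(1-z_{1})^{N_{1}}\nu_{x_{1},y_{1}}(dz_{1})$$
with $\nu$ a positive measure. A naked application of Gasper's formula is not enough: under the KS ordering one has $\alpha_{1}\geq|\alpha|/2$, and the unweighted parameters of $R^{\alpha_{1},E_{1}+2N_{1}}$ violate Gasper's $\beta\geq\alpha$ as soon as $N_{1}<\alpha_{1}-|\alpha|/2$. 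The weights $(1-\cdot)^{N_{1}}$ are essential; they effectively inflate the $\beta$-parameter back into Gasper's admissible region. I would try to derive the weighted identity by plugging the trigonometric parametrization $\varphi(x_{1},y_{1};u,\omega)$ from (\ref{A9.15.8a})--(\ref{expf}) into the integrand and checking algebraically that $(1-\varphi)^{N_{1}}$ factors compatibly against the density in (\ref{expf}); an alternative route is a biorthogonal expansion into Jacobi polynomials with appropriately shifted parameters. This step is the technical crux and is essentially what is handled in \cite{KS91}. Once the weighted identity is in hand, Fubini combines it with the inductive $m^{*}_{\tilde x,\tilde y;\tilde\alpha}$ to produce a positive measure on $\Delta_{(d-1)}$; applying (\ref{mvopd}) in reverse identifies the integrand as $R_{n}^{\alpha}(z)$, closing the induction.
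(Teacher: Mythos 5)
The paper offers no proof of Theorem \ref{th:ks91}: it is imported verbatim from \cite{KS91} and only used downstream (e.g.\ in Proposition \ref{prop:mg}), so your proposal has to be judged as a reconstruction of the external argument, and it has a genuine gap precisely at the step you yourself flag. The weighted univariate identity for $R_{n_1}^{\alpha_1,E_1+2N_1}(\cdot)\,(1-\cdot)^{N_1}$ is not a technical detail that can be outsourced --- it \emph{is} the theorem (it is the product formula on the ``parabolic biangle'' of the title of \cite{KS91}), and neither of the routes you sketch closes it. In particular the weight does not factor against Koornwinder's kernel: from (\ref{scalephi}) one computes $1-\varphi = x(1-y)+y(1-x)-2u\cos\omega\sqrt{x(1-x)y(1-y)}$ (set $y=1$ to see this equals $1-x$ rather than $0$), so $(1-\varphi)^{N_1}$ is not $(1-x)^{N_1}(1-y)^{N_1}$ times a function of $(u,\omega)$ alone and cannot be absorbed into the density (\ref{expf}).

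There is also a structural obstruction to your ansatz for $m^{*}_{x,y;\alpha}$. Both the parameter $E_1+2N_1$ and the exponent $N_1=|n|-n_1$ vary with $n$, so the functions you must reproduce in the first coordinate form a genuinely two-parameter family indexed by $(n_1,N_1)$; a single measure $\nu_{x_1,y_1}$ on $[0,1]$, fixed once and for all, would have to satisfy the weighted product identity for every such pair simultaneously, which is far stronger than one application of Gasper's theorem. This is exactly why Koornwinder and Schwartz work on the two-dimensional parabolic biangle --- the pair (coordinate, weight) is treated as a single orthogonal system with its own positive product formula on a planar region --- and only then descend to the simplex. Your base case is sound (under the normalization $R_n^{\alpha}(e_d)=1$ the $d=2$ statement is Gasper's formula for the parameters $(\alpha_2,\alpha_1)$ after the reflection $x\mapsto 1-x$), and the peeling of the first coordinate via (\ref{mvopd}) is the right skeleton, but the inductive step as designed cannot be completed without rebuilding the biangle construction, i.e.\ without reproving \cite{KS91}. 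Note that the kernel-level analogue in this paper, Proposition \ref{PROP:1}, sidesteps the whole issue because summing over all $n$ with $|n|$ fixed removes the dependence on $N_1$; for the individual polynomials there is no such shortcut.
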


Note that Theorem~\ref{th:ks91} holds for conditions on
$\alpha$ which are stronger than our Proposition~\ref{PROP:1}. This is
the price to pay for the measure $m^{*}_{\mb{x},\mb{y};\alpha}$ of
Koornwinder and Schwartz\vspace*{2pt} to have an explicit description (we omit it
here), extending (\ref{expf}). It is
possible to establish a relation between the measure
$m^{*}_{\mb{x},\mb{y};\alpha}(z)$ of Theorem~\ref{th:ks91} and the
distribution of $Z_d$
%$m_{x,y;\alpha}(z_{d})$
of Proposition~\ref{PROP:1}.\vspace*{-2pt}

\begin{proposition}
\label{prop:mg} Let $\alpha$ obey the conditions of Theorem
\ref{th:ks91}. Denote with $m_{\mb{x},\mb{y};\alpha}$ the probability
distribution of $Z_d$ of Proposition~\ref{PROP:1} and $m^{*}_{\mb
{x},\mb
{y};\alpha}$ the mixing measure in Theorem
\ref{th:ks91}. Then
\[
m^{*}_{\mb{x},\mb{y};\alpha}=m_{\mb{x},\mb{y};\alpha}.\vspace*{-2pt}
\]
\end{proposition}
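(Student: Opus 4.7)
The plan is to show equality by checking that both measures integrate the Koornwinder--Schwartz polynomials $R_n^\alpha$ of (\ref{mvopd}) to the same values. Since $\{R_n^\alpha\}$ is complete in $L^2(D_\alpha)$ and both $m^\ast_{x,y;\alpha}$ and $m_{x,y;\alpha}$ are absolutely continuous with respect to $D_\alpha$ under the hypotheses of Theorem \ref{th:ks91}, matching these ``generalized moments'' forces the measures to coincide. I regard both objects as probability measures on $\Delta_{(d-1)}$: the Koornwinder--Schwartz measure directly from Theorem \ref{th:ks91}, and $m_{x,y;\alpha}$ through the joint law of the auxiliary variables $(\Phi_2,\ldots,\Phi_d)$ in the recursion (\ref{zj})--(\ref{x*j}), which, once lifted by the rescalings $X_j^\ast,Y_j^\ast$, naturally parametrize a point of $\Delta_{(d-1)}$ in the same nested coordinate system used to define the $R_n^\alpha$.

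Theorem \ref{th:ks91} immediately gives $\int R_n^\alpha(z)\,m^\ast_{x,y;\alpha}(dz)=R_n^\alpha(x)R_n^\alpha(y)$ for every multi-index $n$. I would then prove the same identity for $m_{x,y;\alpha}$ by induction on $d$. The base case $d=2$ is precisely Gasper's one-dimensional product formula (\ref{01pf}), which is the distribution of $Z_2=\Phi_2 D_2$. For the inductive step, I would use the nested factorization (\ref{mvopd}) to write $R_n^\alpha$ as a product of a one-dimensional shifted Jacobi polynomial in the outer-most rescaled variable $\tfrac{x_{d-1}}{1-\sum_{i<d-1}x_i}$ (with parameters $(\alpha_{d-1},\alpha_d)$) together with a lower-dimensional Jacobi polynomial in the collapsed variables $(x_1,\ldots,x_{d-2},x_{d-1}+x_d)$; Gasper's formula (\ref{01pf}) handles the outer factor, exactly because the Gasper measure that appears there is by construction the law of the $\Phi_j$ in (\ref{zj}), while the induction hypothesis handles the inner factor with the transformed arguments $X_j^\ast,Y_j^\ast$ of (\ref{x*j}). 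Collecting the two contributions reproduces $R_n^\alpha(x)R_n^\alpha(y)$.

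The main obstacle is the careful bookkeeping needed to match the two recursive structures: one must verify that the rescaled variables $X_j^\ast,Y_j^\ast$, the multiplier $D_j$, and the power factors $(1-\tfrac{x_j}{1-\sum_{i<j}x_i})^{N_j}$ appearing in (\ref{mvopd}) combine in exactly the right way to give the Gasper integrand at the correct parameters $(\alpha_j,\sum_{i<j}\alpha_i)$. This is essentially the same matching carried out in the proof of Lemma \ref{LEMMA:1} for the moment functions $\xi_{|m|}^\alpha$, but now has to be performed at the level of individual polynomials $R_n^\alpha$ rather than their symmetric combinations, which is where the technical heart of the argument lies.
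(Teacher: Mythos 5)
Your overall strategy---match integrals against a complete polynomial system, then invoke completeness---is sound in spirit, but the way you propose to compute $\int R_n^\alpha\,dm_{x,y;\alpha}$ has a genuine gap. The inductive step rests on applying Gasper's formula (\ref{01pf}) factor by factor to the nested product (\ref{mvopd}). But in (\ref{mvopd}) the $j$-th factor is $R_{n_j}^{\alpha_j,\,E_j+2N_j}$, whose second parameter depends on the residual degree $N_j=n-\sum_{i\le j}n_i$, and it is accompanied by the non-polynomial factor $\bigl(1-x_j/(1-\sum_{i<j}x_i)\bigr)^{N_j}$. Consequently the product of the inner factors is \emph{not} the lower-dimensional Jacobi polynomial in the collapsed variables $(x_1,\ldots,x_{d-2},x_{d-1}+x_d)$ (the collapsed system would carry parameters shifted by $2n_{d-1}$ and different exponents), and the Gasper measure needed to linearize each factor is $m_{\cdot,\cdot;\,\alpha_j,E_j+2N_j}$, which varies with $n$, whereas the variables $\Phi_j$ in the recursion (\ref{zj})--(\ref{x*j}) are drawn from the fixed, $n$-independent measures $dm_{x_j^*,y_j^*;\,\alpha_j,\sum_{i<j}\alpha_i}$. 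So ``Gasper handles the outer factor, induction handles the inner factor'' does not reproduce $R_n^\alpha(x)R_n^\alpha(y)$ for individual $n$; if it did, you would have produced an elementary new proof of Theorem \ref{th:ks91} itself, which is a sign that the hard work has been deferred rather than done. A second problem is that $m_{x,y;\alpha}$ is by definition the law of the scalar $Z_d\in[0,1]$; the ``lift'' to a measure on $\Delta_{(d-1)}$ via the joint law of $(\Phi_2,\ldots,\Phi_d)$ is never specified, yet the identity you want to verify only makes sense once that lift is fixed.

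The paper avoids both difficulties by never touching individual multivariate polynomials. It observes that both measures are absolutely continuous with respect to the univariate law $D_{\alpha_d,|\alpha|-\alpha_d}$ of the last coordinate, and that they assign the same integral to each one-dimensional polynomial $R_{|n|}^{\alpha_d,|\alpha|-\alpha_d}(z_d)$, namely $Q_{|n|}^\alpha(x,y)/\zeta_{|n|}^{\alpha_d,|\alpha|-\alpha_d}$: on one side this is Proposition \ref{PROP:1}, on the other it follows from Theorem \ref{th:ks91} after summing over $|m|=|n|$ and using $Q_{|n|}^\alpha(z,e_d)=\zeta_{|n|}^{\alpha_d,|\alpha|-\alpha_d}R_{|n|}^{\alpha_d,|\alpha|-\alpha_d}(z_d)$. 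Completeness of the univariate Jacobi system then identifies the two densities. If you want to rescue your argument, replace the individual $R_n^\alpha$ by the kernels $Q_{|n|}^\alpha(\cdot,e_d)$, which collapse to functions of $z_d$ alone and for which both integral representations are already available.
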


\begin{pf}
%Notice that both $m^{*}_{\mb{x},\mb{y};\alpha}$ and $m_{\mb{x},\mb{y};
%absolutely continuous with respect to
%$D_{\alpha_d,|\alpha|-\alpha_d}.$ Denote their densities with
% $\mu^{*}_{\mb{x},\mb{y};\alpha}(z):=\frac{\mathrm{d}u^{*}_{\mb{x},\mb{y};
% and $\mu_{\mb{x},\mb{y};\alpha}(z)=\frac{\mathrm{d}u_{\mb{x},\mb{y};
From Proposition~\ref{PROP:1},
\[
Q_{ n}^{\alpha}(\mb{x},\mb{y})=\zeta_{ n}^{\alpha_{d},|\alpha
|-\alpha
_{d}}\mbb{E}\bigl(
R_{ n}^{\alpha_{d},|\alpha|-\alpha_{d}}(Z_{d})\mu_{\mb{x},\mb
{y};\alpha
}(Z_d)\bigr).
\]
Now, by uniqueness,
%
%
%e5.25 ###
\begin{eqnarray}
Q_{ n}^{\alpha}(\mb{x},\mb{y})&=&\sum_{|\mb{m}|= n}Q_{\mb
{m}}^{\alpha
}(\mb{x})Q_{\mb{m}}^{\alpha}(\mb{y})
\nonumber
\\[-8pt]
\\[-8pt]
\nonumber
&=&\sum_{|\mb{m}|= n}\zeta_{\mb{m}}^{\alpha}R_{\mb{m}}^{\alpha
}(\mb
{x})R_{\mb{m}}^{\alpha}(\mb{y}),
\end{eqnarray}
where $\zeta^{\alpha}_{\mb{n}}:=\mbb{E}(R_{\mb{n}}^\alpha)^{-2}.$

So, by Theorem~\ref{th:ks91} and because $R_{\mb{n}}(e_d)=1,$
%
%
%e5.26 ###
\begin{eqnarray}\label{pfmg2}
Q_{ n}^{\alpha}(\mb{x},\mb{y})&=&\int\biggl(\sum_{|\mb{m}|= n}\zeta
_{\mb
{m}}^{\alpha}R_{\mb{m}}^{\alpha}(\mb{z})\biggr)\,\mathrm{d}m^{*}_{\mb{x},\mb
{y};\alpha}(\mb{z})
\nonumber
\\[-8pt]
\\[-8pt]
\nonumber
&=&\int Q^{\alpha}_{ n}(\mb{z},e_{d}) \,\mathrm{d}m^{*}_{\mb{x},\mb{y};\alpha
}(\mb{z}),
\end{eqnarray}
where $Q_{\mb{n}}^{\alpha}$ are orthonormal polynomials. But we know that
\[
Q_{ n}^{\alpha}(\mb{z},e_{d})=\zeta_{ n}^{\alpha_{d},|\alpha
|-\alpha
_{d}} R_{ n}^{\alpha_{d},|\alpha|-\alpha_{d}}(z_{d}),
\]
so
%
%
%e5.27 ###
\begin{eqnarray}\label{must}
Q_{ n}^{\alpha}(\mb{x},\mb{y})&=&\zeta_{ n}^{\alpha_{d},|\alpha
|-\alpha
_{d}}\mbb{E}\bigl(
R_{ n}^{\alpha_{d},|\alpha|-\alpha_{d}}(Z_{d})\mu_{x,y;\alpha
}(Z_d)\bigr)
\nonumber
\\[-8pt]
\\[-8pt]
\nonumber
&=&\zeta_{ n}^{\alpha_{d},|\alpha|-\alpha_{d}}\mbb{E}\bigl(
R_{ n}^{\alpha_{d},|\alpha|-\alpha_{d}}(Z_{d})\mu^*_{x,y;\alpha
}(Z_d)\bigr).
\end{eqnarray}
Thus both $\mu_{x,y;\alpha}(z)$ and $\mu^*_{x,y;\alpha}(z)$ have
the same Riesz--Fourier expansion
\[
\sum_{ n=0}^{\infty}Q_{ n}^{\alpha}(\mb{x},\mb{y})R_{ n}^{\alpha
_{d},|\alpha|-\alpha_{d}}(z),
\]
and this completes the proof.
\end{pf}

%s6 ###
\section{Integral representations for Hahn
polynomial kernels}\label{sec:irhpk}

Intuitively it is easy now to guess that a discrete integral
representation for Hahn polynomial kernels, similar to that shown by Proposition
\ref{PROP:1} for Jacobi kernels, should hold for any \mbox{$d\geq2$}. We can
indeed use Proposition~\ref{PROP:1} to derive such a representation. We
need to reconsider formula~(\ref{transform}) for Hahn polynomial in the
following version:
%
%e6.1 ###
\begin{equation}
\wt{h}_{\mb{n}}^{\alpha}(\mb{r};N):=\int R_{\mb{n}}^{\alpha}(\mb
{x})D_{\alpha+\mb{r}}(\mathrm{d}\mb{x})=\frac{h^{0}_{\mb{n}}(\mb
{r};N)}{\sqrt
{\zeta^\alpha_{\mb{n}}}}, \qquad  \mb{r}\in N\Delta_{(d-1)}, \label{wth}
\end{equation}
with the new coefficient of orthogonality
%
%
%e6.2 ###
\begin{equation}\label{omega}
\frac{1}{\omega^{\alpha}_{\mb{n},N}}:=\mbb{E}[\wt{h}_{\mb
{n}}^{\alpha}(\mb{R};N)]^2
=\frac{N_{[ n]}}{(|\alpha|+ r)_{(
n)}}\frac{1}{\zeta_{\mb{n}}^{\alpha}}.
\end{equation}
Formula (\ref{wth}) is equivalent to %saying that $\{\wt{h}_{n}^{
%Jacobi polynomials :
%
%
%e6.3 ###
\begin{equation}
R_{\mb{n}}^{\alpha}(\mb{x})=\frac{(|\alpha|+N)_{( n)}}{N_{[
n]}}\sum
_{|\mb{m}|=N}\wt{h}_{ n}^\alpha(\mb{m};N)\pmatrix{N\cr\mb{m}}\mb
{x}^{\mb{m}},
\qquad \alpha\in\mbb{R}^{d},\mb{x}\in\Delta_{(d-1)} \label{bbj}
\end{equation}
(see~\cite{GS08}, Section~5.2.1 for a proof).
\begin{proposition}
\label{prp:hintf} For $\alpha\in\mbb{R}^{d}$ satisfying the same
conditions as in Proposition~\ref{PROP:1}, a~representation for
the Hahn polynomial kernels is
%
%
%e6.4 ###
\begin{eqnarray}\label{hint}
&& H^{\alpha}_{ n}(\mb{r},\mb{s})=\omega^{\alpha_d,|\alpha|-\alpha
_d}_{{n},N}\frac{(|\alpha|+N)_{( n)}}{N_{[ n]}}\mbb{E}_{\mb{r},\mb
{s}}\bigl[\wt{h}^{\alpha_d,|\alpha|-\alpha_d}_{ n}(K;N)\bigr],
\nonumber
\\[-8pt]
\\[-8pt]
\nonumber
&&\quad n\leq
|\mb{r}|= |\mb{s}|=N,\alpha\in\mbb{R}^d,
\end{eqnarray}
where the expectation is taken with respect to the measure
%
%
%e6.5 ###
\begin{equation}
u_{\mb{r},\mb{s};\alpha}(k):=\int_{\Delta_{(d-1)}}\int_{\Delta_{(d-1)}}
\mbb{E}\biggl[\pmatrix { r\cr k}Z_d^k (1-Z_d)^{ r-k}  \big|  \mb{x},\mb
{y}\biggr] D_{\alpha+\mb{r}}(\mathrm{d}\mb{x})D_{\alpha+\mb{s}}(\mathrm{d}\mb{y}), \label{urs}
\end{equation}
where $Z_d,$ for every $\mb{x},\mb{y}$, is the random variable defined
recursively as in Proposition~\ref{PROP:1}.
\end{proposition}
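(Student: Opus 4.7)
The plan is to chain together three ingredients already established in the paper: the posterior-mixture representation of the Hahn kernel (Proposition \ref{pr:KDM}), the integral representation of the Jacobi kernel via the random variable $Z_d$ (Proposition \ref{PROP:1}), and the identity (\ref{bbj}) which expresses a univariate shifted Jacobi polynomial as a linear combination of binomial monomials weighted by Hahn polynomials.

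First, I would start from Proposition \ref{pr:KDM}, which gives
$$H^{\alpha}_{|n|}(r,s) = \frac{(|\alpha|+|r|)_{(|n|)}}{|r|_{[|n|]}}\int\!\!\int Q^{\alpha}_{|n|}(x,y)\, D_{\alpha+r}(dx)\, D_{\alpha+s}(dy),$$
and then substitute Proposition \ref{PROP:1} together with the identification (\ref{qej}) to rewrite the integrand as
$$Q^{\alpha}_{|n|}(x,y) = \mbb{E}\!\left[Q^{\alpha_d,|\alpha|-\alpha_d}_{|n|}(Z_d,1)\,\big|\,x,y\right] = \zeta^{\alpha_d,|\alpha|-\alpha_d}_{|n|}\,\mbb{E}\!\left[R^{\alpha_d,|\alpha|-\alpha_d}_{|n|}(Z_d)\,\big|\,x,y\right].$$
Since Proposition \ref{PROP:1} requires exactly the same condition on $\alpha$ as the one assumed in the statement, this step is legal.

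Next, I would invoke (\ref{bbj}) in the $d=2$ case, taking the parameter pair to be $(\alpha_d,|\alpha|-\alpha_d)$ and the point to be $(Z_d,1-Z_d)\in\Delta_1$, with $|r|$ as the sample size. This gives
$$R^{\alpha_d,|\alpha|-\alpha_d}_{|n|}(Z_d) = \frac{(|\alpha|+|r|)_{(|n|)}}{|r|_{[|n|]}}\sum_{k=0}^{|r|}\wt{h}^{\alpha_d,|\alpha|-\alpha_d}_{|n|}(k;|r|)\binom{|r|}{k}Z_d^{\,k}(1-Z_d)^{|r|-k}.$$
Plugging this into the previous display, interchanging the (finite) sum over $k$ with the double Dirichlet integral and with the conditional expectation over $Z_d$ (all justified since $Z_d\in[0,1]$ makes every term bounded), and recognizing the resulting integral against $\binom{|r|}{k}Z_d^k(1-Z_d)^{|r|-k}$ as the measure $u_{r,s;\alpha}(k)$ defined in (\ref{urs}), I arrive at
$$H^{\alpha}_{|n|}(r,s) = \left(\frac{(|\alpha|+|r|)_{(|n|)}}{|r|_{[|n|]}}\right)^{\!2}\zeta^{\alpha_d,|\alpha|-\alpha_d}_{|n|}\sum_{k}\wt{h}^{\alpha_d,|\alpha|-\alpha_d}_{|n|}(k;|r|)\,u_{r,s;\alpha}(k).$$

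The last step is a constant check: by the definition (\ref{omega}) of $\omega^{\alpha}_{n,|r|}$ as the reciprocal of the squared norm of $\wt{h}^{\alpha_d,|\alpha|-\alpha_d}_{|n|}$, one has $\omega^{\alpha}_{n,|r|}=\frac{(|\alpha|+|r|)_{(|n|)}}{|r|_{[|n|]}}\zeta^{\alpha_d,|\alpha|-\alpha_d}_{|n|}$, and this collapses the prefactor to the form $\omega^{\alpha}_{n,|r|}\frac{(|\alpha|+|r|)_{(|n|)}}{|r|_{[|n|]}}$ stated in the proposition, while the sum becomes precisely $\mbb{E}_{r,s}[\wt{h}^{\alpha_d,|\alpha|-\alpha_d}_{|n|}(K;|r|)]$ with $K\sim u_{r,s;\alpha}$. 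I do not anticipate a serious obstacle, since all the analytic work is absorbed into Propositions \ref{pr:KDM}, \ref{PROP:1} and identity (\ref{bbj}); the only care needed is consistent bookkeeping of the normalizing constants and a careful application of (\ref{bbj}) with the correct $d=2$ specialization where the multi-index $m=(k,|r|-k)$ reduces to a single binomial index.
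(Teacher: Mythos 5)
Your proposal is correct and follows essentially the same route as the paper's own proof: start from Proposition \ref{pr:KDM}, substitute the integral representation of Proposition \ref{PROP:1} (writing $Q^{\alpha_d,|\alpha|-\alpha_d}_{|n|}(Z_d,1)=\zeta^{\alpha_d,|\alpha|-\alpha_d}_{|n|}R^{\alpha_d,|\alpha|-\alpha_d}_{|n|}(Z_d)$), expand via (\ref{bbj}) in the $d=2$ specialization, and collect the Bernstein-type weights into the measure $u_{r,s;\alpha}$. Your explicit verification that the prefactor collapses to $\omega^{\alpha}_{n,|r|}\frac{(|\alpha|+|r|)_{(|n|)}}{|r|_{[|n|]}}$ via (\ref{omega}) is consistent with the paper, which leaves that bookkeeping implicit.
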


\begin{pf} From (\ref{KDM}),
\begin{eqnarray*}
H^{\alpha}_{ n}(\mb{r},\mb{s}) = \frac{(|\alpha| + N)_{( n)}}{N_{[
n]}}\times \int_{\Delta_{(d-1)}}\int_{\Delta_{(d-1)} } Q_{ n}^{\alpha
}(\mb
{x},\mb{y})D_{\alpha+\mb{r}}(\mathrm{d}\mb{x})D_{\alpha+\mb{s}}(\mathrm{d}\mb{y}).
\end{eqnarray*}
Then (\ref{k1}) implies
\begin{eqnarray*}
H^{\alpha}_{ n}(\mb{r},\mb{s})&=&\frac{\zeta^{\alpha_{d},|\alpha
|-\alpha
_{d}}_{ n}(|\alpha| + N)_{( n)}}{N_{[ n]}}\\
&&{}\times \int_{\Delta_{(d-1)}}\int
_{\Delta_{(d-1)} }\int_0^1
{R^{\alpha_{d},|\alpha|-\alpha_{d}}_{ n}(z_{d})}m_{\mb{x},\mb
{y};\alpha
}(\mathrm{d}z_d)D_{\alpha+\mb{r}}(\mathrm{d}\mb{x})D_{\alpha+\mb{s}}(\mathrm{d}\mb{y}),
\end{eqnarray*}
so, by (\ref{bbj}),
\begin{eqnarray*}
H^{\alpha}_{ n}(\mb{r},\mb{s})&=&\zeta^{\alpha_{d},|\alpha
|-\alpha
_{d}}_{ n}\biggl(\frac{(|\alpha|
+ N)_{( n)}}{N_{[ n]}}\biggr)^2\\
&&{}\times
\sum_{k\leq N}\wt{h}^{\alpha_{d},|\alpha|-\alpha_{d}}_{
n}(k;N)\int
_{\Delta_{(d-1)}}\int_{\Delta_{(d-1)} } \int_0^1\pmatrix{N\cr
k} z_d^k(1-z_d)^{N-k}\\
&&\hspace*{170pt}{}\times m_{\mb{x},\mb{y};\alpha}(\mathrm{d}z_d)D_{\alpha+\mb{r}}(\mathrm{d}\mb{x})D_{\alpha
+\mb
{s}}(\mathrm{d}\mb{y}),
\end{eqnarray*}
and the proof is complete.
\end{pf}

%s7 ###
\section{Positive-definite sequences and polynomial kernels}\label{sec:pdandk}
We can now turn our attention to the problem of identifying
and possibly characterizing positive-definite sequences with respect to
the Dirichlet or Dirichlet multinomial probability distribution. We
will agree with the following definition which restricts the attention
to multivariate positive-definite sequences $\{\rho_{\mb{n}}\dvt\mb
{n}\in
\mbb{Z}^d_+\}$, which depend on $\mb{n}$ only via $ |\mb{n}|.$
\begin{definition}
For every $d\geq2$ and $\alpha\in\mbb{R}_+^d$, call a sequence $\{
\rho
_{ n}\}_{ n=0}^{\infty}$ an $\alpha$-Jacobi positive-definite sequence
($\alpha$-JPDS) if $\rho_0=1$ and, for every $\mb{x},\mb{y}\in
\Delta_{(d-1)},$
%
%
%e7.1 ###
\begin{equation}
p(\mb{x},\mb{y})=\sum_{ n=0}^{\infty}\rho_{ n}Q^\alpha_{ n}(\mb
{x},\mb
{y})\geq0.
\label{jpds}\vadjust{\goodbreak}
\end{equation}
For every $d\geq2,$ $\alpha\in\mbb{R}_+^d$ and $N\in\mbb{Z}_+,$
call a
sequence $\{\rho_{ n}\}_{ n=0}^{\infty}$ an $(\alpha,N)$-Hahn
positive-definite sequence ($(\alpha,N)$-HPDS) if $\rho_0=1$ and, for
every $\mb{r},\mb{s}\in N\Delta_{(d-1)},$
%
%
%e7.2 ###
\begin{equation}
p^H(\mb{r},\mb{s})=\sum_{ n=0}^{\infty}\rho_{ n}H_{ n}(\mb{r},\mb
{s})\geq0.
\label{jpds2}
\end{equation}
\end{definition}

%s7.1 ###
\subsection{Jacobi positivity from the integral representation}\label
{sec:pdandk1}
A consequence of the product formulae (\ref{01pf}) and
(\ref{A9.15.8a}) is a characterization of positive-definite
sequences for the Beta distribution.

The following is a $[0,1]$-version of a theorem proved
by Gasper with respect to Beta measures on $[-1,1]$.

\begin{theorem}[(Bochner~\cite{B54}, Gasper~\cite{GAS72})]
\label{th:2} Let $D_{\alpha,\beta}$ be the Beta distribution on
$[0,1]$ with \mbox{$\alpha\leq\beta$}. If either $1/2\leq\alpha$ or
$\alpha+\beta\geq2$, then a sequence $\rho_{n}$ is
positive-definite for $D_{\alpha,\beta}$ if and only if
%
%
%e7.3 ###
\begin{equation}
\rho_{n}=\int R_{n}^{\alpha,\beta}(z)\nu_{\alpha,\beta}(z)
\label{bpd}
\end{equation}
for a positive measure $\nu$ with support on $[0,1].$ Moreover, if
\[
u(x)=\sum_{n=0}^{\infty}\zeta_{n}^{\alpha,\beta}\rho_nR_n(x)\geq0
\]
with
\[
\sum_{n=0}^{\infty}\zeta_{n}^{\alpha,\beta}|\rho_n|<\infty,
\]
then
%
%
%e7.4 ###
\begin{equation}\label{absu}
\nu(A)=\int_Au(x)D_{\alpha,\beta}(\mathrm{d}x)
\end{equation}
for every Borel set $A\subseteq[0,1].$
\end{theorem}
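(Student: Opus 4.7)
The plan is to prove the two directions separately, leveraging the apparatus of Section \ref{sec:irjpk}. The sufficiency direction is essentially immediate from the three-variable kernel identity and Corollary \ref{cor:1}; the necessity direction requires an Abel regularization followed by a weak-$*$ limit.

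For sufficiency ($\Leftarrow$), suppose $\rho_n = \int_0^1 R_n^{\alpha,\beta}(z)\,\nu(dz)$ for a positive measure $\nu$. Using $Q_n^{\alpha,\beta}(x,y) = \zeta_n^{\alpha,\beta} R_n^{\alpha,\beta}(x) R_n^{\alpha,\beta}(y)$, I would substitute into the definition of positive-definiteness and exchange sum and integral (via Tonelli, once nonnegativity of the summand is verified) to obtain
\[
\sum_{n\geq 0} \rho_n Q_n^{\alpha,\beta}(x,y) \;=\; \int_0^1 K(x,y,z)\,\nu(dz),
\]
where $K$ is the three-variable kernel of (\ref{kfunc}). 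Corollary \ref{cor:1} gives $K(x,y,z) \geq 0$ precisely under Gasper's conditions on $(\alpha,\beta)$, so the right-hand side is nonnegative; the normalization $\rho_0 = 1$ forces $\nu$ to be a probability measure.

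For necessity ($\Rightarrow$), I would first settle the absolutely convergent case treated in the ``moreover'' part. If $\sum \zeta_n^{\alpha,\beta}|\rho_n| < \infty$, define $u(x) := \sum \zeta_n^{\alpha,\beta}\rho_n R_n^{\alpha,\beta}(x)$; the classical bound $|R_n^{\alpha,\beta}(x)| \leq R_n^{\alpha,\beta}(1) = 1$ on $[0,1]$ (valid under Gasper's regime) makes the series converge uniformly, so $y=1$ may be substituted in the inequality $p(x,y) \geq 0$ to deduce $u(x) = p(x,1) \geq 0$. Then $\nu(dx) := u(x)\,D_{\alpha,\beta}(dx)$ is positive, and orthonormality $\int R_n R_m\,dD_{\alpha,\beta} = \delta_{nm}/\zeta_n^{\alpha,\beta}$ immediately gives $\int R_n^{\alpha,\beta}\,d\nu = \rho_n$. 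The general case I would reduce to this one by Abel regularization: the Poisson-kernel sequence $\{r^n\}$ ($0<r<1$) is positive-definite by example (ii) of the Introduction, and composing the regression operator $T_\rho$ with the Poisson operator $P_r$ yields an operator with eigenvalues $r^n\rho_n$ whose integral kernel $p_r(x,y) = \int p(x,w)\,P_r(w,y)\,D_{\alpha,\beta}(dw)$ is manifestly nonnegative. Since $|\rho_n| \leq \rho_0 = 1$ (a contractivity consequence of positive-definiteness) and $\zeta_n^{\alpha,\beta}$ grows only polynomially in $n$, the regularized sequence $\{r^n\rho_n\}$ lies in the absolutely convergent regime, producing positive measures $\nu_r$ with $\int R_n^{\alpha,\beta}\,d\nu_r = r^n\rho_n$. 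Helly's theorem then extracts a weak-$*$ limit $\nu$ as $r\uparrow 1$ (all $\nu_r$ being probability measures on the compact set $[0,1]$), and continuity of $R_n^{\alpha,\beta}$ together with $r^n\rho_n \to \rho_n$ gives $\int R_n^{\alpha,\beta}\,d\nu = \rho_n$.

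The chief obstacle, in my view, is executing the Abel-regularization step cleanly, in particular justifying that $\{r^n\rho_n\}$ remains positive-definite via the composition-of-operators argument: this requires verifying that the (possibly distributional) kernel $p(x,w)$ associated with a rough positive-definite $\{\rho_n\}$ can legitimately be convolved against the smooth Poisson kernel $P_r(w,y)$, and that the resulting series defining $p_r$ converges uniformly enough to permit term-by-term identification of eigenvalues. A Fubini/Tonelli calculation combined with the uniform bound $|R_n^{\alpha,\beta}| \leq 1$ and the polynomial growth of $\zeta_n^{\alpha,\beta}$ should suffice, but care is needed because positive-definiteness is only asserted \emph{pointwise} for $p(x,y)$, not as an $L^1$ condition.
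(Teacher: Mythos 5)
The paper does not actually prove Theorem \ref{th:2}: it defers the ``technicalities'' to Bochner and Gasper and records only a one-display sketch of the sufficiency direction, namely that positivity of $\nu$ gives $p(x,y)=\int_0^1 u(z)\,m_{x,y;\alpha,\beta}(dz)\geq 0$ via the product formula (\ref{01pf}). Your proposal reconstructs the full classical argument, and in outline it is correct: the sufficiency step is the paper's sketch in dual form (you integrate the three-variable kernel against $\nu$, the paper integrates the density $u$ against the positive measure $m_{x,y}$), and your necessity argument --- settle the absolutely convergent case by orthogonality and $u=p(\cdot,1)\geq 0$, then Abel-regularize with the Poisson kernel, use $|\rho_n|\leq 1$ and the polynomial growth of $\zeta_n^{\alpha,\beta}$ to land in that case, and extract a weak-$*$ limit by Helly --- is exactly the Bochner route that the cited references carry out. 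What your version buys is an actual proof of the ``only if'' half, which the paper omits entirely; what the paper's formulation buys is that integrating against the measure $m_{x,y}$ sidesteps any need for the series (\ref{kfunc}) to converge pointwise.

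Two repairs are needed in your sufficiency step. First, the sum--integral exchange cannot be justified by Tonelli: the summands $\zeta_n^{\alpha,\beta}R_n^{\alpha,\beta}(x)R_n^{\alpha,\beta}(y)R_n^{\alpha,\beta}(z)$ are signed, so you must either assume $\sum_n\zeta_n^{\alpha,\beta}|\rho_n|<\infty$ and use Fubini, or Abel-regularize here too (replace $\rho_n$ by $r^n\rho_n$, exchange freely, and let $r\uparrow 1$). Second, Corollary \ref{cor:1} guarantees a nonnegative \emph{density} $K(x,y,z)$ only when $m_{x,y}$ is absolutely continuous, which under the alternative $1/2\leq\alpha$ with $\alpha+\beta<2$ need not hold; for a general positive $\nu$ you should therefore write $p(x,y)=\int\bigl[\sum_n\zeta_n^{\alpha,\beta}\rho_n R_n^{\alpha,\beta}(w)\bigr]m_{x,y;\alpha,\beta}(dw)$, or work with the Abel means of $K$, rather than with $K$ itself. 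The caveat you raise about convolving a merely pointwise-nonnegative $p(x,\cdot)$ with the Poisson kernel is real but harmless within the paper's framework, where positive-definiteness is defined through pointwise nonnegativity of a convergent expansion; with these adjustments the argument goes through.
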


We refer to~\cite{B54,GAS72} for the technicalities
of the
proof. To emphasize the key role played by (\ref{01pf}), just
observe that the positivity of $\nu$ and (\ref{bpd}) entails the
representation
\[
p(x,y):=\sum_{n=0}^{\infty}\zeta_n\rho_nR^{\alpha,\beta
}_{n}(x)R^{\alpha
,\beta}_{n}(y)=\int_0^1u(z)m_{x,y;\alpha,\beta}(\mathrm{d}z)\geq
0,
\]
and $u(z)=p(z,1),$
whenever $u(1)$ is absolutely convergent.

To see the full extent of the
characterization, we recall, in a lemma, an
important property of Jacobi polynomials, namely, that two different
systems of Jacobi polynomials are connected by an integral formula
if their parameters share the same total sum.

\begin{lemma}
\label{l:bor} For $\mu>0$,\vspace*{-2pt}
%
%
%e7.5 ###
\begin{equation}
\int_{0}^{1}{R}_{n}^{\alpha,\beta}\bigl(1-(1-x)z\bigr) D_{\beta,\mu}(\mathrm{d}z)={R}_{n}^{\alpha-\mu,\beta+\mu}(x)
\label{01borr1}\vspace*{-2pt}
\end{equation}
and\vspace*{-2pt}
%
%
%e7.6 ###
\begin{equation}
\int_{0}^{1}{R}_{n}^{\alpha,\beta}(xz) D_{\alpha,\mu}(\mathrm{d}z)=\frac{\zeta_n^{\alpha+\mu,\beta-\mu}}{\zeta
_{n}^{\alpha,\beta}}{R}_{n}^{a+\mu,b-\mu}(x)
\label{01borr2}.\vspace*{-2pt}
\end{equation}
\end{lemma}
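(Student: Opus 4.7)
Both identities will be proved by expanding $R_n^{\alpha,\beta}$ as an explicit hypergeometric series and integrating termwise against the relevant Beta density. The structural observation that makes everything work is that the total parameter $\theta=\alpha+\beta$ is invariant under both shifts $(\alpha,\beta)\mapsto(\alpha-\mu,\beta+\mu)$ and $(\alpha,\beta)\mapsto(\alpha+\mu,\beta-\mu)$, so the factor $(n+\theta-1)_{(k)}$ in each summand of $R_n^{\alpha,\beta}$ does not change; only one denominator Pochhammer symbol shifts by $\mu$, and this shift is produced exactly by the appropriate Beta moment.

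For (\ref{01borr1}), substitute $y=1-(1-x)z$ into $R_n^{\alpha,\beta}(y)={}_2F_1(-n,n+\theta-1;\beta;1-y)$ to obtain
\begin{equation*}
R_n^{\alpha,\beta}(1-(1-x)z)=\sum_{k=0}^{n}\frac{(-n)_{(k)}\,(n+\theta-1)_{(k)}}{(\beta)_{(k)}\,k!}\,(1-x)^k z^k.
\end{equation*}
Since $\int_0^1 z^k\,D_{\beta,\mu}(dz)=(\beta)_{(k)}/(\beta+\mu)_{(k)}$, termwise integration cancels the $(\beta)_{(k)}$ and replaces it by $(\beta+\mu)_{(k)}$. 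Because $\theta$ is preserved, the resulting series is exactly the hypergeometric expansion of $R_n^{\alpha-\mu,\beta+\mu}(x)$.

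For (\ref{01borr2}) a \emph{dual} expansion of $R_n^{\alpha,\beta}$ in powers of $x$ (rather than of $1-x$) is needed. Combining the Jacobi reflection $P_n^{(a,b)}(-y)=(-1)^n P_n^{(b,a)}(y)$ with (\ref{01J}) and $P_n^{(\alpha-1,\beta-1)}(1)=(\alpha)_{(n)}/n!$, one finds
\begin{equation*}
R_n^{\alpha,\beta}(x)=\frac{(-1)^n(\alpha)_{(n)}}{(\beta)_{(n)}}\,R_n^{\beta,\alpha}(1-x)=\frac{(-1)^n(\alpha)_{(n)}}{(\beta)_{(n)}}\sum_{k=0}^{n}\frac{(-n)_{(k)}\,(n+\theta-1)_{(k)}}{(\alpha)_{(k)}\,k!}\,x^k.
\end{equation*}
Replacing $x$ by $xz$ and using $\int_0^1 z^k\,D_{\alpha,\mu}(dz)=(\alpha)_{(k)}/(\alpha+\mu)_{(k)}$ shifts the denominator from $(\alpha)_{(k)}$ to $(\alpha+\mu)_{(k)}$. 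Rereading the output through the same dual expansion applied to $R_n^{\alpha+\mu,\beta-\mu}(x)$ (again $\theta$ is preserved) gives a proportionality constant $(\alpha)_{(n)}(\beta-\mu)_{(n)}/[(\beta)_{(n)}(\alpha+\mu)_{(n)}]$.

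Finally, one checks that this constant equals $\zeta_n^{\alpha+\mu,\beta-\mu}/\zeta_n^{\alpha,\beta}$. By (\ref{zetan}), $\zeta_n^{\alpha,\beta}$ depends on $(\alpha,\beta)$ only through $\theta$ and through the ratio $(\alpha)_{(n)}/(\beta)_{(n)}$, so invariance of $\theta$ yields
\begin{equation*}
\frac{\zeta_n^{\alpha+\mu,\beta-\mu}}{\zeta_n^{\alpha,\beta}}=\frac{(\alpha)_{(n)}/(\beta)_{(n)}}{(\alpha+\mu)_{(n)}/(\beta-\mu)_{(n)}}=\frac{(\alpha)_{(n)}(\beta-\mu)_{(n)}}{(\beta)_{(n)}(\alpha+\mu)_{(n)}},
\end{equation*}
matching the constant found above. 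The only mildly nonroutine step in the plan is the derivation of the Pfaff-type power-of-$x$ series for $R_n^{\alpha,\beta}$ used in the second identity; this follows from the standard Jacobi reflection symmetry and extends to all real $\alpha,\beta>0$ by analytic continuation in the parameters.
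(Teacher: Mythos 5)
Your proof is correct and follows essentially the same route as the paper's: termwise integration of an explicit power-series expansion against Beta moments, exploiting that $\theta=\alpha+\beta$ is invariant under the parameter shift, combined with the reflection identity $R_n^{\alpha,\beta}(x)=R_n^{\beta,\alpha}(1-x)/R_n^{\beta,\alpha}(0)$. The only cosmetic difference is that the paper expands $Q_n^{\alpha,\beta}(\cdot,1)$ through the kernel coefficients $a_{nm}$ (which depend only on $\theta$), proves (\ref{01borr2}) first and deduces (\ref{01borr1}) by reflection, whereas you work directly with the ${}_{2}F_1$ series and reverse the order.
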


\begin{pf}
We provide here a probabilistic proof in terms of polynomial
kernels $Q_{ n}^{\alpha,\beta}(x,y)$, even though the two
integrals can also be view as a reformulation, in terms of the
shifted polynomials $R_{n}^{\alpha,\beta},$ of known integral
representations for the Jacobi polynomials $\{P^{a,b}_n\}$ on
$[-1,1]$ $(a,b>-1)$ (see, e.g.,~\cite{AAR99} ff. formulae~7.392.3 and formulae~7.392.4).

Let us start with (\ref{01borr2}). The moments of a $\operatorname{Beta}
(\alpha,\beta)$ distribution on $[0,1]$ are, for every integer
$m\leq n=0,1,\ldots$\vspace*{-2pt}
\[
\mbb{E}[X^{m}(1-X)^{n-m}]=\frac{\ifact{\alpha}{m}\ifact{\beta
}{n-m}}{\ifact{(\alpha+\beta)}{n}}.\vspace*{-2pt}
\]
Now, for every $n\in\mbb{N},$\vspace*{-2pt}
%
%e7.9 ###
%e7.8 ###
%e7.7 ###
\begin{eqnarray}\label{bor2pr}
\int_{0}^{1}\zeta^{\alpha,\beta}_{n}{R}_{n}^{\alpha,\beta
}(xz)D_{\alpha
,\mu}(\mathrm{d}z)&=&\int_{0}^{1}{Q}_{n}^{\alpha,\beta}(xz,1)D_{\alpha,\mu
}(\mathrm{d}z) \nonumber\\[-3pt]
&=&\sum_{m\leq
n}a_{nm}\frac{\ifact{(\alpha+\beta)}{m}}{\ifact{(\alpha)}{m}}\int
_0^1(xz)^mD_{\alpha,\mu}(\mathrm{d}z)
\nonumber
\\[-9.5pt]
\\[-9.5pt]
\nonumber
&=& \sum_{m\leq
n}a_{nm}\frac{\ifact{(\alpha+\beta)}{m}}{\ifact{(\alpha
)}{m}}\frac
{\ifact{(\alpha)}{m}}{\ifact{(\alpha+\mu)}{m}}x^m\\[-3pt]
&=&\zeta_n^{\alpha
+\mu
,\beta-\mu}{R}_{n}^{a+\mu,b-\mu}(x),\nonumber\vspace*{-2pt}
\end{eqnarray}
and this proves (\ref{01borr2}).

To prove (\ref{01borr1}), simply remember (see, e.g.,~\cite{GS08},
Section~3.1) that\vspace*{-2pt}
\[
R_{n}^{\alpha,\beta}(0)=(-1)^{n}\frac{\ifact{\alpha}{n}}{\ifact
{\beta}{n}}\vspace*{-2pt}
\]
and that\vspace*{-2pt}
\[
R_n^{\alpha,\beta}(x)=\frac{R_n^{\beta,\alpha}(1-x)}{R_{n}^{\beta
,\alpha}(0)}.\vspace*{-2pt}
\]
So we can use (\ref{01borr2}) to see that\vspace*{-2pt}
%
%
%e7.10 ###
\begin{eqnarray}
\int_0^1\frac{R_n^{\beta,\alpha}((1-x)z)}{R_n^{\beta,\alpha}(0)}D_{\beta
,\mu}(\mathrm{d}z)&=&(-1)^n\frac{\ifact{\alpha}{n}}{\ifact{\beta}{n}}
\frac{\zeta_n^{\beta+\mu,\alpha-\mu}}{\zeta_n^{\beta,\alpha}}R_n^{\beta
+\mu,\alpha-\mu}(1-x)\nonumber
\\[-9.5pt]
\\[-9.5pt]
\nonumber
&=&\zeta_n^{\alpha-\mu,\beta+\mu}(x),\vspace*{-2pt}
\end{eqnarray}
and the proof is complete.\vadjust{\goodbreak}
\end{pf}

Lemma~\ref{l:bor} completes Theorem~\ref{th:2}:

\begin{corollary}
\label{cor:bpd} Let $\alpha\leq\beta$ with $\alpha+\beta\geq2.$
If a
sequence $\rho_{n}$ is positive-definite for $D_{\alpha,\beta},$ then
it is
positive-definite for $D_{\alpha+\mu,\beta-\mu},$ for any $0\leq
\mu\leq
\beta$.
\end{corollary}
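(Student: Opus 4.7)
The plan is to combine Theorem \ref{th:2} with the integral identity (\ref{01borr1}) of Lemma \ref{l:bor} to transfer a positive representing measure across the Jacobi parameter family, keeping the sum $\alpha+\beta$ fixed. First I would invoke the ``only if'' direction of Theorem \ref{th:2}: since $\rho_n$ is positive-definite for $D_{\alpha,\beta}$ and the hypotheses on $(\alpha,\beta)$ hold by assumption, there exists a positive measure $\nu$ on $[0,1]$ such that
\begin{equation*}
\rho_n=\int_0^1 R_n^{\alpha,\beta}(x)\,\nu(dx),\qquad n=0,1,\dots.
\end{equation*}

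Next I would apply (\ref{01borr1}) with the parameters shifted by $\mu$, i.e.\ writing it for $(\alpha+\mu,\beta-\mu)$ in the roles of $(\alpha,\beta)$, which gives the identity
\begin{equation*}
R_n^{\alpha,\beta}(x)=\int_0^1 R_n^{\alpha+\mu,\beta-\mu}\bigl(1-(1-x)z\bigr)\,D_{\beta-\mu,\mu}(dz).
\end{equation*}
The crucial point is that the mixing measure $D_{\beta-\mu,\mu}$ is a (positive) probability measure on $[0,1]$ for $0<\mu<\beta$, with the endpoints $\mu=0$ and $\mu=\beta$ handled trivially. Substituting into the representation of $\rho_n$ and applying Fubini, which is legitimate as all quantities are integrable against a probability measure, yields
\begin{equation*}
\rho_n=\int_0^1 R_n^{\alpha+\mu,\beta-\mu}(w)\,\nu'(dw),
\end{equation*}
where $\nu'$ is the push-forward of the positive product measure $\nu\otimes D_{\beta-\mu,\mu}$ under $(x,z)\mapsto 1-(1-x)z\in[0,1]$, hence itself a positive measure. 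A final application of the ``if'' direction of Theorem \ref{th:2} to $D_{\alpha+\mu,\beta-\mu}$ then concludes that $\rho_n$ is positive-definite for the shifted Beta distribution.

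\textbf{Main obstacle.} The conceptual step is seeing that identity (\ref{01borr1}), read in the direction $R^{\alpha,\beta}\mapsto R^{\alpha+\mu,\beta-\mu}$, is exactly the right tool: it expresses each ``source'' Jacobi polynomial as a Dirichlet-mixture (with positive weights) of ``target'' Jacobi polynomials with the same total parameter sum. Once this is in hand, the rest is bookkeeping. The only technicality is checking the hypotheses of Theorem \ref{th:2} for the new pair $(\alpha+\mu,\beta-\mu)$: the sum condition $(\alpha+\mu)+(\beta-\mu)=\alpha+\beta\geq 2$ is automatic, while the ordering hypothesis $\alpha+\mu\leq\beta-\mu$ can fail when $\mu>(\beta-\alpha)/2$; in that range one reduces to the ordered case via the elementary symmetry $R_n^{a,b}(x)\propto R_n^{b,a}(1-x)$, which identifies positive-definiteness for $D_{a,b}$ with that for $D_{b,a}$.
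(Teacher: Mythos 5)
Your argument is correct in its core mechanism and lands where the paper's proof does, but it runs the machinery of Lemma \ref{l:bor} in the dual direction. The paper transports the nonnegative \emph{density}: starting from $u_{\alpha,\beta}(x)=\sum_n\zeta_n^{\alpha,\beta}\rho_nR_n^{\alpha,\beta}(x)\geq 0$ (the characterization in Theorem \ref{th:2}) and integrating $u_{\alpha,\beta}(xz)$ against $D_{\alpha,\mu}(dz)$, it uses (\ref{01borr2}), whose constant is exactly the ratio $\zeta_n^{\alpha+\mu,\beta-\mu}/\zeta_n^{\alpha,\beta}$, so that $u_{\alpha,\beta}\geq 0$ becomes $u_{\alpha+\mu,\beta-\mu}\geq 0$ with no further bookkeeping. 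You instead transport the representing \emph{measure} $\nu$, reading (\ref{01borr1}) backwards (with $(\alpha+\mu,\beta-\mu)$ as source parameters) to exhibit $R_n^{\alpha,\beta}$ as a coefficient-free probability mixture of $R_n^{\alpha+\mu,\beta-\mu}$, and then push $\nu\otimes D_{\beta-\mu,\mu}$ forward. The two routes are adjoint to one another; yours avoids any manipulation of the constants $\zeta_n$, while the paper's avoids introducing the push-forward. Both are legitimate.

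One caveat on your final remark about the unordered range $\mu>(\beta-\alpha)/2$. The pointwise symmetry is $R_n^{a,b}(x)=R_n^{b,a}(1-x)/R_n^{b,a}(0)$ with $R_n^{b,a}(0)=(-1)^n(b)_{(n)}/(a)_{(n)}$, so the constant of proportionality alternates in sign with $n$; it therefore does \emph{not} convert a positive measure representing $\rho_n$ against $\{R_n^{a,b}\}$ into one against $\{R_n^{b,a}\}$, which is what Theorem \ref{th:2} for the ordered pair would require. What genuinely is symmetric is the bivariate positivity itself, via $Q_n^{a,b}(x,y)=Q_n^{b,a}(1-x,1-y)$ (formula (\ref{invert})); to exploit it you should aim your mixture step directly at the ordered target, i.e.\ use (\ref{01borr1}) with source $(\beta-\mu,\alpha+\mu)$ and shift $\beta-\alpha-\mu$ to write $R_n^{\alpha,\beta}$ as a probability mixture of $R_n^{\beta-\mu,\alpha+\mu}$, which works whenever $\mu\leq\beta-\alpha$. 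For $\mu>\beta-\alpha$ the point is genuinely delicate and the paper's own two-line proof glosses over it as well, so this is not a defect specific to your write-up; but the sentence as stated claims more than the symmetry delivers.
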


\begin{pf}
By Theorem~\ref{th:2} $\rho_{n}$ is positive-definite for
$D_{\alpha,\beta}$ if and only if
\[
\sum_n\zeta_n^{\alpha,\beta}\rho_nR_n^{\alpha,\beta}(x)\geq0.
\]
So (\ref{01borr2}) implies also
\[
\sum_n\zeta_n^{\alpha,\beta}\rho_n\frac{\zeta_n^{\alpha+\mu
,\beta-\mu
}}{\zeta_{n}^{\alpha,\beta}}R_n^{\alpha+\mu,\beta-\mu}(x)\geq0.
\]
The case for $D_{\alpha-\mu,\beta+\mu}$ is proved similarly, but
using (\ref{01borr1}) instead of (\ref{01borr2}).
%The last part follows trivially from
%Q_{n}^{\alpha,\beta}(x,y)=Q_{n}^{\beta,\alpha}(1-x,1-y).
\end{pf}

For $d>2$, Proposition~\ref{PROP:1} leads to a similar
characterization
of all positive-definite sequences, for the Dirichlet distribution,
which are indexed only by their total degree, that is, all sequences
$\rho_{\mb{n}}=\rho_{|\mb{n}|}$.

\begin{proposition}
\label{pdprop} Let $\alpha\in{R}^{d}$ satisfy the same conditions as in
Proposition~\ref{PROP:1}. A sequence
$\{\rho_{n}=\rho_{ n}\dvt n\in\mathbb{N}\}$ is positive-definite for the
$\operatorname{Dirichlet} (\alpha)$ distribution if and only if it is positive-definite
for $D_{c|\alpha|, (1-c)|\alpha|}$, for every $c\in(0,1).$
\end{proposition}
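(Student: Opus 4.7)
Both directions will exploit the integral representation of Proposition \ref{PROP:1} together with tools already developed for the two-dimensional case (Corollaries \ref{cor:1} and \ref{cor:bpd}).

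For the sufficiency ($\Leftarrow$), the plan is to use Proposition \ref{PROP:1} with the specific splitting $c = \alpha_d/|\alpha|$. By hypothesis the kernel
\[
\sum_{|n|=0}^\infty \rho_{|n|} Q^{\alpha_d,\,|\alpha|-\alpha_d}_{|n|}(z,w)\;\geq\;0\qquad (z,w\in[0,1]),
\]
and in particular this non-negativity persists when we set $w=1$. Proposition \ref{PROP:1} then yields, for every $x,y\in\Delta_{(d-1)}$,
\[
\sum_{|n|=0}^\infty \rho_{|n|} Q^{\alpha}_{|n|}(x,y)
=\mathbb{E}\!\left[\sum_{|n|=0}^\infty \rho_{|n|} Q^{\alpha_d,\,|\alpha|-\alpha_d}_{|n|}(Z_d,1)\,\bigg|\,x,y\right]\;\geq\;0,
\]
since the $Z_d$-integrand is non-negative and $Z_d$ is a probability random variable. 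This step is essentially immediate once Proposition \ref{PROP:1} is in place.

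For the necessity ($\Rightarrow$), the idea is first to reduce to a single bivariate Dirichlet and then to sweep the full interval $c\in(0,1)$ by using Corollary \ref{cor:bpd}. Concretely, choose the $0$-$1$ matrix $A$ of Proposition \ref{prp:col} (see Example \ref{ex:proj}) that collapses coordinates $1,\ldots,d-1$ into a single coordinate, so that $A\alpha=(|\alpha|-\alpha_d,\alpha_d)$. For any $x',y'\in[0,1]$ and any preimages $x,y$ with $Ax=x'$, $Ay=y'$, Proposition \ref{prp:col} gives
\[
\sum_{|n|}\rho_{|n|}Q^{A\alpha}_{|n|}(x',y')
=\mathbb{E}\!\left[\sum_{|n|}\rho_{|n|}Q^{\alpha}_{|n|}(X,y)\,\bigg|\,AX=x'\right]\geq 0,
\]
so $\rho$ is positive-definite for the bivariate Dirichlet $D_{A\alpha}$. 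Since $|A\alpha|=|\alpha|$, and the hypotheses of Proposition \ref{PROP:1} force $A\alpha$ to satisfy Gasper's conditions of Corollary \ref{cor:1}, Corollary \ref{cor:bpd} (applied in both the ``shift up'' and ``shift down'' forms indicated in its proof) lets us move the mass between the two parameters while keeping their sum equal to $|\alpha|$, thereby producing positive-definiteness for $D_{c|\alpha|,(1-c)|\alpha|}$ for every $c\in(0,1)$.

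The main obstacle I anticipate is bookkeeping around the range of parameters accessible via Corollary \ref{cor:bpd}: its stated form only shifts $D_{\alpha,\beta}$ to $D_{\alpha+\mu,\beta-\mu}$ with $0\le\mu\le\beta$, and one has to invoke its symmetric counterpart to shift in the opposite direction; together these do cover the full one-parameter family $\{D_{c|\alpha|,(1-c)|\alpha|}:c\in(0,1)\}$. A secondary point to verify is that the conditions on $\alpha$ in Proposition \ref{PROP:1} actually imply $|\alpha|\ge 2$ (or that the alternative $\alpha\ge 1/2$ clause of Corollary \ref{cor:1} applies), so that Corollary \ref{cor:bpd} is legitimately available for the relevant parameters; this is a routine check, not a real difficulty.
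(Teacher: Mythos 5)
Your proof is correct and follows essentially the same route as the paper's: sufficiency via the integral representation of Proposition \ref{PROP:1}, necessity via aggregation of coordinates (Proposition \ref{prp:col}) to a bivariate Dirichlet with total parameter $|\alpha|$, followed by the parameter-shifting Corollary \ref{cor:bpd} applied in both directions. The one substantive difference is in the sufficiency step: the paper starts from positive-definiteness for an arbitrary single $c$ and uses Corollary \ref{cor:bpd} to move the pair $(c|\alpha|,(1-c)|\alpha|)$ to $(\alpha_d,|\alpha|-\alpha_d)$ before invoking Proposition \ref{PROP:1}, whereas you exploit the ``for every $c$'' hypothesis to choose $c=\alpha_d/|\alpha|$ outright; your shortcut proves the stated equivalence, but the paper's argument yields the slightly stronger fact that positive-definiteness for one admissible $c$ already forces it for $D_\alpha$. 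The caveats you flag at the end --- needing the symmetric (downward-shift) form of Corollary \ref{cor:bpd}, and checking that Gasper's conditions hold for the aggregated pair so that the corollary is available --- are exactly the points the paper's own proof also relies on without further comment.
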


\begin{pf}
\textit{Sufficiency}. First notice that, since
%
%
%e7.11 ###
\begin{equation}
Q_{n}^{\alpha,\beta}(x,y)=Q_{n}^{\beta,\alpha}(1-x,1-y), \label{invert}
\end{equation}
then a sequence is positive-definite for $D_{\alpha,\beta}$ if and only
if it is positive definite for $D_{\beta,\alpha}$, so that we can
assume, without loss of generality, that $c|\alpha|\leq(1-c)|\alpha|.$
Let $\alpha=(\alpha_1\geq\alpha_2\geq\cdots\geq\alpha_d)$
satisfy the
conditions of Proposition~\ref{PROP:1} (again, the decreasing order is
assumed for simplicity) and let
\[
\sum_{ n=0}^{\infty}\rho_{ n}Q_{ n}^{c|\alpha|,(1-c)|\alpha
|}(u,v)\geq
0,\qquad u,v\in[0,1].
\]
If $\alpha_{d}>c_{|\alpha|}$
then Corollary~\ref{cor:bpd}, applied with
$\mu=\alpha_{d}-c_{|\alpha|}$ implies that
\[
\sum_{ n=0}^{\infty}\rho_{ n}Q_{ n}^{\alpha_{d},|\alpha|-\alpha
_{d}}(u,v)\geq0
\]
so by Proposition~\ref{PROP:1}
%
%
%e7.12 ###
\begin{equation}
0\leq\int
\Biggl[\sum_{ n=0}^{\infty}\rho_{ n}Q_{ n}^{\alpha_{d},|\alpha|-\alpha
_{d}}(z_{d},1)\Biggr]m_{\mb{x},\mb{y};\alpha}(\mathrm{d}z_d)=\sum_{ n=0}^{\infty
}\rho_{ n}Q_{ n}^{\alpha}(\mb{x},\mb{y}),\qquad x,y\in\Delta_{(d-1)}.
\label{dpd1}
\end{equation}
If $\alpha_{d}<c_{|\alpha|},$ then apply Corollary~\ref{cor:bpd} with
$\mu
=|\alpha|(1-c)-\alpha_d$ to obtain
\[
\sum_{ n=0}^{\infty}\rho_{ n}Q_{ n}^{|\alpha|-\alpha_{d},\alpha
_{d}}(u,v)=\sum_{ n=0}^{\infty}\rho_{ n}Q_{ n}^{\alpha_{d},|\alpha
|-\alpha_{d}}(1-u,1-v)\geq0,
\]
which implies again (\ref{dpd1}), thus $\{\rho_{ n}\}$ is
positive-definite for $D_{\alpha}$.

\textit{Necessity.} For $I\subset\{1,\ldots,d\}$, the
random variables
\[
X_{I}=\sum_{j\in I}X_{j}; \qquad Y_{I}=\sum_{j\in I}Y_{j}
\]
have a $\operatorname{Beta}(\alpha_{I},|\alpha|-\alpha_{I})$ distribution, where
$\alpha_{I}=\sum_{j\in I}\alpha_{j}$. Since
\[
\mbb{E}\bigl(Q^{\alpha}_{n}(\mb{X},\mb{Y})|Y_{I}=z\bigr)=Q_{n}^{\alpha_{I}}(z),
\]
then for arbitrary $x,y\in\Delta_{(n-1)},$
\[
\sum_{ n=0}^{\infty}\rho_{ n}Q_{ n}^{\alpha}(\mb{x},\mb{y})\geq0
\]
implies
\[
Q_{ n}^{\alpha_{I},|\alpha|-\alpha_{I}}(x,y)=Q_{ n}^{|\alpha
|-\alpha
_{I},\alpha_{I}}(1-x,1-y)\geq0.
\]
Now we can apply once again
Corollary~\ref{cor:bpd} with $\mu=\pm(c|\alpha|-\alpha_{I})$
(whichever is
positive) to obtain, with the possible help of (\ref{invert}),
\[
\sum_{ n=0}^{\infty}\rho_{ n}Q_{ n}^{c|\alpha|,(1-c)|\alpha
|}(u,v)\geq
0,\qquad u,v\in[0,1].
\]
\upqed\end{pf}

%s8 ###
\section{A probabilistic derivation of Jacobi positive-definite
sequences}\label{sec:prob}
In the previous sections we have found characterizations of Dirichlet
positive-definite sequences holding only if the parameters satisfied a
particular set of constraints. Here we show some sufficient conditions
for a sequence to be $\alpha$-JPDS, not requiring any constraints on
$\alpha.$ Thus we will identify a convex set of Jacobi
positive-definite sequences satisfying the property (P1), as shown in
Proposition~\ref{pr:gspd}. This is done by exploiting the probabilistic
interpretation of the orthogonal polynomial kernels. Let us reconsider
the function $\xi_{ m}^{\alpha}.$
%From (\ref{xin}) we can write, for every $ m\in\mbb{N},$
%and $x,y\in\Delta_{d-1},$
%This is, for every $ m,$ a transition kernel, expressed as a
%mixture of posterior Dirichlet distributions, with multinomial
%mixing measure. Similarly, because of the symmetry of
%$\xi_{ m}^{\alpha}$, for any fixed $ m,$
The bivariate measure
%
%
%e8.1 ###
\begin{equation}
\mathcal{BD}_{\alpha, m}(\mathrm{d}\mb{x},\mathrm{d}\mb{y}):=\xi^{\alpha}_{ m}(\mb
{x},\mb
{y})D_{\alpha}(\mathrm{d}\mb{x})D_{\alpha}(\mathrm{d}\mb{y}),
\label{bdm}
\end{equation}
so $\xi^{\alpha}_{ m}(\mb{x},\mb{y})$ has the interpretation as a
(exchangeable) copula for the joint law of two vectors $(X,Y),$ with identical
Dirichlet marginal distribution, arising as the\vadjust{\goodbreak} limit distribution of
colors in two P\'{o}lya urns with $ m$ random draws in common. Such a
joint law can be simulated via the following Gibbs sampling scheme:

\begin{longlist}
\item[(i)] \textit{Generate a vector $\mb{X}$ of $\operatorname{Dirichlet}(\alpha)$
random
frequencies on $d$ points.}

\item[(ii)] \textit{Conditional on the observed $\mb{X}=\mb{x}$, sample $ m$ i.i.d.
observations with common law $\mb{x}$.}

\item[(iii)] \textit{Given the vector $\mb{l}\in m\Delta_{(d-1)}$, which counts how
many observations in the sample at step \textup{(ii)} are equal to $1, \ldots
,d$, take $\mb{Y}$ as conditionally independent of $\mb{X}$ and
with distribution $D_{\alpha+\mb{l}}(\mathrm{d}\mb{y})$.}
\end{longlist}

The bivariate measure $\mathcal{BD}_{\alpha, m} $ and its
infinite-dimensional extension
has found several applications in Bayesian statistics (e.g., by \cite
{WSM05}), but no connections were made with orthogonal kernel and
canonical correlation sequences. A
recent important development of this direction is in~\cite{DKSC08}.

Now, let us allow the number $m$ of random draws in common to
be a random number $M$, say, assume that the probability that the two
P\'{o}lya urns have $M=m$ draws in common is $d_m,$ for any probability
distribution
$\{d_{ m}\dvt m=0,1,2,\ldots\}$ on $\mbb{N}.$
Then we obtain a new joint distribution, with identical
Dirichlet marginals and
copula given by
%
%
%e8.2 ###
\begin{equation} \label{xicop}\mathcal{BD}_{\alpha,d}(\mathrm{d}\mb{x},\mathrm{d}\mb
{y})=\mbb{E}[\mathcal{BD}_{\alpha, M}(\mathrm{d}\mb{x},\mathrm{d}\mb{y})]=\sum
_{ m=0}^{\infty}d_{ m}\xi^{\alpha}_{ m}(\mb{x},\mb{y})D_{\alpha
}(\mathrm{d}\mb
{x})D_{\alpha}(\mathrm{d}\mb{y}).
\end{equation}
The probabilistic
construction has just led us to prove the following:
\begin{proposition}
\label{pr:gspd} Let $\{d_{ m}\dvt m=0,1,\ldots\}$ be a probability measure
on $\{0,1,2,\ldots\}.$ For every $|\theta|\geq0,$ the sequence
%
%
%e8.3 ###
\begin{equation}
\rho_{ n}=\sum_{ m\geq n}\frac{ m_{[ n]}}{\ifact{(|\theta|+ m)}{ n}}d_{
m},  \qquad n=0,1,2,\ldots,\label{gspd}
\end{equation}
is $\alpha$-JPDS \textit{for every $d$ and every
$\alpha\in\mbb{R}^{d}$ such that $|\alpha|=|\theta|.$}
%Moreover,
%$\{\rho_{ n}\}$ is positive-definite for $\breve{D}_{|\alpha|}$
%and for the Poisson-Dirichlet $(|\alpha|)$ point process.
\end{proposition}

\begin{pf}
Note that
\[
\rho_{0}=\sum_{ m=0}^{\infty}d_{ m}=1
\]
is always true for every probability measure $\{d_{ m}\}.$

Now reconsider the form
(\ref{inverse}) for the (positive) function $\xi^{\alpha}_{ m}$: we can
rewrite (\ref{xicop}) as
%
%
%e8.7 ###
%e8.6 ###
%e8.5 ###
%e8.4 ###
\begin{eqnarray}\label{probqpd}
0&\leq&\sum_{ m=0}^{\infty}d_{ m}\xi^{\alpha}_{ m}(\mb{x},\mb{y})
\nonumber\\
&=&\sum_{ m=0}^{\infty}d_{ m}\sum_{ n\leq m}\frac{ m_{[ n]}}{\ifact
{(|\theta|+ m)}{ n}}Q_{ n}^{\alpha}(\mb{x},\mb{y})
\nonumber
\\[-8pt]
\\[-8pt]
\nonumber
&=&\sum_{ n=0}^{\infty}\biggl[\sum_{ m\geq n}\frac{ m_{[ n]}}
{\ifact{(|\theta|+ m)}{ n}}d_{ m}\biggr]Q^{\alpha}_{ m}(\mb{x},\mb
{y}) \\
&=&\sum_{ n=0}^{\infty}\rho_{ n}Q^{\alpha}_{ m}(\mb{x},\mb
{y}),\nonumber
\end{eqnarray}
and since (\ref{probqpd}) does not depend on the dimension of
$\alpha,$ then the proposition is proved for $D_{\alpha}.$
\end{pf}

\begin{example}
Take $d_{ m}=\delta_{ m l}$, the probability assigning full mass to $
l.$ The corresponding positive-definite sequence is
%
%
%e8.8 ###
\begin{equation}\rho_{ n}(m)=\sum_{ m\geq n}\frac{ m_{[ n]}}{\ifact
{(|\theta|+ m)}{ n}}\delta_{ m l}=\frac{ l_{[ n]}}{\ifact{(|\theta|+
l)}{ n}}\mbb{I}( l\geq n).
\label{diracpd}
\end{equation}
and by Proposition~\ref{pr:qn},
%
%
%e8.9 ###
\begin{equation}
\sum_{ n=0}^{\infty}\rho_{ n}(m)Q_{ n}^{\alpha}(\mb{x},\mb{y})=
\sum_{
n=0}^{ l}\frac{ l_{[ n]}}{\ifact{(|\theta|+ l)}{ n}}Q_{ n}^{\alpha
}(\mb
{x},\mb{y})=\xi^{\alpha}_{ l}(\mb{x},\mb{y})\geq0. \label{xidirac}
\end{equation}
Thus $\rho_n(m)$ forms the sequences of canonical correlations induced
by the bivariate probability distribution $\xi^{\alpha}_{ l}(\mb
{x},\mb
{y})D_\alpha(\mb{x})D_\alpha(\mb{y}).$
\end{example}

\begin{example}\label{ex:gen}
Consider, for every $t\geq0,$ the probability distribution
%
%
%e8.10 ###
\begin{equation}
d_{ m}(t)=\sum_{ n\geq n}a^{|\alpha|}_{ m n}\mathrm{e}^{-({1}/{2}) n(
n+|\alpha|-1)t},\qquad   m=0,1,2\ldots,
\label{genex}
\end{equation}
where $(a^{|\alpha|}_{ m n})$ is the invertible triangular system
(\ref
{anm}) defining the polynomial kernels $Q_{ n}^{\alpha}$ in Proposition
\ref{pr:qn}.
Since the coefficients of the inverse system are exactly of the form
\[
\frac{ m_{[ n]}}{\ifact{(|\theta|+ m)}{ n}}, \qquad  m, n=0,1,2,\ldots,
\]
then
\[
\rho_{ n}(t)=\mathrm{e}^{-({1}/{2}) n( n+|\alpha|-1)t}
\]
is, for every $t$, a positive-definite sequence. In particular, it is
the one characterizing the neutral Wright--Fisher diffusion in population
genetics, mentioned in Section~\ref{sec1.1}, whose generator has eigenvalues
$-\frac{1}{2} n( n+|\alpha|-1)$ and orthogonal polynomial
eigenfunctions.

The distribution (\ref{genex}) is the so-called \textit{coalescent
lineage distribution} (see~\cite{G81,G06}), that is, the
probability distribution of the number of lineages surviving up to
time $t$ back in the past, when the total mutation rate is $|\alpha|$, and
the allele frequencies of $d$ phenotypes in the whole population are
governed by $A_{|\alpha|,d}.$ More details on the connection between
coalescent lineage distributions and Jacobi polynomials can be found in
\cite{GS10}.
\end{example}
\begin{example}[(Perfect independence and dependence)]\label{ex:pf}
Extreme cases of perfect dependence or perfect independence can be
obtained from Example~\ref{ex:gen}, when we take the limit as
$t\rightarrow0$ or $t\rightarrow\infty$, respectively. In the former
case, $d_{ m}(0)=\delta_{ m\infty}$ so that $\rho_{ n}(0)=1$ for every
$ n$. The corresponding bivariate distribution is such that
\[
\mbb{E}_0\bigl(Q_{\mb{n}}(\mb{Y})|\mb{X}=\mb{x}\bigr)=Q_{\mb{n}}(\mb{x})
\]
so that, for every square-integrable function
\[
f=\sum_{\mb{n}}c_{\mb{n}}Q_{\mb{n}},
\]
we have
\[
\mbb{E}_0\bigl(f(\mb{Y})|\mb{X}=\mb{x}\bigr)=\sum_{\mb{n}}c_{\mb{n}}Q_{\mb
{n}}(\mb
{x})=f(\mb{x});
\]
that is, $\mathcal{BD}_{\alpha,\{0\}}$ is, in fact, the Dirac measure
$\delta(\mb{y}-\mb{x}).$

In the latter case, $d_{ m}(\infty)=\delta_{ m0}$ so that $\rho_{
n}(\infty)=0$ for every $ n>1$ and $\mbb{E}_0(Q_n(\mb{Y})|\mb
{X}=\mb
{x})=\mbb{E}[Q_n(\mb{Y})],$
implying that
\[
\mbb{E}_\infty\bigl(f(\mb{Y})|\mb{X}=\mb{x}\bigr)=\mbb{E}[f(\mb{Y})],
\]
that is, $\mb{X},\mb{Y}$ are stochastically independent.
\end{example}

%s8.1 ###
\subsection{The infinite-dimensional case}
Proposition~\ref{pr:gspd} also extends to Poisson--Dirichlet measures.
The argument and construction are the same, once one replaces $\xi_{
m}^{\alpha}$ with $\xi_{ m}^{\downarrow|\theta|,\infty}.$ We only need
to observe that because
the functions
\[
\pmatrix{{ m}\cr{\mb{l}}}\sharp(\mb{l})[\mb{x},\mb{l}],
\]
forming the terms in $\xi_{ m}^{\downarrow|\theta|,\infty}$ (see
(\ref
{symxi})), are probability measures on $ m\Delta_{\infty}^{\downarrow
},$ then the kernel
\[
\xi_{ m}^{\downarrow|\theta|,\infty}(\mb{x},\mb{y})D^{\downarrow
}_{|\theta|,\infty}(\mathrm{d}\mb{y})
\]
defines, for every $\mb{x},$ a proper transition probability function
on $\Delta^{\downarrow}_{\infty},$ allowing for the Gibbs sampling
interpretation as in Section~\ref{sec:prob}, but are modified as follows:

\begin{longlist}
\item[(i)] \textit{Generate a point $\mb{X}$ in $\Delta
^{\downarrow
}_{\infty}$ with distribution $\operatorname{PD}(|\theta|)$.}

\item[(ii)] \textit{Conditional on the observed $\mb{X}=\mb{x}$, sample a partition of
$ m$ with distribution function ${{ m}\choose{\mb{l}}}\sharp(\mb
{l})[\mb{x},\mb{l}]$.}

\item[(iii)] \textit{Conditionally on the vector $\mb{l}$, counting the cardinalities
of the blocks in the partition obtained at step \textup{(ii)}, take $Y$ as
stochastically independent of $X$ and
with distribution}
\[
\frac{{{ m}\choose{\mb{l}}}\sharp(\mb{l})[\mb{x},\mb
{l}]\operatorname{PD}_{\theta
}(\mathrm{d}\mb{y})}{\operatorname{ESF}_{|\theta|}(\mb{l})}.
\]
\end{longlist}

Thus the proof of the following statement is now obvious.
\begin{proposition}
Let $\{d_{ m}\dvt m=0,1,\ldots\}$ be a probability measure on $\{
0,1,2,\ldots\}.$ For every $|\theta|\geq0,$ the sequence
%
%
%e8.11 ###
\begin{equation}
\rho_{ n}=\sum_{ m\geq n}\frac{ m_{[ n]}}{\ifact{(|\theta|+ m)}{ n}}d_{
m}, \qquad n=0,1,2,\ldots,\label{gspdinfty}
\end{equation}
is a positive-definite sequence for the Poisson--Dirichlet point
process with parameter $|\theta|.$
\end{proposition}
%

%s9 ###
\section{From positive-definite sequences to probability
measures}\label
{sec:pd2prob}
In the previous section we have seen that it is possible to map
probability distributions on $\mbb{Z}_+$ to Jacobi positive-definite
sequences. It is natural to ask if, on the other way around, JPDSs $\{
\rho_{ n}\}$ can be mapped to probability distributions $\{d_{ m}\}$ on
$\mbb{Z}_+,$ for
every $m=0,1,\ldots,$ via the inversion\vspace*{-2pt}
%
%
%e9.1 ###
\begin{equation}d_{ m}(\rho)=\sum_{n=m}^{\infty}a^{|\alpha|}_{ n
m}\rho
_{ n}\label{dmrho}.\vspace*{-2pt}
\end{equation}
For this to happen we only need $d_m(\rho)$ to be non-negative for
every $m$ as it is easy to check that $\sum_md_m(\rho)=1$ always. In
this section we give some sufficient conditions on $\rho$ for $d_{
m}(\rho)$ to be non-negative for every $ m=0,1,\ldots,$ and an
important counterexample showing that not all JPDSs can be associated
to probabilities.
We restrict our attention to the Beta case ($d=2$) as we now know that,
if associated to a probability on $\mbb{Z}_+$, any JPDS for $d=2$ is
also JPDS for $d>2$.

Suppose $\rho=\{\rho_{ n}\}_{ n=0}^{\infty}$ satisfies\vspace*{-2pt}
%
%
%e9.2 ###
\begin{equation}
p_{\rho}(x,y):=\sum_{ n=0}^{\infty}\rho_{ n}Q_{ n}^{\alpha,\beta
}(x,y)\geq
0\label{bk}\vspace*{-2pt}
\end{equation}
and, in
particular,\vspace*{-2pt}
%
%
%e9.3 ###
\begin{equation}
p_{\rho}(x):=p_{\rho}(x,1)\geq0\label{pdro}.\vspace*{-2pt}
\end{equation}

\begin{proposition}
If all the derivatives of $p_{\rho}(x)$ exist, then $d_{ m}(\rho)\geq
0$ for every $ m\in\mbb{Z}_+$ if and only if all derivatives of
$p_{\rho}(x)$ are
non-negative.
\end{proposition}

\begin{pf}
Rewrite $d_{ m}(\rho)$ as\vspace*{-2pt}
%
%
%e9.5 ###
%e9.4 ###
\begin{eqnarray}\label{shift}
d_{ m}(\rho)&=&\sum_{ v=0}^{\infty}a_{ v+ m, m}^{|\theta|}\rho_{ v+
m}
\nonumber
\\[-8pt]
\\[-8pt]
\nonumber
&=&\frac{(|\theta|+ m)_{( m)}}{ m!}\sum_{ v=0}^{\infty}a_{
v0}^{|\theta
|+2 m}\rho_{ v+ m},\qquad  m=0,1,\ldots.\vspace*{-2pt}
\end{eqnarray}
This follows from the general identity\vspace*{-2pt}
%
%
%e9.6 ###
\begin{equation}
a^{|\theta|}_{ v+ j, u+ j}=a_{ v, u}^{|\theta|+2 j}\frac{ u!}{( u+
j)!}(|\theta|+ u+ j)_{( j)}.
\label{shiftavu}
\end{equation}
Now consider the expansion of Jacobi polynomials. We know that
%
%
%e9.8 ###
%e9.7 ###
\begin{eqnarray}\label{qxi}
\zeta_{ n}^{\alpha,\beta}R_{ n}^{\alpha,\beta}(x)R_{ n}^{\alpha
,\beta
}(y)&=&Q_{ n}^{\alpha,\beta}(x,y)
\nonumber
\\[-8pt]
\\[-8pt]
\nonumber
&=&\sum_{ m=0}^{n}a_{ n m}^{|\theta|}\xi_{ m}^{\alpha,\beta
}(x,y).
\end{eqnarray}
%
%where
%$$\xi_{ m}^{\alpha,\beta}(x,y)=
%[\frac{\ifact{\alpha}{i}\ifact{\beta}{ m-i}}{\ifact{|\theta|}{ m}}
%].$$
Since $R_{ n}^{\alpha,\beta}(1)=1$ and
$\xi_{ m}^{\alpha,\beta}(0,1)=\delta_{ m0}$, then
%
%
%e9.9 ###
\begin{equation}
\zeta_{ n}^{\alpha,\beta}R_{ n}^{\alpha,\beta}(0)=Q_{ n}^{\alpha
,\beta
}(0,1)=a_{ n0}^{|\theta|}\label{an0}.
\end{equation}
Therefore (\ref{shift}) becomes
%
%
%e9.10 ###
\begin{equation}
d_{ m}(\rho)=\frac{(|\theta|+ m)_{( m)}}{ m!}\sum_{ v=0}^{\infty
}\zeta
_v^{\alpha+ m,\beta+ m}R_{ v}^{\alpha+ m,\beta+ m}(0)\rho_{ v+
m},\qquad m=0,1,\ldots.
\label{shift2}
\end{equation}
Now apply, for example,~\cite{I05}, (4.3.2), to deduce
%
%
%e9.11 ###
\begin{eqnarray}\label{isid}
\frac{\mathrm{d}^{ m}}{\mathrm{d}y^{ m}}[D_{\alpha+ m,\beta+ m}(y)R_{ v}^{\alpha+
m,\beta+ m}(y)]=
(-1)^{ m}\frac{\theta_{(2 m)}}{\alpha_{( m)}}R_{ v+ m}^{\alpha
,\beta
}(y)D_{\alpha,\beta}(y).
\end{eqnarray}
For $ m=1$,
\begin{eqnarray*}\label{inpart}
\rho_{ v+1}&=&\int_0^1p_{\rho}(x)R_{ v+1}^{\alpha,\beta
}(x)D_{\alpha
,\beta}(x)\,\mathrm{d}x \\
&=&-\frac{\alpha}{|\theta|_{(2)}}\int_0^1p_{\rho}(x)\biggl[\frac
{\mathrm{d}}{\mathrm{d}x}R_{ v}^{\alpha+1,\beta+1}(x)D_{\alpha+1,\beta+1}(x)
\biggr]\,\mathrm{d}x \\
&=&\frac{\alpha}{|\theta|_{(2)}}\int_0^1\biggl(\frac{\mathrm{d}}{\mathrm{d}x}p_{\rho
}(x)\biggr)R_{ v}^{\alpha+1,\beta+1}(x)D_{\alpha+1,\beta+1}(x)\,\mathrm{d}x
.
\end{eqnarray*}
The last equality is obtained after integrating by parts. Similarly, denote
\[
p^{( m)}_{\rho}(x):=\frac{\mathrm{d}^{ m}}{\mathrm{d}x^{ m}}p_{\rho}(x),
\qquad m=0,1,\ldots.
\]
It is easy to prove
that
%
%
%e9.12 ###
\begin{equation}\rho_{ v+ m}=\frac{ m!\alpha_{( m)}}{|\theta|_{(2 m)}}
\int_0^1p^{( m)}_{\rho}(x)R_{ v}^{\alpha+ m,\beta+ m}(x)D_{\alpha+
m,\beta+ m}(x)\,\mathrm{d}x
\label{diffid} ,\vspace*{-1pt}
\end{equation}
so we can write
\[
d_{ m}(\rho)=\frac{\alpha_{( m)}}{|\theta|_{( m)}}p^{( m)}_{\rho}(0).\vspace*{-1pt}
\]
Thus if $p^{( m)}_{\rho}\geq0,$ then $d_{ m}(\rho)$ is,
for every $ m$, non-negative and this proves the sufficiency.

For the necessity, assume, without loss of generality,
that $\{d_{ m}(\rho)\dvt m\in\mbb{Z}_+\}$ is a probability mass function
on $\mbb{Z}_+$. Then its probability generating function (\textit{p.g.f.})
must have
all derivatives non-negative. For every $0<\gamma<|\theta|,$ the p.g.f.
has the representation
%
%
%e9.16 ###
%e9.15 ###
%e9.14 ###
%e9.13 ###
\begin{eqnarray}\label{pgf}
\varphi(s)&=&\sum_{ m=0}^{\infty}d_{ m}(\rho)s^{ m} \nonumber\\[-2pt]
&=&\mbb{E}_{\gamma,|\theta|-\gamma}\Biggl[\sum_{ m=0}^{\infty}d_{
m}(\rho
)\xi_{ m}^{\gamma,|\theta|-\gamma}(sZ,1)\Biggr]
\nonumber
\\[-9pt]
\\[-9pt]
\nonumber
&=&\mbb{E}_{\gamma,|\theta|-\gamma}\Biggl[\sum_{ m=0}^{\infty}\rho_{ n}
\zeta_{ n}^{\gamma,|\theta|-\gamma}R_{ n}^{\gamma,|\theta|-\gamma
}(sZ)\Biggr] \\[-2pt]
&=&\mbb{E}_{\gamma,|\theta|-\gamma}[p_{\rho}(sZ)],\nonumber
\end{eqnarray}
where $Z$ is a $\operatorname{Beta}(\gamma,|\theta|-\gamma)$ random variable. Here
the second equality follows from the identity
%
%
%e9.17 ###
\begin{equation}
\frac{|\theta|_{( m)}}{\alpha_{( m)}}x^{ m}=\xi_{ m}^{\alpha,\beta
}(x,1), \qquad \alpha,\beta>0 \label{ximom},\vspace*{-1pt}
\end{equation}
and the third equality
comes from (\ref{qxi}).

So, for every $k=0,1,\ldots,$
%
%
%e9.18 ###
\begin{equation}
0\leq
\frac{\mathrm{d}^{ k}}{\mathrm{d}s^{ k}}\varphi(s)=\mbb{E}_{\gamma,|\theta|-\gamma}
\bigl[Z^{ k}
p^{( k)}_{\rho}(sZ)\bigr]\label{pgfder}\vspace*{-1pt}
\end{equation}
for every
$\gamma\in(0,|\theta|).$ Now, if we take the limit as
$\gamma\rightarrow|\theta|$, $Z\mathop{\rightarrow}^{d}1$ so, by
continuity,
\[
\mbb{E}_{\gamma,|\theta|-\gamma}\bigl[Z^{ k}
p^{( k)}_{\rho}(sZ)\bigr]\mathop{\rightarrow}_{\gamma\rightarrow|\theta|}
p^{({ k})}(s),\vspace*{-1pt}
\]
preserving the positivity, which completes the proof.\vspace*{-2pt}
\end{pf}
%
%s9.1 ###
\subsection{A counterexample}\vspace*{-2pt}
In Gasper's representation (Theorem~\ref{th:2}), every
positive-definite sequence is a mixture of Jacobi polynomials,
normalized with respect to their value at 1. It is natural to ask
whether these extreme\vadjust{\goodbreak} points lend themselves to probability
measures on $\mbb{Z}_+.$ A positive answer would imply that all
positive-definite sequences, under Gasper's conditions, are
coupled with probabilities on the integers. Rather surprisingly,
the answer is negative.

\begin{proposition}\label{prob:2}Let $\alpha,\beta>0$ satisfy Gasper's
conditions. The function
\[
d_{ m}=\sum_{ n\geq|m|}a^{|\theta|}_{ n m}R_{ n}^{\alpha,\beta
}(x), \qquad m=0,1,2,\ldots,
\]
is not a probability
measure.
\end{proposition}
\begin{pf}
Rewrite
%
%
%e9.20 ###
%e9.19 ###
\begin{eqnarray}\label{gf2} \phi_x(s)&=&\sum_{n=0}^{\infty}R_{n}^{\alpha,\beta
}(x)\sum_{ m=0}^{ n}a^{|\theta|}_{ n m} s^{ m}
\nonumber
\\[-8pt]
\\[-8pt]
\nonumber
&=&\mbb{E}\sum_{ n=0}^{\infty}\zeta_{ n}^{\alpha,\beta}R_{
n}^{\alpha
,\beta}(x)R_{ n}^{\alpha,\beta}(Ws),
\end{eqnarray}
where $W$ is a $\operatorname{Beta} (\alpha,\beta)$ random variable.
This also shows that, for every $x,$
\[
\frac{\mathrm{d}D_{\alpha,\beta}(y)}{\mathrm{d}y}\sum_{n=0}^{\infty}\zeta_{
n}^{\alpha
,\beta}R_{ n}^{\alpha,\beta}(x)R_{ n}^{\alpha,\beta}(y)=\delta_x(y),
\]
that is, the Dirac measure putting all its unit mass on $x$ (see also
Example~\ref{ex:pf}).

Now, if $\phi_x(s)$ is a probability generating function, then,
for every positive $L_2$ function $g,$
any mixture of the form
%
%
%e9.22 ###
%e9.21 ###
\begin{eqnarray}\label{mixpgf}
q(s)&=&\int_0^1g(x)\phi_x(s)\frac{x^{\alpha-1}(1-x)^{\beta
-1}}{B(\alpha
,\beta)}\,\mathrm{d}x
\nonumber
\\[-8pt]
\\[-8pt]
\nonumber
&=&\int_0^1g(ws)\frac{w^{\alpha-1}(1-w)^{\beta-1}}{B(\alpha,\beta
)}\,\mathrm{d}w
\end{eqnarray}
must be a probability generating function; that is, it must have all
derivatives positive. However, if we choose $g(x)=\mathrm{e}^{-\lambda x},$
then we know that, $g$ being completely monotone, the derivatives
of $q$ will have alternating sign, which proves the
claim.
\end{pf}
%
%s10 ###
\section{Positive-definite sequences in the
Dirichlet multinomial distribution}\label{sec:hpds}
In this section we aim to investigate the relationship
existing between JPDS and HPDS. In particular, we wish to understand
when (P2) is true, that is, when a sequence is both HPDS and JPDS for a
given $\alpha$. It turns out that, by using the results in Sections
\ref
{sec:k2} and~\ref{sec:irhpk}, it is possible to define several
(sometimes striking) mappings from JPDS and HPDS and vice versa, but we
could prove~(P2) only for particular subclasses of positive-definite
sequences. In Proposition~\ref{bernpds} we prove that every JPDS is a
limit of~(P2) sequences. Later, in Proposition~\ref{prp:p2}, we will
identify another~(P2) family of positive-definite sequences, as a
proper subfamily of the JPDSs, derived in Section~\ref{sec:prob} as the
image, under a specific bijection, of a probability on $\mbb{Z}_+$.

The first proposition holds with no constraints on $\alpha$ or $d.$
\begin{proposition}\label{pr:j2h1}
For every $d$ and $\alpha\in\mbb{R}^d_+$, let $\rho=\{\rho_{ n}\}$ be
a $\alpha$-JPDS. Then
%
%
%e10.1 ###
\begin{equation}\rho_{ n}\frac{N_{[ n]}}{\ifact{(|\alpha|+N)}{
n}},\qquad n=0,1,2,\ldots,\label{hpd}
\end{equation}
is a positive-definite sequence for $\operatorname{DM}_{\alpha,N}, $ for
every $N=1,2,\ldots.$

\end{proposition}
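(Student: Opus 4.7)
The plan is very short: combine the mixture representation of the Hahn kernels from Proposition~\ref{pr:KDM} with the definition of an $\alpha$-JPDS. The normalizing constants are designed precisely so that this works.

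First I would write out the candidate Hahn expansion
\begin{equation*}
p^{H}(r,s) \;=\; \sum_{|n|=0}^{N} \rho_{|n|}\,\frac{N_{[|n|]}}{(|\alpha|+N)_{(|n|)}}\, H^{\alpha}_{|n|}(r,s),\qquad r,s\in N\Delta_{(d-1)}.
\end{equation*}
Note that the upper limit $N$ is automatic, since $N_{[|n|]}=0$ for $|n|>N$, so there are no convergence issues to worry about.

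Next I would apply Proposition~\ref{pr:KDM}, which gives
\begin{equation*}
H^{\alpha}_{|n|}(r,s) \;=\; \frac{(|\alpha|+N)_{(|n|)}}{N_{[|n|]}} \int\!\!\int Q^{\alpha}_{|n|}(x,y)\, D_{\alpha+r}(dx)\,D_{\alpha+s}(dy).
\end{equation*}
The prefactor $N_{[|n|]}/(|\alpha|+N)_{(|n|)}$ in the expansion was chosen exactly to cancel the normalizing constant appearing here. Substituting and swapping the (finite) sum with the integrals yields
\begin{equation*}
p^{H}(r,s) \;=\; \int\!\!\int \Big(\sum_{|n|=0}^{N}\rho_{|n|} Q^{\alpha}_{|n|}(x,y)\Big)\, D_{\alpha+r}(dx)\,D_{\alpha+s}(dy).
\end{equation*}

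Finally, since $\rho$ is by assumption an $\alpha$-JPDS, the bracketed integrand is the partial sum of a non-negative Jacobi expansion. In fact the Jacobi series $\sum_{|n|\ge 0}\rho_{|n|} Q^\alpha_{|n|}(x,y)$ is non-negative, and truncating it to $|n|\le N$ and integrating against $D_{\alpha+r}\otimes D_{\alpha+s}$ is the same as projecting onto the subspace of polynomials of total degree at most $N$ via the transform \eqref{transform}; equivalently, one may first note that the truncation is irrelevant because $D_{\alpha+r}\otimes D_{\alpha+s}$-integrals of $Q^\alpha_{|n|}$ for $|n|>N$ produce Hahn polynomials that vanish identically on $N\Delta_{(d-1)}$. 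Either way, the right-hand side is non-negative, so $p^H(r,s)\ge 0$, completing the proof. There is no real obstacle; the content of the statement is that the constants $N_{[|n|]}/(|\alpha|+N)_{(|n|)}$ are precisely those needed to make the Jacobi-to-Hahn transfer exact.
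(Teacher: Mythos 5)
Your proof is correct and is essentially the paper's own argument run in reverse: the paper integrates the non-negative Jacobi expansion against $D_{\alpha+r}\otimes D_{\alpha+s}$ and invokes Proposition \ref{pr:KDM} to recognize the result as the Hahn expansion with the stated coefficients, which is exactly your substitution step read in the other direction. Your extra remark that terms with $|n|>N$ integrate to zero (since the orthogonality constant $N_{[|n|]}/(|\alpha|+N)_{(|n|)}$ vanishes there) is a correct and slightly more careful handling of the truncation that the paper leaves implicit.
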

\begin{pf}
From Proposition~\ref{pr:KDM}, if
\[
\sum_{ n=0}^{\infty}\rho_{ n}Q_{ n}^{\alpha}(\mb{x},\mb{y})\geq0,
\]
then for every $\mb{r},\mb{s}\in\mbb{N}^{d}\dvt |\mb{r}|=|\mb{s}|=N$,
\[
\sum_{ n=0}^{\infty}\rho_{ n}\int\int
Q_{ n}^{\alpha}(\mb{x},\mb{y})D_{\alpha+\mb{r}}(\mathrm{d}\mb{x})D_{\alpha
+\mb
{s}}(\mathrm{d}\mb{y})=\sum_{ n=0}^{\infty}\rho_{ n}\frac{N_{[ n]}}{\ifact
{(|\alpha|+N)}{ n}}H^{\alpha}_{ n}(\mb{r},\mb{s})\geq
0.
\]
\upqed\end{pf}
\begin{example}\label{ex:hgen}Consider the JPDS given in Example \ref
{ex:gen} from population genetics; $\rho_n(t)=\mathrm{e}^{-({1}/{2})tn(n+|\alpha|-1)}, t\geq0.$ The HPDS
%
%
%e10.2 ###
\begin{equation}
\rho_n(t|N)=\frac{N_{[ n]}}{\ifact{(|\alpha|+N)}{ n}}\mathrm{e}^{-
({1}/{2})tn(n+|\alpha|-1)}\label{cldh}
\end{equation}
describes the survival function of the number of non-mutant surviving
lineages at time $t$ in the past, in a coalescent process with neutral
mutation, starting with $N$ surviving lineages at time 0 (see \cite
{G06} for more details and references).
\end{example}

Two important HPDSs are given in the following lemma.
\begin{lemma}
\label{l:hpd}
For every $d,$ every $ m\leq N$ and every $\alpha\in\mbb{R}^d_+,$
both sequences
%
%
%e10.3 ###
\begin{equation}
\biggl\{\frac{ m_{[ n]}}{\ifact{(|\alpha|+ m)}{ n}}\frac{\ifact{(|\alpha
|+N)}{ n}}{N_{[ n]}}\biggr\}_{ n\in\mbb{Z}_+}
\label{2fracpd}
\end{equation}
and
%
%
%e10.4 ###
\begin{equation}
\biggl\{\frac{ m_{[ n]}}{\ifact{(|\alpha|+ m)}{ n}}\biggr\}_{ n\in\mbb{Z}_+}
\label{1fracpd}
\end{equation}
are $\alpha$-HPDSs for $\operatorname{DM}_{\alpha,N}$.\vadjust{\goodbreak}
\end{lemma}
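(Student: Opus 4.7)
The unifying strategy is to exhibit each sequence as the coefficient sequence, with respect to the Hahn polynomial kernels $H^\alpha_{|n|}(r,s)$, in the expansion of an explicitly non-negative bivariate function on $N\Delta_{(d-1)} \times N\Delta_{(d-1)}$. Since $H^\alpha_{|n|}$ vanishes for $|n|>N$, both ``infinite'' sums are in fact finite and there is no convergence issue.

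For the first sequence (\ref{2fracpd}) I would appeal to the identity already obtained in the proof of Proposition \ref{vdmk},
$$
\chi^{H,\alpha}_{|m|}(r,s) = \sum_{|n|=0}^{|m|}\frac{c^{|\alpha|}_{|m||n|}}{c^{|\alpha|}_{N|n|}}\, H^\alpha_{|n|}(r,s),
$$
with $c^{|\alpha|}_{|i||j|} = |i|_{[|j|]}/(|\alpha|+|i|)_{(|j|)}$. Writing out the ratio $c^{|\alpha|}_{|m||n|}/c^{|\alpha|}_{N|n|}$ reproduces exactly the sequence in (\ref{2fracpd}), and $\rho_0=1$ is transparent. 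Non-negativity of $\chi^{H,\alpha}_{|m|}(r,s)$ is immediate from the defining formula (\ref{hatxi2}): each summand is the product of the positive factor $1/DM_\alpha(l;|m|)$ with two multivariate hypergeometric weights $\binom{|m|}{l}p_l(r)/N_{[|m|]}$ and $\binom{|m|}{l}p_l(s)/N_{[|m|]}$, both non-negative whenever $|l|=|m|\leq N$.

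For the second sequence (\ref{1fracpd}) I would run a parallel argument using $\xi^{H,\alpha}_{|m|}$ in place of $\chi^{H,\alpha}_{|m|}$. Non-negativity of $\xi^{H,\alpha}_{|m|}(r,s)$ is visible from the second expression in Proposition \ref{pr:KDM} (see the line labelled \ref{hahn xi2}), as a sum of ratios of non-negative Dirichlet-multinomial probabilities. Its Hahn-kernel expansion is obtained by integrating the inverse identity (\ref{inverse}),
$$
\xi^\alpha_{|m|}(x,y) = 1 + \sum_{|n|=1}^{|m|}\frac{|m|_{[|n|]}}{(|\alpha|+|m|)_{(|n|)}}\, Q^\alpha_{|n|}(x,y),
$$
against the product posterior measure $D_{\alpha+r}(dx)\,D_{\alpha+s}(dy)$ and then using Proposition \ref{pr:KDM} to replace $\mbb{E}[Q^\alpha_{|n|}(X,Y)\mid r,s]$ by the appropriate multiple of $H^\alpha_{|n|}(r,s)$. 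This exhibits the sequence $\{|m|_{[|n|]}/(|\alpha|+|m|)_{(|n|)}\}$ (up to the conversion factor from Jacobi to Hahn kernels) as an HPDS.

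The main delicacy is purely bookkeeping: one has to keep careful track of the two pairs of falling factorials $|m|_{[|n|]},N_{[|n|]}$ and Pochhammer symbols $(|\alpha|+|m|)_{(|n|)},(|\alpha|+N)_{(|n|)}$, and check that the coefficients emerging from the two non-negative functions $\chi^{H,\alpha}_{|m|}$ and $\xi^{H,\alpha}_{|m|}$ match (\ref{2fracpd}) and (\ref{1fracpd}) exactly. No new conceptual ingredient is needed beyond the identities already proved in Propositions \ref{pr:qn}, \ref{pr:KDM} and \ref{vdmk}; in particular both HPDS claims are instances of the same principle that a positive bilinear form on $N\Delta_{(d-1)}$ admits a Hahn-kernel expansion with a non-negative coefficient sequence, which is by definition an $\alpha$-HPDS.
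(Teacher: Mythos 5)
Your treatment of the first sequence (\ref{2fracpd}) is exactly the paper's: invert the expansion of Proposition \ref{vdmk} to write the manifestly non-negative $\chi^{H,\alpha}_{|m|}$ as $\sum_{|n|\le|m|}\bigl(c^{|\alpha|}_{|m||n|}/c^{|\alpha|}_{N|n|}\bigr)H^{\alpha}_{|n|}$ and read off the coefficients. That part is correct.

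The second part has a genuine gap, and it sits precisely in your phrase \lq\lq up to the conversion factor from Jacobi to Hahn kernels.\rq\rq\ Integrating (\ref{inverse}) against $D_{\alpha+r}\otimes D_{\alpha+s}$ and substituting $\mbb{E}\left[Q^{\alpha}_{|n|}(X,Y)\mid r,s\right]=\frac{N_{[|n|]}}{\ifact{(|\alpha|+N)}{|n|}}H^{\alpha}_{|n|}(r,s)$ from (\ref{KDM}) yields
\begin{equation*}
\xi^{H,\alpha}_{|m|}(r,s)=\sum_{|n|=0}^{|m|}\frac{|m|_{[|n|]}}{\ifact{(|\alpha|+|m|)}{|n|}}\,\frac{N_{[|n|]}}{\ifact{(|\alpha|+N)}{|n|}}\,H^{\alpha}_{|n|}(r,s),
\end{equation*}
so what your argument actually proves is that $\left\{\frac{|m|_{[|n|]}}{\ifact{(|\alpha|+|m|)}{|n|}}\frac{N_{[|n|]}}{\ifact{(|\alpha|+N)}{|n|}}\right\}$ is an HPDS --- a third sequence, equal to neither (\ref{2fracpd}) (which carries the reciprocal factor) nor (\ref{1fracpd}) (which carries no factor at all). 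Being an HPDS is not preserved under dividing out the conversion factor, so it cannot simply be discarded; an additional step is required. The paper closes exactly this gap by using the fact that the product of two HPDSs is an HPDS: starting from any JPDS $\wt{\rho}$, Proposition \ref{pr:j2h1} makes $\wt{\rho}_{|n|}N_{[|n|]}/\ifact{(|\alpha|+N)}{|n|}$ an HPDS, multiplying it by (\ref{2fracpd}) cancels the conversion factor and leaves $\wt{\rho}_{|n|}|m|_{[|n|]}/\ifact{(|\alpha|+|m|)}{|n|}$, and finally $\wt{\rho}\equiv 1$ is reached as a pointwise limit of JPDSs (Examples \ref{ex:gen} and \ref{ex:pf}), which is harmless because the Hahn expansion is a finite sum. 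You need some version of this multiplication-plus-limit device to pass from the sequence your computation delivers to (\ref{1fracpd}).
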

\begin{pf}
From Proposition~\ref{vdmk}, by inverting (\ref{n22}) we know that, for
$ m=0,\ldots,N$
\[
0\leq\chi_{ m}^{H,\alpha}=\sum_{ n=0}^{ m}\frac{ m_{[ n]}}{\ifact
{(|\alpha|+m)}{ n}}\frac{\ifact{(|\alpha|+N)}{ n}}{ N_{[ n]}} H_{
n}^{\alpha},
\]
so
\[
\biggl\{\frac{ m_{[ n]}}{\ifact{(|\alpha|+ m)}{ n}}\frac{\ifact{(|\alpha
|+N)}{ n}}{ N_{[ n]}}\biggr\}
\]
is a HPDS.

Now let $\wt{\rho}_{ n}$ be a JPDS. By Proposition~\ref{pr:j2h1},
the sequence
\[
\biggl\{\wt{\rho}_{ n}\frac{ N_{[ n]}}{\ifact{(|\alpha|+N)}{ n}}\biggr\}
\]
is $\alpha$-HPDS. By multiplication,
\[
\biggl\{\wt{\rho}_{ n}\frac{N_{[ n]}}{\ifact{(|\alpha|+N)}{ n}}\frac{
m_{[ n]}}{\ifact{(|\alpha|+ m)}{ n}}\frac{\ifact{(|\alpha|+N)}{
n}}{N_{[ n]}}\biggr\}=
\biggl\{\wt{\rho}_{ n}\frac{ m_{[ n]}}{\ifact{(|\alpha|+ m)}{ n}}
\biggr\}
\]
is HPDS as well.
This also implies that
\[
\biggl\{\frac{ m_{[ n]}}{\ifact{(|\alpha|+ m)}{ n}}\biggr\}
\]
is HPDS (to convince oneself, take $(\wt{\rho}_n)$ as in Example \ref
{ex:gen} or in Example~\ref{ex:pf}, and take the limit as
$t\rightarrow
0$ or $z\rightarrow1$, respectively).
\end{pf}

We are now ready for our first result on (P2)-sequences.
\begin{proposition}
\label{bernpds}
For every $d$ and $\alpha\in\mbb{R}^d_+$, let $\rho=\{\rho_{ n}\}$ be
a $\alpha$-JPDS. Then there exists a sequence $\{\rho^{N}_{ n}\dvt n\in
\mbb
{Z}_+\}_{N=0}^{\infty}$, such that:
\begin{enumerate}[(ii)]
\item[(i)]for every $ n,$
\[
\rho_{ n}=\lim_{N\rightarrow\infty}\rho_{ n}^{N};
\]
\item[(ii)]
for every $N,$ the sequence $\{\rho_{ n}^{N}\}$ is both HPDS and JPDS.
%$$\{\rho_{ n}^{N}\frac{\ifact{(\alpha+N)}{ n}}{N_{[n]}}
\end{enumerate}
\end{proposition}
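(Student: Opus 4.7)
The plan is to take the truncation
\[
\rho^N_{|n|} := \rho_{|n|}\cdot\frac{N_{[|n|]}}{(|\alpha|+N)_{(|n|)}},\qquad |n|\in\mbb{Z}_+,
\]
which is supported on $\{|n|\leq N\}$ (since $N_{[|n|]}=0$ for $|n|>N$) and, for every fixed $|n|$, converges to $\rho_{|n|}$ as $N\to\infty$. This settles (i).

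For the HPDS half of (ii), I simply invoke Proposition \ref{pr:j2h1}: the factor $N_{[|n|]}/(|\alpha|+N)_{(|n|)}$ is precisely the one that turns an $\alpha$-JPDS into an $\alpha$-HPDS for $DM_{\alpha}(\,\cdot\,;N)$, so $\rho^N$ is $\alpha$-HPDS for that $N$ by construction.

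The JPDS half is the substantive step. I first observe that the multiplier sequence $\{N_{[|n|]}/(|\alpha|+N)_{(|n|)}\}$ is itself an $\alpha$-JPDS: reading the inverse formula (\ref{inverse}) as
\[
\xi_N^{\alpha}(x,y)=\sum_{|n|=0}^{N}\frac{N_{[|n|]}}{(|\alpha|+N)_{(|n|)}}\,Q_{|n|}^{\alpha}(x,y),
\]
the left-hand side is manifestly nonnegative by (\ref{xin}), being a nonnegative combination of the monomials $\prod_i(x_iy_i)^{l_i}$. I then realize $\rho^N$ as the symbol of the composition of the two canonical-correlation operators, with symbols $\rho$ and $\{N_{[|n|]}/(|\alpha|+N)_{(|n|)}\}$ respectively. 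Concretely, let $p_\rho(x,y)$ denote the (nonnegative) density of the joint distribution associated with the JPDS $\rho$, so that $T_\rho f(x)=\int f(y)\,p_\rho(x,y)\,D_\alpha(dy)$. Using the spectral identity $T_\rho Q_{|m|}^{\alpha}(\cdot,y)(x)=\rho_{|m|}Q_{|m|}^{\alpha}(x,y)$ termwise in the finite sum defining $\xi_N^{\alpha}$, I obtain
\[
\sum_{|n|=0}^{N}\rho^N_{|n|}\,Q_{|n|}^{\alpha}(x,y)=\int_{\Delta_{(d-1)}}p_\rho(x,z)\,\xi_N^{\alpha}(z,y)\,D_{\alpha}(dz)\;\geq\;0,
\]
which is the required JPDS inequality.

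The only delicate point is that $p_\rho(x,\cdot)$ is a priori only a nonnegative $L^1$-density whose $Q_{|n|}^{\alpha}$-expansion may converge only in a distributional sense; but because $\xi_N^{\alpha}(z,y)$ is a finite linear combination of $Q_{|m|}^{\alpha}(\cdot,y)$ with $|m|\leq N$, the sum-integral interchange above involves only finitely many terms, each directly justified by the spectral property of $T_\rho$ recalled in the Introduction. This is really the only thing that could conceivably go wrong, and it does not.
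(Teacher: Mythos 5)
Your proof is correct, but it follows a genuinely different route from the paper's. The paper constructs its approximants by Bernstein discretization: it sets $\rho^N_{|n|}=\mbb{E}\left[Q^{\alpha}_{|n|}(X,Y)B_{N,N}p_\rho(X,Y)\right]$, where $B_{N,N}$ is the two-variable Bernstein (binomial-sampling) operator; positivity of $B_{N,N}p_\rho$ yields the JPDS property, the identity $\rho^N_{|n|}=\frac{N_{[|n|]}}{(|\alpha|+N)_{(|n|)}}\mbb{E}\left[p_{\rho}(I/N,J/N)H^{\alpha}_{|n|}(I,J)\right]$ combined with Lemma \ref{l:hpd} yields the HPDS property, and $B_{N,N}p_\rho\to p_\rho$ yields (i). Since the Bernstein operator is not diagonalized by the Jacobi polynomials, the paper's $\rho^N$ is \emph{not} of your product form $\rho_{|n|}N_{[|n|]}/(|\alpha|+N)_{(|n|)}$; but the proposition only asserts existence of some approximating sequence, so this is immaterial. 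All three of your ingredients are in fact already available in the paper: the HPDS half is literally Proposition \ref{pr:j2h1}; the facts that $N_{[|n|]}/(|\alpha|+N)_{(|n|)}$ is itself $\alpha$-JPDS (via the inverse relation (\ref{inverse}) and the nonnegativity of $\xi^{\alpha}_{N}$) and that multiplying a JPDS by it preserves positive-definiteness are exactly the content of Remark \ref{rm2}; and the limit in (i) is elementary. What your route buys: it sidesteps the regularity tacitly used in the Bernstein argument (the convergence $B_{N,N}p_\rho\to p_\rho$ and the interchange of limit and expectation assume $p_\rho$ continuous and bounded), and since your multiplier vanishes for $|n|>N$ every sum is finite, so the only analytic input is the spectral identity $T_\rho Q_{|n|}(\cdot,y)=\rho_{|n|}Q_{|n|}(\cdot,y)$, which, as you correctly note, relies on the standing assumption $\rho_n=\rho_{|n|}$. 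What the paper's route buys is the probabilistic picture: its $\rho^N$ arises as the Hahn coefficient sequence of the discretized nonnegative kernel $p_\rho(i/N,j/N)$, exhibiting the approximating HPDS as coming from an explicit nonnegative function on the discrete simplex.
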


\begin{pf}
We show the proof for $d=2.$ For $d>2$ the proof is essentially the
same, with all distributions obviously replaced by their multivariate
versions. Take $I,J$ two independent $\operatorname{DM}_{(\alpha,\beta),N}$ and
$\operatorname{DM}_{(\alpha,\beta),M}$ random variables. As a result of de Finetti's
representation theorem, conditionally on the event $\{\lim
_{N\rightarrow
\infty}(\frac{I}{N}\frac{J}{M})=(x,y)\},$ the $(I,J)$ are independent
binomial r.v.s with parameter $(N,x)$ and $(M,y)$, respectively.

Let $f\dvtx[0,1]^2\rightarrow\mbb{R}$ be a positive continuous function.
The function
\[
B_{N,M}f(x,y):=\mbb{E}\biggl[f\biggl(\frac{I}{N},\frac{J}{M}\biggr) \big| x,y\biggr], \qquad
N,M=0,1,\ldots,
\]
is positive, as well, and, as $N,M\rightarrow\infty,$
\[
B_{N,M}f(x,y){\longrightarrow}f(x,y).
\]
Now take
\[
p_{\rho}(x,y)=\sum_{ n}\rho_{ n}Q^{\alpha,\beta}_{ n}(x,y)\geq0
\]
for every $x,y\in[0,1].$
Then, for $X,Y$ independent $D_{\alpha,\beta},$
\begin{eqnarray*}
\rho_{ n}&=&\mbb{E}[{Q^{\alpha,\beta}_{ n}(X,Y)}p_{\rho}(X,Y)
] \\
&=&\mbb{E}\Bigl[Q^{\alpha,\beta}_{ n}(X,Y)\lim_{N\rightarrow\infty
}B_{N,N}p_{\rho}(X,Y)\Bigr] \\
&=&\lim_{N\rightarrow\infty}\mbb{E}[Q^{\alpha,\beta}_{
n}(X,Y)B_{N,N}p_{\rho}(X,Y)] \\
&=&\lim_{N\rightarrow\infty}\rho_{ n}^{N},
\end{eqnarray*}
where
\[
\rho_{ n}^{N}:=\mbb{E}[Q^{\alpha,\beta}_{ n}(X,Y)B_{N,N}p_{\rho}
(X,Y)].
\]
But $B_{N,N}p_{\rho}$ is positive, so (i) is proved.

Now rewrite
%
%
%e10.7 ###
%e10.6 ###
%e10.5 ###
\begin{eqnarray}\label{prehpds}
\rho_{ n}^{N}&=&\int_0^1\int_0^1\sum_{i=1}^{N}\sum
_{j=1}^{N}p_{\rho
}\biggl(\frac{i}{N},\frac{j}{N}\biggr)Q^{\alpha,\beta}_{
n}(x,y)\pmatrix{N\cr i}x^i(1-x)^{N-i}\nonumber\\
&&\hspace*{61pt}{}\times \pmatrix{N\cr j}y^j(1-y)^{N-j}D_{\alpha
}(\mathrm{d}x)D_{\alpha}(\mathrm{d}y)
\nonumber
\\[-8pt]
\\[-8pt]
\nonumber
&=&\sum_{i=1}^{N}\sum_{j=1}^{N}\operatorname{DM}_{\alpha,N}(i)\operatorname{DM}_{\alpha
,N}(j)p_{\rho
}\biggl(\frac{i}{N},\frac{j}{N}\biggr)\mbb{E}[Q^{\alpha,\beta}_{
n}(X,Y) |  i,j] \\
&=&\frac{N_{[n]}}{\ifact{(\alpha+\beta+N)}{ n}}\mbb{E}\biggl[p_{\rho
}\biggl(\frac{I}{N},\frac{J}{N}\biggr)H_{ n}^{\alpha,\beta}(I,J)\biggr]
\nonumber
\end{eqnarray}
for $I,J$ are independent $\operatorname{DM}_{(\alpha,\beta),N}$ random variables. The
last equality follows from (\ref{KDM}). Since~$p_{\rho}$ is positive,
from (\ref{prehpds}), it follows that
\[
\biggl\{\rho_{ n}^{N}\frac{\ifact{(\alpha+\beta+N)}{ n}}{N_{[n]}}\biggr\}
\]
is, for every $N,$ $\alpha$-HPDS. But by Lemma~\ref{l:hpd}, we can
multiply every term of the sequence by the HPDS (\ref{1fracpd}), where
we set $ m=N,$ to obtain (ii).
\end{pf}

The next proposition shows some mappings from Hahn to Jacobi
PDSs. It is, in some sense, a~converse of Proposition~\ref{pr:j2h1}
under the usual (extended) Gasper constraints on~$\alpha$.

\begin{proposition}
\label{jhpd}
If $\alpha$ satisfies the conditions of Proposition~\ref{PROP:1}, let
$\{\rho_{ n}\}$ be $\alpha$-HPDS for some integer $N$. Then both $\{
\rho
_{ n}\}$ and (\ref{hpd}) are positive definite for $D_{\alpha}$.
\end{proposition}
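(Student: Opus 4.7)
The plan is to establish a single conditional-expectation identity linking the Hahn and Jacobi kernels via a Dirichlet/Multinomial conjugate pair, and then extract Proposition \ref{jhpd} as a direct consequence. Let $X,Y \sim D_{\alpha}$ be independent, and conditionally on them draw $R \mid X \sim \mathrm{Mult}(N,X)$ and $S \mid Y \sim \mathrm{Mult}(N,Y)$, so that marginally $R,S \sim DM_{\alpha}(\cdot;N)$ and, by conjugacy, $X \mid R \sim D_{\alpha+R}$, $Y \mid S \sim D_{\alpha+S}$. Writing $\sigma_{|n|} := N_{[|n|]}/(|\alpha|+N)_{(|n|)}$, the key identity I want is
$$\mbb{E}\bigl[H^{\alpha}_{|n|}(R,S) \,\big|\, X=x,\, Y=y\bigr] \;=\; \sigma_{|n|}\, Q^{\alpha}_{|n|}(x,y).$$

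To obtain this I would start from the posterior-mixture representation (\ref{KDM}) for $H^{\alpha}_{|n|}$ and take conditional expectation given $(X,Y)$. The inner expectation $\mbb{E}[D_{\alpha+R}(dx')\mid X=x]$ is the transition kernel of the exchangeable chain $x \mapsto x'$ associated with the Dirichlet/Multinomial pair, and a short calculation based on the expansion (\ref{inverse}) of $\xi^{\alpha}_N(x,x')$ together with orthogonality of $\{Q^{\circ}_m\}$ shows that this transition operator acts diagonally on the orthonormal Jacobi basis with eigenvalues $\sigma_{|m|}$. Assembling the resulting factors over $|m|=|n|$ through the kernel definition $Q^{\alpha}_{|n|}(x,y) = \sum_{|m|=|n|} Q^{\circ}_m(x) Q^{\circ}_m(y)$ and using independence of $(X,R)$ and $(Y,S)$ yields the displayed identity.

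The claim about (\ref{hpd}) then follows at once: the HPDS hypothesis $\sum_{|n|}\rho_{|n|} H^{\alpha}_{|n|}(r,s) \geq 0$ for all $r,s \in N\Delta_{(d-1)}$ is preserved by conditional expectation, so
$$0 \;\leq\; \mbb{E}\Bigl[\sum_{|n|}\rho_{|n|} H^{\alpha}_{|n|}(R,S) \,\Big|\, X=x,\, Y=y\Bigr] \;=\; \sum_{|n|}\rho_{|n|}\,\sigma_{|n|}\, Q^{\alpha}_{|n|}(x,y),$$
which is precisely $\alpha$-JPDS for the sequence in (\ref{hpd}). For the claim that $\{\rho_{|n|}\}$ itself is $\alpha$-JPDS, the natural reading is that HPDS should hold for every $N$; since $\sigma^{(N)}_{|n|} \to 1$ as $N\to\infty$ with $|n|$ fixed, dominated convergence applied to the Jacobi expansion passes the pointwise non-negativity through the limit. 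Alternatively, one can invoke Proposition \ref{PROP:1} to reduce JPDS on $\Delta_{(d-1)}$ to positivity of $\sum_{|n|}\rho_{|n|} Q^{\alpha_d,|\alpha|-\alpha_d}_{|n|}(z,1)$ on $[0,1]$ and then appeal to the univariate Bochner--Gasper characterization in Theorem \ref{th:2}, where the Gasper conditions on $\alpha$ become essential.

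The main obstacle is the eigenfunction identity for the transition operator $T_{\sigma}$: its cleanest derivation uses the de Finetti description of $(X,X')$ as an exchangeable pair with de Finetti measure $DM_{\alpha}(\cdot;N)$, or equivalently the fact that $\xi^{\alpha}_N$ is the density, with respect to $D_{\alpha}\otimes D_{\alpha}$, of this joint law; once this is in place, both halves of Proposition \ref{jhpd} reduce to monotonicity of conditional expectation (for the weighted version) and a limiting argument (for the unweighted sequence, under the strengthened HPDS-for-every-$N$ reading).
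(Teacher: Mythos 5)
Your conditional-expectation identity $\mbb{E}\bigl[H^{\alpha}_{|n|}(R,S)\mid X=x,Y=y\bigr]=\sigma_{|n|}Q^{\alpha}_{|n|}(x,y)$ is correct (it follows from (\ref{mdd1}) and (\ref{maineq}) exactly as you sketch), and it gives a clean, direct proof that the weighted sequence (\ref{hpd}) is $\alpha$-JPDS --- a genuinely different route from the paper, which instead obtains that claim as a corollary of the first claim together with the observation that $N_{[|n|]}/(|\alpha|+N)_{(|n|)}$ is itself JPDS. Your argument for this half is arguably cleaner and does not even use the Gasper-type conditions on $\alpha$.

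The gap is in the first claim, that $\{\rho_{|n|}\}$ itself is $\alpha$-JPDS. The hypothesis is HPDS for \emph{some} integer $N$, so your limiting argument (which needs HPDS for every $N$, plus a justification for exchanging the $N\to\infty$ limit with the infinite Jacobi expansion) proves a different statement. Your fallback --- reduce to $d=2$ via Proposition \ref{PROP:1} and ``appeal to'' Theorem \ref{th:2} --- is not an argument yet: the nontrivial direction of Theorem \ref{th:2} requires you to exhibit a positive measure $\nu$ with $\rho_{|n|}=\int R^{\alpha,\beta}_{|n|}\,d\nu$, and you never produce one from the single-$N$ Hahn positivity. The paper's proof supplies exactly this missing construction: project the Hahn positivity onto one coordinate via Proposition \ref{l:chu} to get a nonnegative function $f_N$ on $\{0,\dots,N\}$ with $\rho_{|n|}=\mbb{E}[f_N(R)\wt{h}^{\alpha_1,|\alpha|-\alpha_1}_{|n|}(R;N)]$, then use the Bayes relation $DM_{\alpha}(r;N)D_{\alpha+r}(dx)={N\choose r}x^r(1-x)^{N-r}D_{\alpha}(dx)$ to rewrite this as $\rho_{|n|}=\int_0^1\phi_N(x)R_{|n|}(x)D_{\alpha_1,|\alpha|-\alpha_1}(dx)$ with $\phi_N(x)=\sum_r{N\choose r}x^r(1-x)^{N-r}f_N(r)\geq 0$; Theorem \ref{th:2} (where the Gasper conditions enter) then gives positive-definiteness for the marginal Beta, and Proposition \ref{pdprop} lifts it to $D_{\alpha}$. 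Without some version of this Bernstein-mixture step, your proof of the first claim does not go through under the stated hypothesis.
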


\begin{pf}
If
\[
\sum_{ n=0}^{\infty}\rho_{ n}H^{\alpha}_{ n}(\mb{r},\mb{s})\geq
0
\]
for every $\mb{r},\mb{s}\in N\Delta_{(d-1)},$ then Proposition \ref
{l:chu} implies that
\[
\sum_{ n=0}^{\infty}\rho_{ n}
{H}^{\alpha_1,|\alpha|-\alpha_1}_{ n}(r_1,N)\geq0.
\]
Now consider the Hahn polynomials re-normalized so that
\[
\wt{h}^{\alpha_1,|\alpha|-\alpha_1}_{ n}(r;N)=\int_0^1\frac
{Q^{\alpha
_1,|\alpha|-\alpha_1}_{ n}(x,1)}{Q^{\alpha_1,|\alpha|-\alpha_1}_{
n}(1,1)}D_{\alpha+{r}}(\mathrm{d}{x}).
\]
Then it is easy to prove that
\[
\wt{h}^{\alpha_1,|\alpha|-\alpha_1}_{ n}(N;N)=1
\]
and
\[
\mbb{E}\bigl[\wt{h}^{\alpha_1,|\alpha|-\alpha_1}_{ n}(R;N)\bigr]^2=
\frac{N_{[ n]}}{\ifact{(|\alpha|+N)}{ n}}\frac{1}{\zeta_{
n}^{\alpha
_1,|\alpha|-\alpha_1}},\qquad  n=0,1,\ldots
\]
(see also~\cite{GS08}, (5.65)).
Hence
%
%$${H}^{\alpha_1,|\alpha|-\alpha_1}_{ n}(l,k)=\omega^{\alpha_1,|\alpha|-
%
\begin{eqnarray*}
0&\leq&\sum_{ n=0}^{\infty}\rho_{ n}{H}^{\alpha_1,|\alpha|-\alpha_1}_{
n}(r_1,N) \\
&=&\sum_{ n=0}^{\infty}\rho_{ n}\frac{\ifact{(|\alpha|+N)}{ n}}{N_{[
n]}}{\zeta_{ n}^{\alpha_1,|\alpha|-\alpha_1}}\wt{h}^{\alpha
_1,|\alpha
|-\alpha_1}_{ n}(r_1;N)=:f_N(r)
.\label{app1}
\end{eqnarray*}
So, for every $ n,$
%
%
%e10.9 ###
%e10.8 ###
\begin{eqnarray}
\rho_{ n}&=&\mbb{E}\bigl[f_N(R)\wt{h}^{\alpha_1,|\alpha|-\alpha_1}_{
n}(R;N)\bigr]
\nonumber
\\[-8pt]
\\[-8pt]
\nonumber
&=&\int_0^1 \phi_N(x)R^{}_{ n}(x)D_\alpha(\mathrm{d}x),
\end{eqnarray}
where
\[
\phi_N(x)=\sum_{r=0}^{N}\pmatrix{N\cr{r}}x^r(1-x)^{N-r}f_N(r)\geq0,
\]
and hence, by Gasper's theorem (Theorem~\ref{th:2}), $\rho_{ n}$ is
($\alpha_1,|\alpha|-\alpha_1$)-JPDS. Therefore, by Proposition \ref
{pdprop}, it is also $\alpha$-JPDS.
Finally, from the form of $\xi^{\alpha}_{ m}$, we know that
\[
{ r_{[ n]}}/{\ifact{(|\alpha|+ r)}{ n}}=\wh{\xi^{\alpha}_{ N}}(n)
\]
is $\alpha$-JPDS; thus (\ref{hpd}) is JPDS.
\end{pf}

\begin{remark}
\label{rm2} Notice that
\[
\frac{ r_{[ n]}}{\ifact{(|\alpha|+ r)}{ n}}
\]
is itself a positive-definite sequence for $D_{\alpha}.$ This is easy
to see directly from the representation~(\ref{inverse}) of
$\xi_{ m}^{\alpha}$ (we will consider more of it in Section {\ref
{sec:prob}}).

Since products of positive-definite sequences are positive
definite-sequences, then we have, as a completion to all previous results,

\begin{corollary}
{If $\{\rho_{ n}\}$ is positive-definite for $D_{\alpha}$, then (\ref
{hpd}) is positive-definite for both $D_{\alpha}$ and
$\operatorname{DM}_{\alpha}$.}
\end{corollary}
\end{remark}

%s10.1 ###
\subsection{From Jacobi to Hahn positive-definite sequences via
discrete distributions}\label{sec:p2}
We have seen in Proposition~\ref{jhpd} that Jacobi positive-definite
sequences $\{\rho_{ n}\}$ can always be mapped to Hahn
positive-definite sequences of the form $\{\rho_{ n}\frac{N_{[
n]}}{(|\alpha|+N)_{( n)}}\}.$ We now show that a JPDS $\{\rho_{ n}\}$
is also HPDS when it is the image, via (\ref{gspd}), of a particular
class of discrete probability measures.
\begin{proposition}\label{prp:p2}
For every $N$ and $|\theta|>0$, let $\rho^{(N)}=\{\rho^{(N)}_{ n}\dvt
n\in
\mbb{Z}_{+}\}$ be of the same form (\ref{gspd}) for a probability mass
function $d^{(N)}=\{d_{ m}\dvt m\in\mbb{Z}_{+}\},$ such that $d_ l=0$ for
every $ l>N.$ Then $\rho^{(N)}$ is $\wt{\alpha}$-JPDS if and only if it
is $\wt{\alpha}$-HPDS for every $d$ and ${\alpha}\in\mbb{R}^d_+$, such
that $|{\alpha}|=|\theta|.$
\end{proposition}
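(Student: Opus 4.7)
The forward implication ($\wt{\alpha}$-JPDS $\Rightarrow$ $\wt{\alpha}$-HPDS) is the entire content of the proposition; the converse is automatic, since Proposition \ref{pr:gspd} guarantees that every $\rho^{(N)}$ produced from a pmf $d^{(N)}$ through (\ref{gspd}) is $\wt{\alpha}$-JPDS for every $\wt{\alpha}$ with $|\wt{\alpha}|=|\theta|$, irrespective of any Hahn positivity. My plan is therefore to prove the HPDS direction by exhibiting $\rho^{(N)}$ as a nonnegative mixture of sequences already known to be HPDS.

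The key building blocks will come from Lemma \ref{l:hpd}, which itself rests on the nonnegativity of $\chi^{H,\wt{\alpha}}_{|m|}(r,s)$ (immediate from the explicit form (\ref{hatxi2})) together with the Hahn-kernel expansion of Proposition \ref{vdmk}. Specifically, for every $|m|\leq N$ and every $\wt{\alpha}\in\mbb{R}_+^d$ with $|\wt{\alpha}|=|\theta|$, that lemma tells us that $\{|m|_{[|n|]}/(|\wt{\alpha}|+|m|)_{(|n|)}\}_{|n|\in\mbb{Z}_+}$ is $\wt{\alpha}$-HPDS for $DM_{\wt{\alpha}}(\cdot\,;N)$, meaning
$$
\sum_{|n|=0}^{|m|}\frac{|m|_{[|n|]}}{(|\wt{\alpha}|+|m|)_{(|n|)}}H^{\wt{\alpha}}_{|n|}(r,s)\ \geq\ 0\qquad\text{for all } r,s\in N\Delta_{(d-1)}.
$$

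The conclusion would then be a straightforward swap of the two finite summations. Since $d^{(N)}_l=0$ for $l>N$,
\begin{eqnarray*}
\sum_{|n|=0}^{N}\rho^{(N)}_{|n|}H^{\wt{\alpha}}_{|n|}(r,s)
&=&\sum_{|n|=0}^{N}\left[\sum_{|m|=|n|}^{N}\frac{|m|_{[|n|]}}{(|\wt{\alpha}|+|m|)_{(|n|)}}d^{(N)}_{|m|}\right]H^{\wt{\alpha}}_{|n|}(r,s)\\
&=&\sum_{|m|=0}^{N}d^{(N)}_{|m|}\left[\sum_{|n|=0}^{|m|}\frac{|m|_{[|n|]}}{(|\wt{\alpha}|+|m|)_{(|n|)}}H^{\wt{\alpha}}_{|n|}(r,s)\right],
\end{eqnarray*}
which is a nonnegative combination (weights $d^{(N)}_{|m|}\geq 0$) of nonnegative quantities (inner brackets $\geq 0$ by Lemma \ref{l:hpd}), hence $\geq 0$. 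This gives the $\wt{\alpha}$-HPDS property for $DM_{\wt{\alpha}}(\cdot\,;N)$; the same argument works verbatim for any $N'\geq N$ in place of $N$, since the constraint $|m|\leq N'$ is automatic from $|m|\leq N\leq N'$, and this is the natural range of $|r|$ over which the statement is to be read.

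I do not anticipate any serious obstacle once Lemma \ref{l:hpd} is in place: the positivity of each inner bracket is exactly what that lemma delivers, and everything else is elementary bookkeeping with a pmf. The substantive work has already been absorbed upstream into Proposition \ref{vdmk}, where the nonnegativity of $\chi^{H,\wt{\alpha}}_{|m|}$ and its clean expansion in the Hahn kernel basis were established; that proposition is the true vehicle converting the probabilistic representation of $\rho^{(N)}$ (the mixing pmf $d^{(N)}$) into joint Jacobi and Hahn positive definiteness, i.e., into (P2) for this family.
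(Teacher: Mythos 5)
Your proposal is correct and follows essentially the same route as the paper: the authors likewise invoke Lemma \ref{l:hpd} to get nonnegativity of $\sum_{|n|\leq |m|}\frac{|m|_{[|n|]}}{(|\alpha|+|m|)_{(|n|)}}H^{\alpha}_{|n|}$ for each fixed $|m|\leq N$, then average over $|m|$ with the pmf $d^{(N)}$ (phrased as replacing $|m|$ by a random $M$ and exchanging expectation with the finite sum, which is exactly your swap of summations), and dispose of the Jacobi direction via Proposition \ref{pr:gspd}.
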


\begin{pf}
By Lemma~\ref{l:hpd}, the sequence
\[
\biggl\{\frac{ m_{[ n]}}{\ifact{(|\alpha|+m)}{ n}}\biggr\}
\]
is HPDS (to convince oneself, take $\wt{\rho}$ as in Example \ref
{ex:gen} or in Example~\ref{ex:pf}, and take the limit as
$t\rightarrow
0$ or $z\rightarrow1$, resp.).

Now replace $ m$ with a random $M$ with distribution given by
$d^{(N)}.$ Then
\begin{eqnarray*}
0&\leq& \mbb{E}\Biggl[\sum_{ n=0}^{ m}\frac{ M_{[ n]}}{\ifact{(|\alpha
|+M)}{ n}}H_{ n}^{\alpha}\Biggr] \\
&=&\sum_{ n=0}^{N}\Biggl(\sum_{ m= n}^{N}d^{(N)}_{ m}\frac{ M_{[
n]}}{\ifact{(|\alpha|+M)}{ n}}\Biggr)H_{ n}^{\alpha},
\end{eqnarray*}
which proves the ``Hahn'' part of the claim. The ``Jacobi'' part is obviously proved by Proposition~\ref{gspd}.
\end{pf}

\section*{Acknowledgements}
Dario Span\`{o}'s research is partly supported by CRiSM, an
EPSRC/HEFCE-funded grant.
Part of the material included in this
paper (especially the first part) has been informally circulating for
quite a while, in form of notes, among other Authors. Some of them have
also used it for several interesting applications in statistics and
probability (see~\cite{DKSC08,KZ09}). Here we wish to thank
them for their helpful comments.

%
% imsref loaded by akundreckaite, 2012-01-06 10:42:15
%
\begin{thebibliography}{40}
% BibTex style file: bej.bst, 2011-10-13
% Default style options (sort=1,type=number).
% Used options (sort=1,type=number).

%b1 ###
\bibitem{AS}
\begin{bbook}[mr]
\bauthor{\bsnm{Abramowitz},~\bfnm{Milton}\binits{M.}} \AND
\bauthor{\bsnm{Stegun},~\bfnm{Irene~A.}\binits{I.A.}}
(\byear{1964}).
\btitle{Handbook of Mathematical Functions with Formulas, Graphs, and
Mathematical Tables}.
\bseries{National Bureau of Standards Applied Mathematics Series}
\bvolume{55}.
\baddress{New York}: \bpublisher{Dover}.
\bid{mr={0167642}}
\bptok{imsref}%
\end{bbook}
\endbibitem

%b2 ###
\bibitem{AAR99}
\begin{bbook}[mr]
\bauthor{\bsnm{Andrews},~\bfnm{George~E.}\binits{G.E.}},
\bauthor{\bsnm{Askey},~\bfnm{Richard}\binits{R.}} \AND
\bauthor{\bsnm{Roy},~\bfnm{Ranjan}\binits{R.}}
(\byear{1999}).
\btitle{Special Functions}.
\bseries{Encyclopedia of Mathematics and Its Applications}
\bvolume{71}.
\baddress{Cambridge}: \bpublisher{Cambridge Univ. Press}.
\bid{mr={1688958}}
\bptok{imsref}%
\end{bbook}
\endbibitem

%%b3 ###
%%
%(\byear{2003}).
%%

%b4 ###
\bibitem{B54}
\begin{barticle}[mr]
\bauthor{\bsnm{Bochner},~\bfnm{S.}\binits{S.}}
(\byear{1954}).
\btitle{Positive zonal functions on spheres}.
\bjournal{Proc. Nat. Acad. Sci. U.S.A.}
\bvolume{40}
\bpages{1141--1147}.
\bid{issn={0027-8424}, mr={0068127}}
\bptok{imsref}%
\end{barticle}
\endbibitem
%
%%b5 ###
%%
%(\byear{1953}).
%%

%b6 ###
\bibitem{DKSC08}
\begin{barticle}[mr]
\bauthor{\bsnm{Diaconis},~\bfnm{Persi}\binits{P.}},
\bauthor{\bsnm{Khare},~\bfnm{Kshitij}\binits{K.}} \AND
\bauthor{\bsnm{Saloff-Coste},~\bfnm{Laurent}\binits{L.}}
(\byear{2008}).
\btitle{Gibbs sampling, exponential families and orthogonal polynomials (with comments and a rejoinder by the authors)}.
\bjournal{Statist. Sci.}
\bvolume{23}
\bpages{151--178}.
\bid{doi={10.1214/07-STS252}, issn={0883-4237}, mr={2446500}}
\bptok{imsref}%
\end{barticle}
\endbibitem

%%b7 ###
%%
%(\byear{1974}).
%distributions}.
%%

%b8 ###
\bibitem{DX02}
\begin{bbook}[mr]
\bauthor{\bsnm{Dunkl},~\bfnm{Charles~F.}\binits{C.F.}} \AND
\bauthor{\bsnm{Xu},~\bfnm{Yuan}\binits{Y.}}
(\byear{2001}).
\btitle{Orthogonal Polynomials of Several Variables}.
\bseries{Encyclopedia of Mathematics and Its Applications}
\bvolume{81}.
\baddress{Cambridge}: \bpublisher{Cambridge Univ. Press}.
\bid{doi={10.1017/CBO9780511565717}, mr={1827871}}
\bptok{imsref}%
\end{bbook}
\endbibitem

%b9 ###
\bibitem{E64}
\begin{barticle}[mr]
\bauthor{\bsnm{Eagleson},~\bfnm{G.~K.}\binits{G.K.}}
(\byear{1964}).
\btitle{Polynomial expansions of bivariate distributions}.
\bjournal{Ann. Math. Statist.}
\bvolume{35}
\bpages{1208--1215}.
\bid{issn={0003-4851}, mr={0168055}}
\bptok{imsref}%
\end{barticle}
\endbibitem

%b10 ###
\bibitem{EL67}
\begin{barticle}[mr]
\bauthor{\bsnm{Eagleson},~\bfnm{G.~K.}\binits{G.K.}} \AND
\bauthor{\bsnm{Lancaster},~\bfnm{H.~O.}\binits{H.O.}}
(\byear{1967}).
\btitle{The regression system of sums with random elements in common}.
\bjournal{Austral. J. Statist.}
\bvolume{9}
\bpages{119--125}.
\bid{issn={0004-9581}, mr={0226761}}
\bptok{imsref}%
\end{barticle}
\endbibitem

%%b11 ###
%%
%(\byear{1938}).
%%
%
%%b12 ###
%%
%(\byear{1973}).
%%

%b13 ###
\bibitem{GAS72}
\begin{barticle}[mr]
\bauthor{\bsnm{Gasper},~\bfnm{George}\binits{G.}}
(\byear{1972}).
\btitle{Banach algebras for {J}acobi series and positivity of a kernel}.
\bjournal{Ann. of Math. (2)}
\bvolume{95}
\bpages{261--280}.
\bid{issn={0003-486X}, mr={0310536}}
\bptok{imsref}%
\end{barticle}
\endbibitem

%b14 ###
\bibitem{GAS73}
\begin{barticle}[mr]
\bauthor{\bsnm{Gasper},~\bfnm{George}\binits{G.}}
(\byear{1973}).
\btitle{Nonnegativity of a discrete {P}oisson kernel for the {H}ahn
polynomials}.
\bjournal{J. Math. Anal. Appl.}
\bvolume{42}
\bpages{438--451}.
\bnote{Collection of articles dedicated to Salomon Bochner}.
\bid{issn={0022-247X}, mr={0320392}}
\bptok{imsref}%
\end{barticle}
\endbibitem

%b15 ###
\bibitem{G79}
\begin{barticle}[mr]
\bauthor{\bsnm{Griffiths},~\bfnm{R.~C.}\binits{R.C.}}
(\byear{1979}).
\btitle{On the distribution of allele frequencies in a diffusion model}.
\bjournal{Theoret. Population Biol.}
\bvolume{15}
\bpages{140--158}.
\bid{issn={0040-5809}, mr={0528914}}
\bptok{imsref}%
\end{barticle}\vadjust{\goodbreak}
\endbibitem

%b16 ###
\bibitem{G79a}
\begin{barticle}[mr]
\bauthor{\bsnm{Griffiths},~\bfnm{R.~C}\binits{R.C.}}
(\byear{1979}).
\btitle{A transition density expansion for a multi-allele diffusion model}.
\bjournal{Adv. in Appl. Probab.}
\bvolume{11}
\bpages{310--325}.
\bid{doi={10.2307/1426842}, issn={0001-8678}, mr={0526415}}
\bptok{imsref}%
\end{barticle}
\endbibitem

%b17 ###
\bibitem{G81}
\begin{barticle}[mr]
\bauthor{\bsnm{Griffiths},~\bfnm{R.~C.}\binits{R.C.}}
(\byear{1980}).
\btitle{Lines of descent in the diffusion approximation of neutral
{W}right--{F}isher models}.
\bjournal{Theoret. Population Biol.}
\bvolume{17}
\bpages{37--50}.
\bid{issn={0040-5809}, mr={0568666}}
\bptok{imsref}%
\end{barticle}
\endbibitem

%b18 ###
\bibitem{G06}
\begin{barticle}[mr]
\bauthor{\bsnm{Griffiths},~\bfnm{Robert~C.}\binits{R.C.}}
(\byear{2006}).
\btitle{Coalescent lineage distributions}.
\bjournal{Adv. in Appl. Probab.}
\bvolume{38}
\bpages{405--429}.
\bid{doi={10.1239/aap/1151337077}, issn={0001-8678}, mr={2264950}}
\bptok{imsref}%
\end{barticle}
\endbibitem

%b19 ###
\bibitem{GS10}
\begin{bincollection}[mr]
\bauthor{\bsnm{Griffiths},~\bfnm{Robert~C.}\binits{R.C.}} \AND
\bauthor{\bsnm{Span{\'o}},~\bfnm{Dario}\binits{D.}}
(\byear{2010}).
\btitle{Diffusion processes and coalescent trees}.
In \bbooktitle{Probability and Mathematical Genetics}.
\bseries{London Mathematical Society Lecture Note Series}
\bvolume{378}
\bpages{358--379}.
\baddress{Cambridge}: \bpublisher{Cambridge Univ. Press}.
\bid{mr={2744247}}
\bptok{imsref}%
\end{bincollection}
\endbibitem

%b20 ###
\bibitem{GS08}
\begin{barticle}[auto:STB|2012/01/05|16:28:07]
\bauthor{\bsnm{Griffiths},~\bfnm{R.~C.}\binits{R.C.}} \AND
\bauthor{\bsnm{Span{\`o}},~\bfnm{D.}\binits{D.}}
(\byear{2011}).
\btitle{Multivariate Jacobi and Laguerre polynomials, infinite-dimensional
extensions, and their probabilistic connections with multivariate Hahn and
Meixner polynomials}.
\bjournal{Bernoulli}
\bvolume{17}
\bpages{1095--1125}.
\bptok{imsref}%
\end{barticle}
\endbibitem

%b21 ###
\bibitem{I05}
\begin{bbook}[mr]
\bauthor{\bsnm{Ismail},~\bfnm{Mourad E.~H.}\binits{M.E.H.}}
(\byear{2005}).
\btitle{Classical and Quantum Orthogonal Polynomials in One Variable}.
\bseries{Encyclopedia of Mathematics and Its Applications}
\bvolume{98}.
\baddress{Cambridge}: \bpublisher{Cambridge Univ. Press}.
\bnote{With two chapters by Walter Van Assche, with a foreword by
Richard A.
Askey}.
\bid{mr={2191786}}
\bptok{imsref}%
\end{bbook}
\endbibitem

%b22 ###
\bibitem{KMG75}
\begin{bincollection}[mr]
\bauthor{\bsnm{Karlin},~\bfnm{S.}\binits{S.}} \AND
\bauthor{\bsnm{McGregor},~\bfnm{J.}\binits{J.}}
(\byear{1975}).
\btitle{Linear growth models with many types and multidimensional {H}ahn
polynomials}.
In \bbooktitle{Theory and Application of Special Functions ({P}roc. {A}dvanced
{S}em., {M}ath. {R}es. {C}enter, {U}niv. {W}isconsin, {M}adison, {W}is.,
1975)}
\bpages{261--288}.
\baddress{New York}: \bpublisher{Academic Press}.
\bid{mr={0406574}}
\bptok{imsref}%
\end{bincollection}
\endbibitem
%
%%b23 ###
%%
%(\byear{1961}).
%%

%b24 ###
\bibitem{KZ09}
\begin{barticle}[mr]
\bauthor{\bsnm{Khare},~\bfnm{Kshitij}\binits{K.}} \AND
\bauthor{\bsnm{Zhou},~\bfnm{Hua}\binits{H.}}
(\byear{2009}).
\btitle{Rates of convergence of some multivariate {M}arkov chains with
polynomial eigenfunctions}.
\bjournal{Ann. Appl. Probab.}
\bvolume{19}
\bpages{737--777}.
\bid{doi={10.1214/08-AAP562}, issn={1050-5164}, mr={2521887}}
\bptok{imsref}%
\end{barticle}
\endbibitem

%%b25 ###
%%
%(\byear{1977}).
%%

%b26 ###
\bibitem{K75}
\begin{barticle}[mr]
\bauthor{\bsnm{Kingman},~\bfnm{J.~F.~C.}\binits{J.F.C.}},
\bauthor{\bsnm{Taylor},~\bfnm{S.~J.}\binits{S.J.}},
\bauthor{\bsnm{Hawkes},~\bfnm{A.~G.}\binits{A.G.}},
\bauthor{\bsnm{Walker},~\bfnm{A.~M.}\binits{A.M.}},
\bauthor{\bsnm{Cox},~\bfnm{David~Roxbee}\binits{D.R.}},
\bauthor{\bsnm{Smith},~\bfnm{A.~F.~M.}\binits{A.F.M.}},
\bauthor{\bsnm{Hill},~\bfnm{B.~M.}\binits{B.M.}},
\bauthor{\bsnm{Burville},~\bfnm{P.~J.}\binits{P.J.}} \AND
\bauthor{\bsnm{Leonard},~\bfnm{T.}\binits{T.}}
(\byear{1975}).
\btitle{Random discrete distribution}.
\bjournal{J. Roy. Statist. Soc. Ser. B}
\bvolume{37}
\bpages{1--22}.
\bnote{With a discussion by S.J. Taylor, A.G. Hawkes, A.M. Walker,
D.R.
Cox, A.F.M. Smith, B.M. Hill, P.J. Burville, T. Leonard and a reply by
the author}.
\bid{issn={0035-9246}, mr={0368264}}
\bptok{imsref}%
\end{barticle}
\endbibitem

%b27 ###
\bibitem{Koo74}
\begin{barticle}[mr]
\bauthor{\bsnm{Koornwinder},~\bfnm{Tom}\binits{T.}}
(\byear{1974}).
\btitle{Jacobi polynomials. {II}. {A}n analytic proof of the product formula}.
\bjournal{SIAM J. Math. Anal.}
\bvolume{5}
\bpages{125--137}.
\bid{issn={0036-1410}, mr={0385198}}
\bptok{imsref}%
\end{barticle}
\endbibitem

%%b28 ###
%%
%(\byear{1975}).
%In \bbooktitle{Theory and Application of Special Functions ({P}roc. {A}dvanced
%{S}em., {M}ath. {R}es. {C}enter, {U}niv. {W}isconsin, {M}adison, {W}is.,
%1975)}
%%

%b29 ###
\bibitem{KS91}
\begin{barticle}[mr]
\bauthor{\bsnm{Koornwinder},~\bfnm{T.~H.}\binits{T.H.}} \AND
\bauthor{\bsnm{Schwartz},~\bfnm{A.~L.}\binits{A.L.}}
(\byear{1997}).
\btitle{Product formulas and associated hypergroups for orthogonal polynomials
on the simplex and on a parabolic biangle}.
\bjournal{Constr. Approx.}
\bvolume{13}
\bpages{537--567}.
\bid{doi={10.1007/s003659900058}, issn={0176-4276}, mr={1466065}}
\bptok{imsref}%
\end{barticle}
\endbibitem

%%b30 ###
%%
%(\byear{1996}).
%%

%b31 ###
\bibitem{L58}
\begin{barticle}[mr]
\bauthor{\bsnm{Lancaster},~\bfnm{H.~O.}\binits{H.O.}}
(\byear{1958}).
\btitle{The structure of bivariate distributions}.
\bjournal{Ann. Math. Statist.}
\bvolume{29}
\bpages{719--736}.
\bid{issn={0003-4851}, mr={0102150}}
\bptok{imsref}%
\end{barticle}
\endbibitem

%%b32 ###
%%
%(\byear{1893}).
%%

%b33 ###
\bibitem{WSM05}
\begin{barticle}[mr]
\bauthor{\bsnm{Muliere},~\bfnm{Pietro}\binits{P.}},
\bauthor{\bsnm{Secchi},~\bfnm{Piercesare}\binits{P.}} \AND
\bauthor{\bsnm{Walker},~\bfnm{Stephen}\binits{S.}}
(\byear{2005}).
\btitle{Partially exchangeable processes indexed by the vertices of a
{$k$}-tree constructed via reinforcement}.
\bjournal{Stochastic Process. Appl.}
\bvolume{115}
\bpages{661--677}.
\bid{doi={10.1016/j.spa.2004.11.004}, issn={0304-4149}, mr={2128635}}
\bptok{imsref}%
\end{barticle}
\endbibitem

%b34 ###
\bibitem{P08}
\begin{barticle}[mr]
\bauthor{\bsnm{Peccati},~\bfnm{Giovanni}\binits{G.}}
(\byear{2008}).
\btitle{Multiple integral representation for functionals of {D}irichlet
processes}.
\bjournal{Bernoulli}
\bvolume{14}
\bpages{91--124}.
\bid{doi={10.3150/07-BEJ5169}, issn={1350-7265}, mr={2401655}}
\bptok{imsref}%
\end{barticle}
\endbibitem

%%b35 ###
%%
%(\byear{2006}).
%Saint-Flour, July 7--24, 2002, With a foreword by Jean Picard}.
%%

%b36 ###
\bibitem{R99}
\begin{barticle}[mr]
\bauthor{\bsnm{Rosengren},~\bfnm{Hjalmar}\binits{H.}}
(\byear{1999}).
\btitle{Multivariable orthogonal polynomials and coupling coefficients for
discrete series representations}.
\bjournal{SIAM J. Math. Anal.}
\bvolume{30}
\bpages{232--272 (electronic)}.
\bid{doi={10.1137/S003614109732568X}, issn={0036-1410}, mr={1664759}}
\bptok{imsref}%
\end{barticle}
\endbibitem

%%b37 ###
%%
%(\byear{2007}).
%%

%b38 ###
\bibitem{W06}
\begin{barticle}[mr]
\bauthor{\bsnm{Waldron},~\bfnm{Shayne}\binits{S.}}
(\byear{2006}).
\btitle{On the {B}ernstein--{B}\'ezier form of {J}acobi polynomials on a
simplex}.
\bjournal{J. Approx. Theory}
\bvolume{140}
\bpages{86--99}.
\bid{doi={10.1016/j.jat.2005.10.006}, issn={0021-9045}, mr={2226679}}
\bptok{imsref}%
\end{barticle}
\endbibitem

%b39 ###
\bibitem{W84}
\begin{barticle}[mr]
\bauthor{\bsnm{Watterson},~\bfnm{G.~A.}\binits{G.A.}}
(\bye